\documentclass[11pt]{amsart}
\usepackage{graphicx}
\usepackage[pagebackref, colorlinks=true, linkcolor=black, citecolor=black, urlcolor=black]{hyperref}

\usepackage{xargs}                      
\usepackage[colorinlistoftodos,prependcaption 
]{todonotes}                                                                                                                 

\newcommandx{\todoN}[2][1=]{\todo[linecolor=red,backgroundcolor=white,bordercolor=red,#1]{#2}}			

\usepackage{comment}
\usepackage{calc}

\usepackage{amsmath, amsthm, amsfonts, amssymb}
\usepackage{enumerate}
\usepackage{bbm}
\usepackage{mathrsfs}
\usepackage{amssymb}
\usepackage{mathtools}
\usepackage[all]{xy}
\usepackage{tikz}
\usepackage{tikz-cd}
\usepackage{adjustbox}

\usepackage{caption}
\usetikzlibrary{matrix,arrows,decorations.pathmorphing}
\usepackage{xcolor}
\usepackage{bm}
	\usetikzlibrary{calc}
	\usetikzlibrary{trees}
	\tikzstyle{every picture}=[scale=.35,inner sep=0]
	\usepackage{color}

\usepackage{stmaryrd} 

\usepackage[normalem]{ulem} 

\newtheorem{thm}{Theorem}

\theoremstyle{definition}

\theoremstyle{theorem}
\newtheorem{theorem}{Theorem}[section]
\newtheorem{lemma}[theorem]{Lemma}

\newtheorem{corollary}[theorem]{Corollary}

\theoremstyle{definition}

\theoremstyle{remark}
\newtheorem{remark}[theorem]{Remark}

\newtheorem{example}[theorem]{Example}

\numberwithin{equation}{section}

\makeatletter
\@namedef{subjclassname@2020}{\textup{MSC2020}}
\makeatother

\begin{document}

\title[Heisenberg vertex algebras and abelian varieties]{Heisenberg vertex algebras\\ and abelian varieties}

\author[N.~Tarasca]{Nicola Tarasca}
\address{Nicola Tarasca 
\newline \indent Department of Mathematics \& Applied Mathematics
\newline \indent Virginia Commonwealth University, Richmond, VA 23284}
\email{tarascan@vcu.edu}

\subjclass[2020]{
14K10,  	
17B65,  	
17B66,  	
17B69,  	
14F06      
(primary), 
14K25,  	
14L15  	
(secondary)}
\keywords{Heisenberg vertex algebras, abelian varieties and their moduli, the Torelli map, coinvariants and conformal blocks, twisted $\mathscr{D}$-modules.}

\begin{abstract}
Here we construct spaces of coinvariants for Heisenberg vertex algebras on abelian varieties and show that these globalize to twisted $\mathscr{D}$-modules on the moduli space of abelian varieties. Remarkably, we recover the standard construction of these sheaves over the Torelli locus. As an example, in the case of commutative Heisenberg vertex algebras we show that the realization of coinvariants on curves as polynomial functions on one-forms extends to arbitrary abelian varieties.
\end{abstract}

\vspace*{-1.5pc}

\maketitle

\vspace{-1.5pc}

\section*{Introduction}

Vertex operator algebras (VOAs) have a natural geometric expression on algebraic curves via \textit{spaces of coinvariants.} 
Specifically, for a VOA $V$, these spaces are the quotients of $V$ by the action of \textit{Lie algebras of outgoing states} determined by algebraic curves. Such constructions globalize to twisted $\mathscr{D}$-modules on the moduli space of curves $\mathcal{M}_g$ \cite{tuy, bfm, bd, bzf}.

As the Torelli theorem places $\mathcal{M}_g$ inside the broader moduli space $\mathcal{A}_g$ of \textit{principally polarized abelian varieties} (PPAVs), a natural question arises: are spaces of coinvariants intrinsic only to curves, or are they the restriction to the Torelli locus of a more general construction valid for all PPAVs?
In this work, we show that for \textit{Heisenberg vertex algebras,} the spaces of coinvariants can indeed be defined for arbitrary PPAVs, naturally recovering the standard construction over the Torelli locus.

Our approach applies the Arbarello--De Concini moduli space $\widehat{\mathcal{A}}_g$ of \textit{extended PPAVs} \cite{adc91}.
For a Heisenberg vertex algebra $V$ and an extended PPAV $a$, we define a space of coinvariants $\widehat{\mathbb{V}}(a,V)$ as the quotient of $V$ by the action of a Lie algebra of outgoing states determined by both $a$ and $V$ --- see \eqref{eq:VhataV}.
We then extend the classical curve-based framework to this broader PPAV setting, and thus obtain the following.
Let $\Lambda$ be the Hodge line bundle on $\mathcal{A}_g$.

\begin{thm}
\label{thm:mainintroAg}
For a Heisenberg vertex algebra $V$ constructed from a positive-definite lattice $\Gamma$ of rank $d$:
\begin{enumerate}[(i)]
\item The spaces $\widehat{\mathbb{V}}(a,V)$ give rise to a quasi-coherent sheaf $\mathbb{V}(V)$ on $\mathcal{A}_g$ whose restriction to the Torelli locus recovers the standard sheaf of coinvariants on $\mathcal{M}_g$. \smallskip

\item For $g\geq 3$, the sheaf $\mathbb{V}(V)$ on $\mathcal{A}_g$ carries an action of the Atiyah algebra $\frac{d}{2}\,\mathscr{F}_\Lambda$, inducing a twisted $\mathscr{D}$-module structure.
\end{enumerate}
\end{thm}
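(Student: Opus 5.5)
We would build everything on the Arbarello--De Concini moduli space $\widehat{\mathcal{A}}_g$ of extended PPAVs, in analogy with the Beilinson--Feigin--Mazur and Ben-Zvi--Frenkel construction over $\widehat{\mathcal{M}}_g$.

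\emph{Local data.} An extended PPAV $a$ determines a $g$-dimensional subspace $W_a\subset \mathbb{C}((z))$, or rather its annihilator of outgoing states, inside the Heisenberg Lie algebra $\widehat{\mathfrak h}$ attached to $\Gamma$. Over the Torelli locus, for a pointed curve with a formal coordinate, $W_a$ should recover the image of $H^0(C\setminus p,\mathcal{O})$, or equivalently its pairing with the differentials $H^0(C,\omega_C)$. The Lie algebra of outgoing states is $\mathfrak{h}_a:=W_a\otimes\mathfrak{h}$, where $\mathfrak{h}=\Gamma\otimes\mathbb{C}$. It is isotropic for the Heisenberg cocycle $\mathrm{Res}(f\,dg)\langle\cdot,\cdot\rangle$, because the Lagrangian condition defining $\widehat{\mathcal{A}}_g$ is exactly Riemann bilinear symmetry. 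Hence $\mathfrak{h}_a$ lifts to a subalgebra of $\widehat{\mathfrak h}$ acting on $V$, and we set $\widehat{\mathbb{V}}(a,V)=V/\mathfrak{h}_a V$.

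\emph{Part (i).} First we show that the family $\mathfrak{h}_a$ forms a flat subbundle of $\widehat{\mathfrak h}\otimes\mathcal{O}_{\widehat{\mathcal A}_g}$. The quotient $V\otimes\mathcal{O}/\mathfrak{h}_\bullet(V\otimes\mathcal{O})$ is then quasi-coherent on $\widehat{\mathcal{A}}_g$. Next we check that this quotient is equivariant for the pro-unipotent and $\mathrm{Aut}\,\mathcal{O}$-type group whose quotient is $\widehat{\mathcal{A}}_g\to\mathcal{A}_g$; this uses the action of the positive part of the Virasoro algebra (or of $\mathrm{Aut}\,\mathcal{O}$) on $V$. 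Equivariance lets the sheaf descend to $\mathbb{V}(V)$ on $\mathcal{A}_g$. Finally, over the Torelli locus we identify $\mathfrak{h}_a$ with $\mathfrak{h}\otimes H^0(C\setminus p,\mathcal{O})$ modulo the part acting trivially. We must verify that the remaining functions do not change coinvariants: they act through vacuum annihilation and the zero modes, since $H^0(C\setminus p,\mathcal{O})$ modulo constants pairs with differentials of the second kind. This recovers the standard sheaf.

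\emph{Part (ii).} Following Beilinson--Schechtman and Kontsevich, the Virasoro uniformization of $\widehat{\mathcal{A}}_g$ comes from an action of a Lie algebra of vector fields $\mathrm{Der}\,\mathbb{C}((z))$ (or of a suitable quadratic subalgebra of $\widehat{\mathfrak{gl}}$ of symmetric operators) on $\widehat{\mathcal{A}}_g$ that is transitive on tangent spaces. We let its central extension act on $V$ through the Segal--Sugawara quadratic operators built from the Heisenberg fields, with central charge $d$. The stabilizer of $a$ must preserve $\mathfrak{h}_a V$, via the commutator of Sugawara operators with the fields. This yields an action of a Harish-Chandra pair, hence an Atiyah algebra action on $\mathbb{V}(V)$.

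To determine the twist, we compare the resulting cocycle with the Kontsevich/Arbarello--De Concini--Kac description of the Hodge bundle via the determinant of the Lagrangian $W_a$, i.e.\ the restriction of the Japanese cocycle. Each of the $d$ free bosons contributes $\tfrac12\mathscr{F}_\Lambda$, giving $\frac{d}{2}\mathscr{F}_\Lambda$. The restriction $g\ge3$ enters here: we need $\mathrm{Pic}(\mathcal{A}_g)\otimes\mathbb{Q}$ and $H^1$ to be controlled, so that the Atiyah algebra is determined by its class and equivariance descends without ambiguity (over $\mathcal{A}_g$ the group $\mathrm{Sp}$ is perfect).

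The main obstacle is the transitivity and stabilizer compatibility in part (ii). The quadratic Sugawara-type operators do not come from vector fields on a curve, so we must check directly that the symmetric quadratic operators preserving $W_a$ act by annihilating $\mathfrak{h}_a V$ modulo itself, and that the induced cocycle is exactly half the determinant cocycle.
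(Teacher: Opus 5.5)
Your descent step in part (i) has a genuine gap. The fibers of $\widehat{\mathcal{A}}_g\to\mathcal{A}_g$ are the spaces $\mathrm{Sp}(X,\Theta)$ of all extended PPAVs $(Z,F,L)$ over a fixed $(X,\Theta)$. These are homogeneous for $\mathbb{S}\mathrm{p}^+(H')$, whose Lie algebra $\mathfrak{sp}^+(H')$ is topologically spanned by all $b_ib_j$ with $i,j$ not both negative. They are not homogeneous for an $\mathrm{Aut}\,\mathcal{O}$-type group. The subalgebra $\mathrm{Der}_0(\mathcal{O})\subset\mathfrak{sp}^+(H')$ has tiny orbits: already for a Jacobian, $\mathrm{Aut}(C)\subset\mathrm{Sp}(X,\Theta)$ is only an $\mathrm{Aut}\,\mathcal{O}$-bundle over $C$, and a very small part of $\mathrm{Sp}(X,\Theta)$. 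So equivariance under positive Virasoro cannot produce a sheaf on $\mathcal{A}_g$. Moreover, the $\mathbb{S}\mathrm{p}^+(H')$-action is not free: the stabilizer of $a$ is $\mathbb{S}\mathrm{p}^+_F(H')$, so this is not principal-bundle descent.

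What you need is the following.
(a) Let $\mathfrak{sp}^+(H')$ act on $V$ by $b_mb_n\mapsto\sum_iH^i_mH^i_n$; this is legitimate because the cocycle defining $\mathfrak{mp}(H')$ vanishes on $\mathfrak{sp}^+(H')$.
(b) Show that the combined action on $V\otimes\mathscr{O}$ (operator on $V$ plus vector field on the base) preserves $\mathfrak{h}^{\mathrm{out}}_\Gamma(V\otimes\mathscr{O})$.
(c) Exponentiate: elements act locally nilpotently when $i+j\neq 0$ and are diagonalizable with integral eigenvalues when $i+j=0$.
(d) Show that $\mathbb{S}\mathrm{p}^+_F(H')$ acts trivially on each fiber $\widehat{\mathbb{V}}(a,V)$, so the sheaf on $[\widehat{\mathcal{A}}_g/\mathbb{S}\mathrm{p}^+(H')]$ descends further to $\mathcal{A}_g$.

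The paper obtains (b) and (d) at once from the identity $\mathfrak{h}^{\mathrm{out}}_\Gamma(V\otimes\mathscr{O})=\mathfrak{sp}^{+,\mathrm{out}}_\Gamma(V\otimes\mathscr{O})$, where $\mathfrak{sp}^{+,\mathrm{out}}_\Gamma=(\mathfrak{h}_\Gamma\otimes\mathbb{F})\widehat{\otimes}(\mathfrak{h}_\Gamma\widehat{\otimes}H'_+)$. This identity reduces (b) to equivariance of the isotropy sheaf $\mathbb{F}\widehat{\otimes}H'_+$, and gives (d) because $\mathfrak{sp}^+_F(H')$ lies in its fiber.

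Likewise, the Torelli comparison is not about ``remaining functions''. On the Torelli locus $F$ is by definition the image of $\mathscr{O}(C\setminus P)$, so $\mathfrak{h}^{\mathrm{out}}_\Gamma(a)$ coincides with $\mathfrak{h}_\Gamma\otimes\mathscr{O}(C\setminus P)$. Note also that $F$ is not $g$-dimensional; it has codimension $g$ in the Lagrangian $Z$. What must be checked is that the $\mathbb{S}\mathrm{p}^+(H')$-descent, restricted to $\mathrm{Aut}(C)\hookrightarrow\mathrm{Sp}(X,\Theta)$, reproduces the $\mathrm{Aut}\,\mathcal{O}$-descent together with propagation of vacua in $P$.

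In part (ii), the $\mathrm{Witt}$ option is unavailable, since $\mathrm{Witt}$ does not act transitively on $\widehat{\mathcal{A}}_g$. You must use $\mathfrak{mp}(H')$, acting on $V$ by the same quadratic formula with $\bm{1}\mapsto d$. More importantly, the condition you isolate is the wrong one. For the $\mathfrak{mp}(H')\widehat{\otimes}\mathscr{O}$-action on $\widehat{\mathbb{V}}(V)$ to factor through an Atiyah algebra of $\Lambda$, two things are required. All of $\mathfrak{mp}(H')\widehat{\otimes}\mathscr{O}$, including the base-derivative term, must preserve $\mathfrak{h}^{\mathrm{out}}_\Gamma(V\otimes\mathscr{O})$. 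And the kernel of the anchor map must act by zero on the quotient; merely preserving $\mathfrak{h}_aV$ is not enough for it. That kernel is $\mathfrak{sp}^{\mathrm{out}}(H')=\mathbb{F}\widehat{\otimes}H'$, with fiber $\{X : X(F^\perp)\subseteq F\}$. It is not the set of operators preserving $F$: those acting nontrivially on $F^\perp/F$ move the lattice $L$, hence the point.

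Given the exact sequence $\mathfrak{sp}^{\mathrm{out}}(H')\hookrightarrow\mathfrak{mp}(H')\widehat{\otimes}\mathscr{O}\twoheadrightarrow\mathscr{F}_\Lambda$, the twist follows directly. This sequence is where $g\geq 3$ enters, via the known action of $\mathfrak{mp}(H')$ on $\Lambda$ with $\bm{1}\mapsto 2$. Since $\bm{1}$ acts by $d$ on $V$, the twist is $\frac{d}{2}$, with no determinant-cocycle comparison needed. Finally, moving this Atiyah action from $\widehat{\mathcal{A}}_g$ to $\mathcal{A}_g$ again requires two inputs: its compatibility with the $\mathbb{S}\mathrm{p}^+(H')$-equivariant structure, and the trivial stabilizer action from (d). A Harish-Chandra localization as in the curve case does not apply directly, because $\widehat{\mathcal{A}}_g\to\mathcal{A}_g$ is not a torsor.
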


Restricting to the Torelli locus, part (ii) is compatible with the results from \cite{dgt} on the Atiyah algebra acting on sheaves of coinvariants on~$\mathcal{M}_g$.

Sheaves of coinvariants have also been constructed in Frenkel--Ben Zvi \cite[\S18.1.15]{bzf}
on the universal Jacobian $\mathcal{J}_g \rightarrow \mathcal{M}_g$ by incorporating line bundles on curves into the construction. 
Also in this context, we show that such sheaves are in fact the restriction of sheaves defined more generally on the universal abelian variety $\mathcal{X}_g \rightarrow \mathcal{A}_g$. We start by first defining a space $\widehat{\mathbb{V}}(x,V)$ for a point $x$ of the universal extended abelian variety $\widehat{\mathcal{X}}_g$ via pull-back under the map $\widehat{\mathcal{X}}_g \rightarrow \widehat{\mathcal{A}}_g$. Let $\Xi$ be the canonical line bundle on $\mathcal{X}_g$ whose restriction to the fibers of $\mathcal{X}_g\rightarrow \mathcal{A}_g$ induces twice the principal polarization on each fiber.

\begin{thm}
\label{thm:mainintroXg}
For a Heisenberg vertex algebra $V$ constructed from a positive-definite lattice $\Gamma$ of rank $d$:
\begin{enumerate}[(i)]
\item The spaces $\widehat{\mathbb{V}}(x,V)$  give rise to a sheaf $\mathbb{V}(V)$ on $\mathcal{X}_g$ which restricts to the standard sheaf of coinvariants on the universal Jacobian ${\mathcal{J}}_g$. \smallskip

\item For $g\geq 3$, the sheaf $\mathbb{V}(V)$ on $\mathcal{X}_g$ carries an action of the Atiyah algebra $-\frac{d}{2}\,\mathscr{F}_\Xi$, inducing a twisted $\mathscr{D}$-module structure.
\end{enumerate}
\end{thm}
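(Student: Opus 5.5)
Theorem~\ref{thm:mainintroXg} will be proved in parallel with Theorem~\ref{thm:mainintroAg}: first build the sheaf on the extended space $\widehat{\mathcal{X}}_g$, then descend it, then identify the Lie algebra action. Recall that the Arbarello--De Concini extended space $\widehat{\mathcal{A}}_g$ parametrizes PPAVs together with extra formal data, for instance a point of an infinite-dimensional Sato-type Grassmannian or Lagrangian subspace inside the Heisenberg symplectic space. It carries a transitive infinitesimal action of a Lie algebra of the form $\mathfrak{gl}_\infty$ or $\mathfrak{sp}_\infty$, and the stabilizer of a point $a$ is the Lie algebra of outgoing states $\mathcal{L}(a)$ used in \eqref{eq:VhataV}. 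For a point $x\in\widehat{\mathcal{X}}_g$ over $a$, the extra datum is a shift by an abelian-variety point. This shift is realized on $V$ through the zero-mode and degree-one Heisenberg generators $b_{-1}$, which act by translation/exponentials. The pulled-back Lie algebra of outgoing states is therefore enlarged by these elements, and $\widehat{\mathbb{V}}(x,V)$ is the quotient of $V$ by it.

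For part (i) I would carry out the following steps.
\begin{enumerate}[(1)]
\item Form the trivial bundle $V\otimes\mathcal{O}_{\widehat{\mathcal{X}}_g}$ and quotient it by the family version of the enlarged outgoing-states algebra. This gives a quasi-coherent sheaf $\widehat{\mathbb{V}}(V)$ on $\widehat{\mathcal{X}}_g$.
\item Observe that the fibers of $\widehat{\mathcal{X}}_g\to\mathcal{X}_g$ are orbits of a pro-unipotent group $\mathrm{Aut}_+$, which is exponentiable because it is positive in degree. This group acts on $V$ compatibly with the outgoing-states algebras, so the sheaf descends to $\mathbb{V}(V)$ on $\mathcal{X}_g$.
\item Compare with the Torelli/Krichever map from the moduli of pointed curves with formal coordinates and a line bundle with trivialization to $\widehat{\mathcal{X}}_g$. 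Under this map, the enlarged outgoing-states algebra pulls back to the Lie algebra $H\otimes\mathcal{O}(C\setminus p)$ twisted by the line bundle. This is exactly the algebra in Frenkel--Ben-Zvi \cite[\S18.1.15]{bzf}, so the restriction to $\mathcal{J}_g$ is the standard sheaf.
\end{enumerate}

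For part (ii), take the Heisenberg Sugawara construction. It gives a Virasoro representation of central charge $d$ on $V$, and together with the $b_n$ it gives an action of the extended Lie algebra $\mathrm{Vir}\ltimes\mathfrak{h}$, hence of the $\mathfrak{gl}_\infty$-type algebra acting on $\widehat{\mathcal{X}}_g$. By the Beilinson--Bernstein localization formalism (Harish-Chandra pairs), the sheaf $\widehat{\mathbb{V}}(V)$ then carries an action of the Atiyah algebra induced by the central extension. After descent this becomes an Atiyah algebra on $\mathcal{X}_g$, determined by the cocycle. It remains to identify it as $-\tfrac{d}{2}\mathscr{F}_\Xi$. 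Over $\mathcal{A}_g$ the analogous computation gives $\tfrac{d}{2}\mathscr{F}_\Lambda$, from the Virasoro cocycle at central charge $d$ combined with Mumford-type relations. Along the fiber directions, the Heisenberg cocycle on translations is the symplectic form $2\times$(polarization), and this is $c_1(\Xi)$.

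The main obstacle is this identification of the Atiyah algebra, which means showing that the Lie algebra cohomology class, or the determinant-line pullback, equals $-\tfrac d2$ times the class of $\Xi$. I would first compute it on the Torelli locus via \cite{dgt} and the determinant of cohomology on the universal Jacobian, where $\Xi$ restricts to a known combination of the theta and Hodge bundles. I would then use $g\ge3$ to extend the equality. For $g\geq3$ we have $H^1=0$ and the Picard group of $\mathcal{X}_g$ is generated by $\Lambda$ and $\Xi$, while the restriction to $\mathcal{J}_g$ is injective on these classes. Since Atiyah algebras with the same class are isomorphic when there are no nonconstant global functions, the equality of classes gives the result. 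The $g\ge3$ hypothesis is used exactly here.
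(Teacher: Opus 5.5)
There is a genuine gap, and it sits exactly where the paper does its work. You identify the algebra of outgoing states with the stabilizer of the point, and then enlarge it at $x$ by elements such as $b_{-1}$. In the paper neither happens.

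First, $\widehat{\mathbb{V}}(x,V)$ is \emph{defined} by pull-back along $\widehat{\mathcal{X}}_g\to\widehat{\mathcal{A}}_g$, so it is $V/(\mathfrak{h}_\Gamma\otimes F)(V)$. Second, the isotropy algebra of the transitive $\mathfrak{sp}(H')\ltimes H'$-action at $x$ is $\mathfrak{sp}_F(H')\ltimes F$. Its quadratic part $\mathfrak{sp}_F(H')$ is not contained in $\mathfrak{h}_\Gamma\otimes F$. Its linear part $F$ acts on $V$ through $\sum_i H^i\otimes f$, which already lies in $\mathfrak{h}_\Gamma\otimes F$.

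Adjoining $b_{-1}$ would change the quotient and break your step (3). On the Torelli locus $t^{-1}\notin F=\mathscr{O}(C\setminus P)$ for $g\geq 1$. Moreover, the sheaf of \cite[\S18.1.15]{bzf} on $\widehat{\mathcal{J}}_g$ is just the pull-back of the one on $\widehat{\mathcal{M}}_g$, since twisting by $\mathrm{End}\,\mathscr{L}=\mathscr{O}_C$ does nothing.

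Because the outgoing algebra differs from the isotropy algebra, Beilinson--Bernstein localization does not by itself give any action on $\widehat{\mathbb{V}}(V)$. What is missing is the paper's key identity $\mathfrak{h}^{\mathrm{out}}_\Gamma(V\otimes\mathscr{O})=(\mathfrak{sp}^{\mathrm{out}}_\Gamma(H')\ltimes\mathfrak{h}^{\mathrm{out}}_\Gamma)(V\otimes\mathscr{O})$. It comes from isotropy elements being products $f\cdot h$ with $f\in F$, together with $(\mathfrak{h}_\Gamma\,\widehat{\otimes}_{\mathbb{C}}\,H'_+)(V)=V$. Combined with the invariance of the isotropy sheaf (Corollary~\ref{cor:spoutpreserved}), it yields two facts: $\mathfrak{h}^{\mathrm{out}}_\Gamma(V\otimes\mathscr{O})$ is stable under $\mathscr{H}_2\,\widehat{\otimes}_{\mathbb{C}}\,\mathscr{O}_{\widehat{\mathcal{X}}_g}$, and $\mathfrak{sp}^{\mathrm{out}}(H')\ltimes\mathbb{F}$ acts trivially on the quotient.

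After that the twist is immediate, with no Picard-group computation. By \cite{avva}, $\mathscr{F}_\Xi$ is the quotient of $\mathscr{H}_2\,\widehat{\otimes}_{\mathbb{C}}\,\mathscr{O}$ by $\mathfrak{sp}^{\mathrm{out}}(H')\ltimes\mathbb{F}$, with $\bm{1}\mapsto -2$. Meanwhile $\bm{1}$ acts on $V$ by $d$, so $k=-\tfrac{d}{2}$. This uses the full quadratic action of Lemma~\ref{lemma:mpaction}, not just the Sugawara $\mathrm{Vir}\ltimes\mathfrak{h}$, which is far smaller than $\mathscr{H}_2$. The hypothesis $g\geq 3$ enters only through that uniformization. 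Your Torelli-plus-$\mathrm{Pic}$ route is not a valid substitute as stated. Atiyah algebras are classified by $\mathbb{H}^2(\Omega^{\geq 1})$ rather than by Chern classes, and the absence of nonconstant global functions does not bridge the two.

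Step (2) of part (i) also fails. The group acting along the fibers of $\widehat{\mathcal{X}}_g\to\mathcal{X}_g$ is $\mathbb{S}\mathrm{p}^+(H')\ltimes\mathbb{O}_1^\times$. Its Lie algebra $\mathfrak{sp}^+(H')\ltimes H'_+$ contains the toral elements $b_ib_{-i}$, and elements $b_ib_j$ with $i>0>j$ of both positive and negative degree. So it is neither pro-unipotent nor positively graded. As the paper stresses, it does not integrate to an action on $V$ itself, so $V$ is not a Harish-Chandra module; this also undermines your localization step. Moreover, the fibers are not torsors but homogeneous spaces with infinite-dimensional stabilizers $\mathbb{S}\mathrm{p}^+_F(H')$, which act nontrivially on $V$. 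A torsor-style descent is therefore unavailable.

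The paper instead exponentiates only on the quotient $\widehat{\mathbb{V}}(V)$. It uses local nilpotence of $b_ib_j$ with $i+j\neq 0$ and of $H'_+$, integral semisimplicity of $b_ib_{-i}$, finite-dimensionality of $Z/F$, and triviality of $\mathfrak{sp}^+_F(H')$ on fibers (again from the identity above). It then descends to $[\widehat{\mathcal{X}}_g/(\mathbb{S}\mathrm{p}^+(H')\ltimes\mathbb{O}_1^\times)]$, and on to $\mathcal{X}_g$ because the stabilizers act trivially \cite{alper2013good}. The twisted $\mathscr{D}$-module structure descends by compatibility of the two actions. The comparison with $\mathbb{V}_{\mathcal{J}_g}(V)$ holds because this equivariant structure extends the $\mathbb{A}\mathrm{ut}(\mathbb{O})\ltimes\mathbb{O}_1^\times$-structure together with independence of the point $P$.
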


Theorems \ref{thm:mainintroAg} and \ref{thm:mainintroXg} refine the results from our previous work \cite{avva}, where we initially constructed sheaves of coinvariants on $\mathcal{A}_g$ and $\mathcal{X}_g$ equipped with analogous Atiyah algebra actions. In that framework, the sheaves generally failed to recover the standard coinvariants over the Torelli and Jacobian loci, as the Lie algebras of outgoing states were too small. 
In fact, those Lie algebras are the minimal ones that yield the twisted $\mathscr{D}$-module structure, depending 
only on the points in $\mathcal{A}_g$ and $\mathcal{X}_g$ but not on the vertex algebras.
By employing larger Lie algebras here depending also on the vertex algebra (see Remark \ref{rmk:avvavsavha}), we now recover the standard constructions while preserving the twisted $\mathscr{D}$-module structure.

\smallskip

Finally, we present an explicit realization of coinvariants for the commutative Heisenberg vertex algebra $\pi^0$ of level $0$. Fix the standard conformal vector (\S\ref{sec:Hvoa}).
In this case, it is shown in \cite[\S 9.4.3]{bzf} building on Beilinson--Drinfeld \cite{beilinson1991quantization} that the space of coinvariants $\mathbb{V}\left(C,P, \pi^0\right)$ assigned to a pointed curve $(C,P)$ admits a canonical isomorphism
\begin{equation}
\label{eq:VCPpizeroiso}
\mathbb{V}\left(C,P, \pi^0\right) \cong \mathrm{Sym}\left(H^0\left(C, \Omega^1_C\right)^*\right).
\end{equation}
Equivalently, 
\[
\mathbb{V}\left(C,P, \pi^0\right) \cong \mathrm{Fun}\left(H^0(C, \Omega^1_C)\right),
\]
the ring of polynomial functions on the space of one-forms $H^0(C, \Omega^1_C)$. Since the spaces $\mathbb{V}\left(C,P, \pi^0\right)$ obtained for various $P$ in $C$ are all canonically isomorphic, they determine a space $\mathbb{V}\left(C, \pi^0\right)$ assigned to $C$ alone. Moreover, since 
\[
H^0\left(C, \Omega^1_C\right) \cong H^0\left(\mathrm{Jac}(C), \Omega^1_{\mathrm{Jac}(C)}\right), 
\]
the isomorphism \eqref{eq:VCPpizeroiso} can be rephrased in terms of the Jacobian $\mathrm{Jac}(C)$ as
\[
\mathbb{V}\left(C, \pi^0\right) \cong \mathrm{Sym}\left(H^0\left(\mathrm{Jac}(C), \Omega^1_{\mathrm{Jac}(C)}\right)^*\right).
\]
We show that in fact this realization extends beyond the Torelli locus:

\begin{thm}
\label{thm:Vpi0}
For a principally polarized abelian variety $(X,\Theta)$, 
the space of coinvariants $\mathbb{V}\left(X,\Theta, \pi^0\right)$ admits a canonical isomorphism
\[
\mathbb{V}\left(X,\Theta, \pi^0\right) \cong \mathrm{Sym}\left(H^0(X, \Omega^1_X)^*\right).
\]
Equivalently, $\mathbb{V}\left(X,\Theta, \pi^0\right) \cong \mathrm{Fun}\left(H^0(X, \Omega^1_X)\right)$.
\end{thm}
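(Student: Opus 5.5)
We will reduce the statement to a computation with the Lie algebra of outgoing states attached to an extended PPAV, and then show the result is independent of the extension data. Fix an extended PPAV $a=(X,\Theta,\ldots)$ lying over $(X,\Theta)$, in the sense of Arbarello--De Concini. Its extension data provide a formal coordinate, and hence an embedding of $H^0(X,\Omega^1_X)$, or more precisely of the relevant $g$-dimensional subspace $U_a\subset\mathbb{C}((t))$, into Laurent series. Write $\pi^0=\mathbb{C}[b_{-1},b_{-2},\dots]$ for the Fock module of the commutative Heisenberg Lie algebra at level $0$. At level $0$ the modes $b_n$ commute, so $\pi^0$ is the quotient of the polynomial algebra $\mathrm{Sym}(\mathfrak h)$ on all modes $b_n$, $n\in\mathbb Z$, by the ideal generated by the $b_n$ with $n\ge0$.

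The first step is to identify the Lie algebra of outgoing states $\mathcal{L}_a(\pi^0)$ defined in \eqref{eq:VhataV}. The conformal vector acts, but I expect that on the level-$0$ module the Virasoro part of the outgoing algebra acts trivially modulo the Heisenberg part. The reason is that the Sugawara-type vector $\frac12 b_{-1}^2$ acts by multiplication with quadratic expressions in the modes, which already lie in the ideal generated by the Heisenberg outgoing states. So the operative part is the abelian subalgebra $\mathfrak h_a^{\mathrm{out}}\subset\mathfrak h=\mathbb C((t))$, which is the annihilator of $U_a$ under the residue pairing $\langle f,\omega\rangle=\mathrm{Res}_t f\,\omega$. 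On the Torelli locus this annihilator is the ring of functions $\mathbb C[C\setminus P]$, matching \cite[\S9.4.3]{bzf}.

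The second step is the algebraic computation. Since $\pi^0=\mathrm{Sym}(\mathfrak h/\mathfrak h_+)$ is a polynomial algebra on which $\mathfrak h$ acts by multiplication (negative modes) and by zero (nonnegative modes), the coinvariants satisfy
\[
\pi^0/\mathfrak h_a^{\mathrm{out}}\pi^0\cong \mathrm{Sym}\bigl(\mathfrak h/(\mathfrak h_+ + \mathfrak h_a^{\mathrm{out}})\bigr).
\]
The residue pairing identifies the dual of $\mathfrak h/(\mathfrak h_+ + \mathfrak h_a^{\mathrm{out}})$ with $(\mathfrak h_+ + \mathfrak h_a^{\mathrm{out}})^\perp=\mathfrak h_+^\perp\cap U_a$. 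Here $\mathfrak h_+^\perp=\mathbb C[[t]]dt$, and $U_a$ is contained in it by construction, because the forms are regular at the point. This gives $\mathrm{Sym}(U_a^*)=\mathrm{Sym}(H^0(X,\Omega^1_X)^*)$. We must also check finite-dimensionality and nondegeneracy, namely that $\mathfrak h_+ + \mathfrak h_a^{\mathrm{out}}$ has codimension exactly $g$. I would obtain this from the Arbarello--De Concini description, in which the extension data make $U_a$ an honest $g$-dimensional subspace whose annihilator has the right complement; the analogue of Riemann--Roch, $\mathfrak h=\mathfrak h_+\oplus \mathfrak h_a^{\mathrm{out}}\oplus(\text{$g$-dim})$, should follow from the structure of the extended period data.

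The third step, which I expect to be the main obstacle, is canonicity: showing that the isomorphism is independent of the choice of extended structure $a$ over $(X,\Theta)$. Changing $a$ acts through the group of coordinate changes (the $\mathrm{Aut}\,\mathcal O$ and unipotent parts of the fibers of $\widehat{\mathcal A}_g\to\mathcal A_g$). I would check that these act on $\mathrm{Sym}(U_a^*)$ compatibly with the canonical identification $U_a\cong H^0(X,\Omega^1_X)$. Since a change of coordinate transforms the residue pairing equivariantly, the identification should be preserved. The twist from the Atiyah algebra is trivial here because $d/2$ times the level-$0$ contribution vanishes, so no projective ambiguity arises. This descends the isomorphism to $\mathbb V(X,\Theta,\pi^0)$ and also yields the $\mathrm{Fun}$ form.
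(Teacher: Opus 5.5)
Your argument is essentially the paper's proof. At a chosen extended PPAV $a=(Z,F,L)$ one has $\widehat{\mathbb{V}}(a,\pi^0)\cong\mathrm{Sym}\bigl(H'/(F\oplus H'_+)\bigr)=\mathrm{Sym}(Z/F)$ (Example~\ref{ex:pi0Gamma}), and \eqref{eq:Z/F} identifies this with $\mathrm{Sym}\bigl(H^0(X,\Omega^1_X)^*\bigr)$ independently of $a$. Your residue-pairing detour only dualizes this identification: your $U_a$ is $d(F^\perp_+)\cong F^\perp_+\cong(Z/F)^*$. The codimension-$g$ ``Riemann--Roch'' you leave open is immediate from $H'=Z\oplus H'_+$ and $\dim Z/F=g$.

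There are a few harmless inaccuracies you should correct:
\begin{enumerate}
\item The outgoing algebra is by definition $\mathfrak{h}_\Gamma\otimes_{\mathbb{C}}F$, so there is no Virasoro part to dispose of. Also, $F$ is not the annihilator of $U_a$: that annihilator is $F\oplus H'_+$, i.e.\ $\mathbb{C}\llbracket t\rrbracket+\mathscr{O}(C\setminus P)$ on the Torelli locus. The extra piece acts by zero on $\pi^0$, which is why your computation still comes out right.
\item The group moving $a$ within $\mathrm{Sp}(X,\Theta)$ is $\mathbb{S}\mathrm{p}^+(H')$, not just coordinate changes. Independence of $a$ amounts to two checks: each $\rho\in\mathbb{S}\mathrm{p}^+(H')$ induces $Z/F\to\rho Z/\rho F$ compatibly with \eqref{eq:Z/F}, and the stabilizers $\mathbb{S}\mathrm{p}^+_F(H')$ act trivially on $Z/F\subset F^\perp/F$.
\item No Atiyah-algebra twist enters for a single $(X,\Theta)$.
\end{enumerate}
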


Passing to the dual spaces, this implies a canonical isomorphism of the space of conformal blocks
\[
\mathbb{V}\left(X,\Theta, \pi^0\right)^\vee \cong \widehat{\mathrm{Sym}}\, H^0(X, \Omega^1_X),
\]
the completed symmetric algebra of the space of one-forms on $X$, extending the similar isomorphism in the curve case from \cite[\S10.5.11]{bzf}.
We also show that an analogous statement holds for commutative Heisenberg vertex algebras $\pi^0_\Gamma$ of arbitrary rank, see
Theorem \ref{thm:Vpi0Gamma}.

\smallskip

Looking ahead, it would be interesting to find a realization of coinvariants on PPAVs also in the case of the Heisenberg vertex algebra $\pi$ of level $1$. In the curve case, such a realization is known in terms of those polydifferentials on a disk that extend to the curve --- see \cite{beilinson1994affine} and \cite[\S 10.5]{bzf}.

On a more general note, it would be interesting to extend the results of this work to the case of lattice vertex algebras.
In the curve case, positive-definite even lattice VOAs yield vector bundles of coinvariants of \textit{finite rank} over $\mathcal{M}_g$ \cite{dgt2}.
As a lattice vertex algebra contains a Heisenberg vertex subalgebra, one could consider the Lie algebra of outgoing states used here in that case as well. However, the resulting construction does not recover the standard one on curves, as the Lie algebra of outgoing states for lattice vertex algebras on curves also incorporates the additional vertex operators of the lattice vectors. 
Thus it remains to determine whether the Lie algebra of outgoing states admits an extension to PPAVs also for lattice vertex algebras. We plan to return to this question in future work.

\smallskip

The paper is structured as follows. We review coordinatized curves, extended abelian varieties, and their moduli spaces in \S\ref{sec:backg}. In \S\ref{sec:HVOA} we review Heisenberg vertex algebras constructed from a lattice and the corresponding metaplectic action. 
We define Lie algebras of outgoing states and the resulting spaces of coinvariants and conformal blocks at extended PPAVs in \S\ref{sec:spaces}.
In \S\ref{sec:homogspaces} we discuss the space $\mathrm{Sp}(X,\Theta)$ of all extended PPAVs with fixed underlying PPAV $(X,\Theta)$. We show how this is a homogeneous space for the action of a group $\mathbb{S}\mathrm{p}^+\left(H' \right)$ with nontrivial stabilizers. We apply this in \S\ref{sec:VPPAV}, where we construct spaces of coinvariants assigned to $(X,\Theta)$ by descent along $\mathrm{Sp}(X,\Theta)$.
In \S\ref{sec:symmetagpsch} we treat symplectic and metaplectic group (ind-)schemes.
We use them in the construction of sheaves of coinvariants over $\widehat{\mathcal{A}}_g$ in \S\ref{sec:sheavesaghatag} and over $\widehat{\mathcal{X}}_g$ in \S\ref{sec:sheavesaghatxg}.
Throughout, we work over $\mathbb{C}$.

\section{Background}
\label{sec:backg}

Here we review extended PPAVs and their moduli spaces from \cite{adc91}. We refer to \cite[\S\S 1--2]{avva} for an alternative review of these concepts.

\subsection{The Heisenberg algebra} 
The \textit{Heisenberg algebra} is $H:=\mathbb{C}((t))$ with Lie bracket given by
\begin{equation}
\label{eq:symplectic}
\langle f,g\rangle := - \mathop{\mathrm{Res}}_{t=0} \, f \, dg \qquad \mbox{for $f,g\in H$}.
\end{equation}
The underlying vector space $\mathbb{C}((t))$ is a complete topological vector space with respect to the $t$-adic topology and is topologically generated by $b_i:=t^i$ for $i\in\mathbb{Z}$. As the Lie bracket \eqref{eq:symplectic} is continuous with respect to this topology, $H$ is a complete topological Lie algebra. 

Let $H':=H/\mathbb{C}\,b_0$. The {Heisenberg algebra} $H$ can be viewed as the universal central extension of $H'$ defined by the two-cocycle \eqref{eq:symplectic}. Below we will also use the following notation:
\[
H_- := t^{-1}\mathbb{C}\left[ t^{-1}\right],\qquad
H'_+ := t\mathbb{C}\llbracket t \rrbracket.
\]

\subsection{Extended PPAVs}
An \textit{extended PPAV} of dimension $g$ is a triple $(Z, F, L)$ where 
\begin{align*}
&\mbox{$Z$ is a Lagrangian subspace of $H'$,} \\
&\mbox{$F$ is a codimension  $g$ subspace of $Z$, and} \\
&\mbox{$L$ is a rank $2g$ lattice in $F^\perp/F$,}
\end{align*}
satisfying the following four suitable conditions. 
The first condition is
\begin{equation}
\label{eq:cond1}
Z \cap H'_+ = 0.
\end{equation}
This implies the decompositions into maximal isotropic subspaces
\[
H'= Z\oplus H'_+ \quad\mbox{and}\quad F^\perp/F = Z/F \oplus F^\perp_+ \quad \mbox{where} \quad F^\perp_+:= F^\perp \cap H'_+.
\]
It follows that $F^\perp/F$ has dimension $2g$.
Here $F^\perp/F$ is equipped with the nondegenerate symplectic form induced from the form $\langle \, , \, \rangle$ on $H'$ from \eqref{eq:symplectic}.

The second condition is
\begin{equation}
\label{eq:cond2}
 L_\mathbb{R} \cap  F^\perp_+ =0 \quad\mbox{in $F^\perp/F$}, \quad\mbox{where}\quad L_\mathbb{R} := L\otimes_{\mathbb{Z}} \mathbb{R}.
\end{equation}
This implies: the projection $F^\perp/F \rightarrow Z/F$ induces a real isomorphism $L_\mathbb{R} \cong Z/F$; the real structure on $F^\perp/F$ induced by $L_\mathbb{R}$ identifies $F^\perp_+$ with the conjugate of $Z/F$; and the form on $Z/F$ given by
\[
B(u,v):= \frac{1}{\pi}\, \langle \overline{u} , v \rangle \quad \mbox{for $u,v\in Z/F$}
\]
is Hermitian. 

The third and fourth conditions on the triple $(Z, F, L)$ are:
\begin{align}
\label{eq:cond3}
& \frac{1}{2\pi i}\, \langle \, , \, \rangle \,\, \mbox{is unimodular on $L$;}\\[5pt]
\label{eq:cond4}
&\mbox{the form $B$ on $Z/F$ is positive definite.}
\end{align}

An extended PPAV $(Z, F, L)$ determines a PPAV $(X,\Theta)$ of dimension $g$ as follows: $X$ is the quotient of the $g$-dimensional space $Z/F$ by the image of $L$ under the projection \mbox{$F^\perp/F \rightarrow Z/F$,} and the polarization $\Theta$ is induced by $B$, which is principal  by  \eqref{eq:cond3}.
In particular, this construction yields an isomorphism
\begin{equation}
\label{eq:Z/F}
Z\, / \, F \cong H^0\left(X, \Omega^1_X \right)^*.
\end{equation}
Also, the extended PPAV $(Z, F, L)$  determines the isomorphism class of an extension
\begin{equation}
\label{eq:H_+extX}
0\rightarrow H'_+ \rightarrow H'/K \rightarrow X \rightarrow 0
\end{equation}
where $K$ is the preimage of $L$ under the projection $F^\perp \rightarrow F^\perp/F$.

\subsection{The moduli space of extended PPAVs}
\label{sec:moduliAghat}
Let $\widehat{\mathcal{A}}_g$ be the moduli space of extended PPAVs of dimension $g$ from \cite{adc91}.
This is an infinite-di\-men\-sion\-al complex manifold rational homotopy equivalent to the separated, smooth Deligne-Mumford stack $\mathcal{A}_g$ via the map
\[
\widehat{\mathcal{A}}_g \rightarrow \mathcal{A}_g
\]
given by the above assignment $(Z,F,L)\mapsto (X,\Theta)$. 
The space $\widehat{\mathcal{A}}_g$ is constructed as the quotient of an extended Siegel upper half-space $\widehat{\mathcal{H}}_g$ by a free and properly discontinuous action of the symplectic group $\mathrm{Sp}(2g, \mathbb{Z})$.

We review this construction as it will be needed in \S\ref{sec:homogspaces}. The \textit{extended Siegel upper half-space} $\widehat{\mathcal{H}}_g$ of degree $g$ is the infinite-dimensional complex manifold
\begin{equation*}
\widehat{\mathcal{H}}_g := \widetilde{S}^2(H'_+) \times \mathcal{B}_g\left(H'_+\right) \times \mathcal{H}_g
\end{equation*}
where
\begin{align*}
\widetilde{S}^2\left(H'_+\right) &:= \left\{ \varphi \colon H_- \rightarrow H'_+ \, \Bigg| \,  
\begin{array}{l}
\mbox{$\varphi$ is linear and symmetric,} \\[5pt]
\mbox{i.e., $\langle a, \varphi(b)\rangle = \langle \varphi(a), b\rangle$  for all  $a,b\in H_-$} 
\end{array}
\right\},\\[5pt]
\mathcal{B}_g\left(H'_+\right) &:= \mbox{the manifold of frames $\bm{h}$ of $(H'_+)^g$,}\\ 
&\,\,\,\,\quad\mbox{i.e., $\bm{h}=(h_1, \dots, h_g)\in(H'_+)^g$ for linearly independent $\{h_i\}_i$,}\\[5pt]
\mathcal{H}_g &:= \mbox{the Siegel upper half-space of degree $g$.}
\end{align*}
There is a natural map 
\begin{equation}
\label{eq:HghattoAghat}
\widehat{\mathcal{H}}_g\rightarrow \widehat{\mathcal{A}}_g, \qquad (\varphi, \bm{h}, \Omega)\mapsto (Z,F,L)
\end{equation}
with $Z$ equal to the graph of $\varphi$, the isotropic space $F\subset Z$ defined by the vanishing of the linear forms $h_1=\cdots=h_g=0$, and the lattice $L$ constructed from $\Omega$.

The symplectic group $\mathrm{Sp}(2g, \mathbb{Z})$ acts transitively on the fibers of \eqref{eq:HghattoAghat}
 as follows: for
\[
\sigma=\left(
\begin{array}{cc}
\alpha & \beta \\
\gamma & \delta
\end{array}
\right)\in \mathrm{Sp}(2g, \mathbb{Z})
\]
one has
\begin{equation}
\label{eq:Sp2gZaction}
\sigma \left(\varphi, \bm{h}, \Omega\right) := \left(\varphi,\, \bm{h}\left(\gamma \Omega + \delta \right)^{-1}, \,
\left(\alpha\Omega+\beta\right)\left(\gamma \Omega + \delta \right)^{-1}\right).
\end{equation}

\subsection{The universal extended PPAV}
The moduli space $\widehat{\mathcal{A}}_g$ admits a universal family $\widehat{\mathcal{X}}_g \rightarrow \widehat{\mathcal{A}}_g$ which is rational homotopy equivalent to the universal family $\mathcal{X}_g$  over the stack $\mathcal{A}_g$ \cite{adc91}. 
The fiber over a point $(Z,F,L)$ in $\widehat{\mathcal{A}}_g$ is the contracted product
\[
\left(H' / K \right) \times_{G} \mathscr{Q}
\]
where $G$ is the group of points of order $2$ in $H'/K$, and $\mathscr{Q}$ is the $G$-torsor of integral quadratic forms $q$ on $L$ satisfying 
\[
q(a) + q (b) - q(a+b) \equiv \frac{1}{2\pi i} \, \langle a, b \rangle \quad \mbox{mod $2$.}
\]
In particular, the fiber over $(Z,F,L)$ is non-canonically isomorphic to $H'/K$ from \eqref{eq:H_+extX}, and the map $\widehat{\mathcal{X}}_g\rightarrow \widehat{\mathcal{A}}_g$ has no zero-section.
Points of $\widehat{\mathcal{X}}_g$ are denoted as $(Z, F, L, \overline{h}, q)$.

\subsection{The Torelli map}
\label{sec:Torelli}
The usual Torelli map $\mathcal{M}_g\hookrightarrow \mathcal{A}_g$ extends in the following way.
A \textit{coordinatized curve} of genus $g$ is a triple $(C,P,t)$ consisting of an algebraic curve $C$ of genus $g$, a point $P\in C$, and a local coordinate $t$ at $P$.
The moduli space $\widehat{\mathcal{M}}_g$ of coordinatized curves of genus $g$ is an infinite-dimensional complex manifold rational homotopy equivalent to the separated, smooth Deligne-Mumford stack $\mathcal{M}_g$ via the forgetful map \mbox{$\widehat{\mathcal{M}}_g\rightarrow \mathcal{M}_g$ \cite{adkp, besh}.}

For a coordinatized curve $(C,P,t)$, its \textit{extended Jacobian} is the extended PPAV $a=(Z,F,L)$ defined in \cite[\S 2]{adc91} as follows.
Let $\mathscr{O}(C\setminus P)$ be the space of regular functions on $C\setminus P$. One has an inclusion of commutative Lie algebras \mbox{$\mathscr{O}(C\setminus P)\hookrightarrow \mathbb{C}((t))$} given by the Laurent series expansion at the point $P$ in the coordinate $t$. Let
\begin{equation}
\label{eq:FonTorellilocus}
F:=\mathrm{Im}\left(\mathscr{O}(C\setminus P)\hookrightarrow \mathbb{C}((t))\right).
\end{equation}
Moreover, let $D_P:=\mathrm{Spec}\left( \mathscr{O}_P\right)$ be the disk around the point $P$ and $D^\times_P:=\mathrm{Spec}\left( \mathscr{K}_P\right)$ the punctured disk around $P$. Let
\begin{equation*}
Z:=\mathrm{Im}\left(\mathscr{O}\left(D^\times_P\right)/\mathscr{O}\left(D_P\right)\hookrightarrow \mathbb{C}((t))\right).
\end{equation*}
The Mayer-Vietoris sequence for the cover $\left(D_P, C\setminus P\right)$ of $C$ yields an isomorphism
\begin{equation*}
Z\, / \, F \cong H^1\left(C, \mathscr{O}_C \right).
\end{equation*}
By Serre duality, this recovers \eqref{eq:Z/F} for Jacobians. Under this identification, let
\[
L \cong H^1\left(C, \mathbb{Z} \right).
\]
The above construction defines an injective map of moduli spaces
\begin{equation}
\label{eq:extTormap}
\widehat{\mathcal{M}}_g \hookrightarrow \widehat{\mathcal{A}}_g
\end{equation}
called the \textit{extended Torelli map}. One has a commutative diagram
\[
\begin{tikzcd}
\widehat{\mathcal{M}}_g \arrow[hookrightarrow]{r}\arrow{d} &\widehat{\mathcal{A}}_g \arrow{d}\\
\mathcal{M}_g \arrow[hookrightarrow]{r} &\mathcal{A}_g
\end{tikzcd}
\]
where the bottom horizontal map is the standard Torelli map.

More generally, let $\widehat{\mathcal{P}}_g$ be the moduli space of tuples $(C, P, t, \mathscr{L} , \varphi)$
where $(C, P, t)$ is a coordinatized curve of genus $g$, $\mathscr{L}$ is a line bundle of degree $g-1$ on $C$, and $\varphi$ is a trivialization of the restriction of $\mathscr{L}$ to the disk $D_P$. 
From \cite{adc91}, one has a natural map $\widehat{\mathcal{P}}_g \rightarrow \widehat{\mathcal{X}}_g$ fitting into a commutative diagram
\[
\begin{tikzcd}
&\widehat{\mathcal{X}}_g \arrow{rr}\arrow{dd} &&\widehat{\mathcal{A}}_g \arrow{dd}\\
\widehat{\mathcal{P}}_g \arrow[crossing over]{rr}\arrow{dd}\arrow[hookrightarrow]{ru} &&\widehat{\mathcal{M}}_g \arrow[hookrightarrow]{ru}\\
&\mathcal{X}_g \arrow{rr} &&\mathcal{A}_g\\
\mathcal{P}_g \arrow{rr}\arrow[hookrightarrow]{ru} &&\mathcal{M}_g.\arrow[hookrightarrow]{ru}\arrow[leftarrow, crossing over]{uu}
\end{tikzcd}
\]

\subsection{Uniformizations}
Here we review the uniformizations of the moduli spaces at play.
For the curve case, one needs the following Lie algebras.
The $\mathrm{Witt}$ Lie algebra is $\mathrm{Witt}=\mathbb{C}((t))\partial_t$. This is topologically generated by $L_p:=-t^{p+1}\partial_t$ with relations $[L_p, L_q]=(p-q)L_{p+q}$ for $p,q\in\mathbb{Z}$. Also, the Virasoro Lie algebra $\mathrm{Vir}$ is the universal central extension 
\begin{equation}
\label{eq:Vir}
0\rightarrow \mathbb{C}\,\bm{1}\rightarrow\mathrm{Vir}\rightarrow\mathrm{Witt} \rightarrow 0
\end{equation}
with Lie bracket $[\bm{1}, L_p]=0$ and 
\[
[L_p, L_q]=(p-q)L_{p+q} +\frac{1}{12}\left(p^3-p\right)\delta_{p,-q} \,\bm{1}
\]
where $\delta_{p,-q}=1$ when $p=-q$, and $\delta_{p,-q}=0$ otherwise.

Let $\Lambda$ be the pull-back to $\widehat{\mathcal{M}}_g$ of the Hodge line bundle on $\mathcal{A}_g$ via the composition of the maps $\widehat{\mathcal{M}}_g\rightarrow\mathcal{M}_g\rightarrow\mathcal{A}_g$, and $\Xi$ the pull-back to $\widehat{\mathcal{P}}_g$ of the canonical line bundle on $\mathcal{X}_g$ via the composition of the maps $\widehat{\mathcal{P}}_g\rightarrow\mathcal{P}_g\rightarrow\mathcal{X}_g$.

\begin{theorem}[\cite{adkp, besh, kvir}]
\label{thm:WittVirunif}
\begin{enumerate}[(i)]
\item The Lie algebra $\mathrm{Vir}$ acts on the line bundle $\Lambda$ on $\widehat{\mathcal{M}}_g$ by first-order differential operators extending a transitive action of $\mathrm{Witt}$ on  $\widehat{\mathcal{M}}_g$ and with the central element $\bm{1} \in\mathrm{Vir}$ acting as multiplication by $2$ on the fibers of $\Lambda\rightarrow \widehat{\mathcal{M}}_g$.

\smallskip

\item There exists a central extension $\mathfrak{D}$ of $\mathrm{Witt}\ltimes H'$ acting on the line bundle $\Xi$ on $\widehat{\mathcal{P}}_g$ by first-order differential operators. This action extends a transitive action of $\mathrm{Witt}\ltimes H'$ on  $\widehat{\mathcal{P}}_g$, and the central element $\bm{1}\in \mathfrak{D}$ acts as multiplication by $-2$ on the fibers of $\Xi\rightarrow \widehat{\mathcal{P}}_g$.
\end{enumerate}
\end{theorem}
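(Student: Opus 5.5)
The statement is Theorem~\ref{thm:WittVirunif}, the Virasoro and $\mathfrak{D}$ uniformization of $\widehat{\mathcal{M}}_g$ and $\widehat{\mathcal{P}}_g$. It is quoted from \cite{adkp, besh, kvir}, so the plan is to recall the standard argument rather than build a new one.

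\emph{Step 1: transitivity of $\mathrm{Witt}$.} The tangent space to $\widehat{\mathcal{M}}_g$ at $(C,P,t)$ is identified with $\mathrm{Witt}$ modulo the vector fields $\mathscr{T}(C\setminus P)$ regular on $C\setminus P$. One gets this by gluing: a derivation of $\mathbb{C}((t))$ infinitesimally deforms the gluing of $D_P$ with $C\setminus P$ together with the coordinate $t$. Infinitesimal deformations of the triple $(C,P,t)$ are computed by the \v{C}ech complex for the cover $(D_P, C\setminus P)$ with coefficients in vector fields, plus the coordinate data on $D_P$. This gives a surjection $\mathrm{Witt}\to T_{(C,P,t)}\widehat{\mathcal{M}}_g$, which integrates to a transitive infinitesimal action.

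\emph{Step 2: lifting to $\Lambda$.} Realize the fiber of $\Lambda$ as the determinant line of the Sato Grassmannian point $F=\mathscr{O}(C\setminus P)\subset \mathbb{C}((t))$, using $F$ from \eqref{eq:FonTorellilocus}. The determinant of $H^0(C,\mathscr{O})\to$ (cokernel) is $\Lambda$ up to a trivial factor. The Lie algebra $\mathfrak{gl}(\mathbb{C}((t)))$ acts on the determinant bundle of the Sato Grassmannian through its Japanese cocycle central extension. Restricting along $\mathrm{Witt}\to\mathfrak{gl}$ gives an action of a central extension of $\mathrm{Witt}$, which is $\mathrm{Vir}$ since $\mathrm{Vir}$ is the universal central extension.

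The central charge is found by computing the Japanese cocycle on $[L_p,L_{-p}]$ acting on functions, i.e.\ on $\lambda$-differentials with $\lambda=0$. The Mumford-type formula $c_\lambda = 6\lambda^2-6\lambda+1$ gives $c=1$ for $\lambda=0$ relative to the normalization in \eqref{eq:Vir}. The stated multiplication by $2$ then reflects the normalization $\frac{1}{12}(p^3-p)$ versus the cocycle's value and a sign convention on $\det$ versus $\det^{-1}$. Pinning this constant down against the paper's conventions is the main obstacle. I would check it directly on $p=2$ using the basis $b_i$.

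\emph{Step 3: the case of $\widehat{\mathcal{P}}_g$.} Argue the same way with the extra data $(\mathscr{L},\varphi)$. Deformations of $(\mathscr{L},\varphi)$ glued along $D_P^\times$ are given by $H'=\mathbb{C}((t))/\mathbb{C}$ acting by multiplication on the transition function, modulo $\mathscr{O}(C\setminus P)$. This yields the transitive action of $\mathrm{Witt}\ltimes H'$. The fiber of $\Xi$ is the determinant line of $H^*(C,\mathscr{L})$, realized as the determinant of the Sato Grassmannian point given by sections of $\mathscr{L}$ on $C\setminus P$. Pulling back the $\mathfrak{gl}$-central extension gives $\mathfrak{D}$. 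Its cocycle restricts to the Heisenberg cocycle \eqref{eq:symplectic} on $H'$ and to the Virasoro cocycle on $\mathrm{Witt}$, with the mixed term absorbed since $\deg\mathscr{L}=g-1$ corresponds to $\lambda=1/2$. The central element acts by $-2$, the sign flip coming from the convention that $\Xi$ is dual to the theta-determinant and induces twice the polarization. This can be checked on the Heisenberg part via $[b_1,b_{-1}]$.
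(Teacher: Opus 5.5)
The paper does not prove this statement. It is quoted from \cite{adkp, besh, kvir}, with $\mathfrak{D}$ taken from \cite[(4.7)]{adc91}. Your skeleton is the standard route in those sources: transitivity by regluing along $D_P^\times$, then pulling back the $\mathfrak{gl}$-central extension along the Krichever embedding into the Sato Grassmannian. However, the only non-formal content of the statement, the constants $2$ and $-2$, is not correctly obtained.

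In Step 2, Mumford's formula cannot give a central charge. For $\lambda=0$ the exponent $6\lambda^2-6\lambda+1=1$ only says $\det H^*(C,\mathscr{O}_C)\cong\Lambda$, which Serre duality already gives. The value comes from the cocycle itself. For $p>0$, write $A_{+-},A_{-+}$ for the off-diagonal blocks of $A$ with respect to $\mathbb{C}((t))=\mathbb{C}\llbracket t\rrbracket\oplus t^{-1}\mathbb{C}[t^{-1}]$. Using $L_pt^n=-nt^{n+p}$, one gets $\mathrm{tr}\big((L_p)_{+-}(L_{-p})_{-+}\big)=\sum_{n=0}^{p-1}n(n-p)=-\frac{p^3-p}{6}$. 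This is $-2$ times the cocycle of \eqref{eq:Vir}; in this convention the charge on $\lambda$-differentials is $-2(6\lambda^2-6\lambda+1)$, not $6\lambda^2-6\lambda+1$. So $c=1$ is wrong, and the $2$ is not a normalization artifact. Only the sign is convention-dependent, e.g.\ on whether the Sato determinant pulls back to $\Lambda$ or $\Lambda^{-1}$. Your proposed check at $p=2$ would have exposed this: the trace is $-1$, against $\frac12$.

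Step 3 contains a genuine error. The same computation for multiplication operators gives $\mathrm{tr}\big((t^n)_{+-}(t^{-n})_{-+}\big)=n=\langle b_n,b_{-n}\rangle$. So on $\det H^*(C,\mathscr{L})^{\pm1}$ the $H'$-part of the pulled-back cocycle is exactly $\pm$ the pairing \eqref{eq:symplectic}. The central element $\bm{1}\in\mathfrak{D}$ is normalized through $\mathfrak{D}\subset\mathscr{H}_2$ (Figure~\ref{fig:bigAtiyahXidiag}), where $H'$ carries precisely the bracket \eqref{eq:symplectic}. Hence $\bm{1}$ acts on your line by $\pm1$, and no sign convention turns this into $-2$. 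This matches the geometry: $\det R\Gamma(C,\mathscr{L})^{-1}\cong\mathscr{O}(\Theta)$ on $\mathrm{Pic}^{g-1}(C)$ induces the principal polarization once, whereas $\Xi$ induces it twice. Your sentence invoking ``twice the polarization'' contradicts your own identification of the fiber. You need a square, e.g.\ $\det R\Gamma(C,\mathscr{L})^{-1}\otimes\det R\Gamma(C,\omega_C\otimes\mathscr{L}^{-1})^{-1}$. You then need to identify it with the pullback of the canonical bundle of \cite{adc91} and match the sign to \cite[(4.7)]{adc91}.

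Also, $\deg\mathscr{L}=g-1$ does not make the mixed term disappear. The restricted cocycle is local and depends only on how $\mathrm{Witt}$ acts on sections over $D_P^\times$. With the trivialization $\varphi$, $\mathrm{Witt}$ acts as on functions. There the $\mathrm{Witt}$-part is twice the Virasoro cocycle up to sign, and $\mathrm{tr}\big((L_p)_{+-}(t^{-p})_{-+}\big)=\frac{p(p+1)}{2}\neq0$. So the resulting cocycle is not ``Virasoro plus Heisenberg''. It must be compared with the specific cocycle of $\mathfrak{D}$, i.e.\ the pullback of the cocycle of $\mathscr{H}_2$ along the embedding $\mathrm{Witt}\ltimes H'\to\mathfrak{sp}(H')\ltimes H'$ that makes $\widehat{\mathcal{P}}_g\to\widehat{\mathcal{X}}_g$ equivariant. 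That embedding may involve a shift $L_p\mapsto L_p+\alpha(p+1)b_p$.
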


We refer to \cite[(4.7)]{adc91} for the exact description of $\mathfrak{D}$ (see also~\mbox{\cite[\S 1.7]{avva}}).

The uniformizations of Theorem \ref{thm:WittVirunif} extend beyond the Torelli locus in the following way.
The bilinear skew-symmetric form \eqref{eq:symplectic} on $H$ restricts to a nondegenerate symplectic form on $H'$.
The $\mathrm{Witt}$ algebra is a Lie subalgebra of the \textit{symplectic  algebra} $\mathfrak{sp}\left(H' \right)$ defined as
\[
\mathfrak{sp}\left(H' \right) := \left\{ X\in \mathfrak{gl}(H') \, | \, \langle X a, b \rangle + \langle a,X b \rangle =0, \mbox{ for all } a,b\in H' \right\}.
\]
Here $\mathfrak{gl}(H')=\mathrm{Lie}\left( \mathrm{GL}\left(H'\right)\right)$.
Also, for $(Z, F, L)$ in $\widehat{\mathcal{A}}_g$, let
\begin{equation}
\label{eq:spF}
\mathfrak{sp}_{F} \left( H' \right):= \left\{ X\in \mathfrak{sp}  \left( H' \right) \,\, | \,\, X\left( F^\perp \right) \subseteq F  \right\}.
\end{equation}
This is the Lie subalgebra of $\mathfrak{sp}\left( H'\right)$ topologically generated by $fb_i$ with $f\in F$ and $i\in\mathbb{Z}\setminus\{0\}$.

\begin{theorem}[\cite{adc91}]
\label{thm:spspHunif}
\begin{enumerate}[(i)]
\item The Lie algebra \sloppy\mbox{$\mathfrak{sp} \left( H' \right)$} acts transitively on  $\widehat{\mathcal{A}}_g$.
For $(Z, F, L)$ in $\widehat{\mathcal{A}}_g$, one has
\[
T_{(Z, F, L)} \left( \widehat{\mathcal{A}}_g \right) \cong \mathfrak{sp}  \left( H' \right) \,/\, \mathfrak{sp}_{F} \left( H' \right).
\]

\smallskip

\item The Lie algebra \sloppy\mbox{$\mathfrak{sp} \left( H' \right)\ltimes H'$} acts transitively on  $\widehat{\mathcal{X}}_g$. For $(Z, F, L, \overline{h}, q)$ in $\widehat{\mathcal{X}}_g$, one has
\[
T_{(Z, F, L, \overline{h}, q)} \left( \widehat{\mathcal{X}}_g \right) \cong \mathfrak{sp}  \left( H' \right) \ltimes H' \,/\,  \mathfrak{sp}_{F} \left( H' \right) \ltimes F.
\]
\end{enumerate}
\end{theorem}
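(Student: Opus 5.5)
The plan is to compute the infinitesimal action of $\mathfrak{sp}(H')$ on triples $(Z,F,L)$ directly. Then I identify its stabilizer with $\mathfrak{sp}_F(H')$ and obtain transitivity by comparing with the chart $\widehat{\mathcal{H}}_g\rightarrow\widehat{\mathcal{A}}_g$ from \eqref{eq:HghattoAghat}. Since $\mathrm{Sp}(2g,\mathbb{Z})$ acts freely and properly discontinuously, \eqref{eq:HghattoAghat} is a local isomorphism. Hence $T_{(Z,F,L)}\widehat{\mathcal{A}}_g$ can be computed as $T\widetilde{S}^2(H'_+)\oplus T\mathcal{B}_g(H'_+)\oplus T\mathcal{H}_g$ at a preimage $(\varphi,\bm{h},\Omega)$.

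First, the action. An element $X\in\mathfrak{sp}(H')$ is viewed as the tangent vector to $\exp(\epsilon X)$, which sends
\[
(Z,F,L)\mapsto \bigl(e^{\epsilon X}Z,\, e^{\epsilon X}F,\, e^{\epsilon X}L\bigr).
\]
Here $e^{\epsilon X}$ is symplectic, so it maps $F^\perp/F$ isomorphically onto $(e^{\epsilon X}F)^\perp/e^{\epsilon X}F$, and $L$ is transported along this isomorphism. Conditions \eqref{eq:cond1}--\eqref{eq:cond4} are open, except that \eqref{eq:cond3} is preserved exactly by symplectic transport, so for small $\epsilon$ the image is again an extended PPAV. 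In the chart this is a concrete vector field. The $Z$-component of $X$ is the symmetric map $Z\rightarrow H'/Z\cong H'_+$ induced by $X$, which gives the $\varphi$-direction. The $F$-component $F\rightarrow Z/F$ gives the $\bm{h}$-direction. The map induced on $F^\perp/F$ moves $Z/F$ relative to the locally constant lattice $L$, which gives the $\Omega$-direction.

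Second, the stabilizer. Suppose $X(F^\perp)\subseteq F$. Then $X(F)\subseteq F$ and, by skew-adjointness, $X(F^\perp)\subseteq F^\perp$. Since $F\subset Z\subset F^\perp$ (because $F\subseteq Z$ and $Z$ is Lagrangian), also $X(Z)\subseteq F\subseteq Z$. Finally $X$ induces $0$ on $F^\perp/F$, so $L$ is fixed and $X$ is tangent to the trivial deformation.

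Conversely, suppose $X$ gives the zero tangent vector. Then $X$ preserves $F$, and hence $F^\perp$. The induced element of $\mathfrak{sp}(F^\perp/F)$ must fix $L$ infinitesimally. Since $L_\mathbb{R}=F^\perp/F$ as real spaces and $L$ is discrete, an endomorphism moving $L$ only to first order while preserving it must vanish. Therefore $X(F^\perp)\subseteq F$. This yields the injection $\mathfrak{sp}(H')/\mathfrak{sp}_F(H')\hookrightarrow T_{(Z,F,L)}\widehat{\mathcal{A}}_g$. The description of $\mathfrak{sp}_F(H')$ as topologically generated by $f b_i$ comes from writing elements of $\mathfrak{sp}(H')$ as (completed) symmetric tensors $\sum f\,b_i$ acting by $a\mapsto \langle f,a\rangle b_i+\langle b_i,a\rangle f$.

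Third, surjectivity, which I expect to be the main obstacle because of the infinite-dimensional topology. I will exhibit preimages for each of the three summands separately. For the $\varphi$-direction, elements of $\widetilde{S}^2(H'_+)$, namely continuous symmetric maps $H_-\rightarrow H'_+$, extend to elements of $\mathfrak{sp}(H')$ that kill $H'_+$ and map $Z\cong H_-$ into $H'_+$; this requires checking continuity in the $t$-adic topology. For the $\bm{h}$-direction, I use elements $f\,b_i$ with $f\in Z$, which change the linear forms cutting out $F$. For the $\Omega$-direction, I lift elements of $\mathfrak{sp}(F^\perp/F)$ using a splitting $H'=F\oplus(Z/F\oplus F^\perp_+)\oplus F^\perp_+{}'$, and use that $\mathfrak{sp}(2g)$ surjects onto $T\mathcal{H}_g$. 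Transitivity follows since the image of the infinitesimal action is the full tangent space and $\widehat{\mathcal{A}}_g$ is connected ($\widehat{\mathcal{H}}_g$ being connected).

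For (ii), I extend the construction by letting $h\in H'$ act by translation on the fiber $(H'/K)\times_G\mathscr{Q}$, with $q$ locally constant. The semidirect product structure is then the compatibility $X(h+K)=Xh+XK$. The fiber directions are $H'/F$ infinitesimally, since $K/F=L$ is discrete. So the stabilizer is $\mathfrak{sp}_F(H')\ltimes F$, and surjectivity follows by combining (i) with translations.
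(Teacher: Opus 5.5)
The paper quotes this statement from \cite{adc91} without proof, so I measure your argument against what the statement needs. Your handling of the $\varphi$- and $\bm h$-directions is fine, and so is the inclusion of $\mathfrak{sp}_F(H')$ in the stabilizer: the elements used there lie in $\mathfrak{sp}^+(H')$ or $\mathfrak{sp}_F(H')$, and transporting the whole triple by them causes no problem.

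\textbf{The gap.} The gap is Step~1 as applied to the $\Omega$-directions, i.e.\ how $X$ acts on the lattice. You transport $L$ and appeal to openness. But as the paper describes extended PPAVs, conjugation with respect to $L_{\mathbb R}$ carries $Z/F$ onto $F^\perp_+=F^\perp\cap H'_+$, which ties $L$ to the \emph{fixed} subspace $H'_+$. This is a closed condition, not a consequence of the open ones, and it cannot be dropped:
\begin{itemize}
\item over a fixed $(Z,F)$, the chart $(\varphi,\bm h,\Omega)$ supplies only a frame of the fixed $g$-dimensional space $F^\perp_+$ and a period matrix, i.e.\ $g^2+g(g+1)/2$ parameters;
\item lattices satisfying just the open conditions form an open piece of $\mathrm{Sp}(2g,\mathbb C)/\mathrm{Sp}(2g,\mathbb Z)$, of dimension $2g^2+g$.
\end{itemize}
So with only the open conditions, \eqref{eq:HghattoAghat} is not a local isomorphism.

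Transport keeps the compatibility only if $X(F^\perp_+)\subseteq H'_++F$. This fails exactly when the image of $X$ in $\mathfrak{sp}(H')/\mathfrak{sp}_F(H')\cong\mathrm{Sym}^2(H'/F)$ has a nonzero $\mathrm{Sym}^2(Z/F)$-component. Those are precisely the elements you need to move $\Omega$, because $\mathfrak{sp}^+(H')+\mathfrak{sp}_F(H')$ is tangent to the fibres over $\mathcal A_g$ (Theorem~\ref{thm:Sp+action}).

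Concretely, let $e_1,\dots,e_g\in Z$ be dual to $\bm h$, and let $X\colon a\mapsto\langle e_k,a\rangle e_l+\langle e_l,a\rangle e_k$ with $k\neq l$.
\begin{itemize}
\item $X$ kills $Z$, so $Z$, $F$ and $F^\perp_+$ are unchanged, and $\exp(\epsilon X)=1+\epsilon X$ exactly.
\item The transported triple still satisfies all four displayed conditions.
\item However, for the transported lattice the conjugate of $Z/F$ is $(1+\epsilon X)F^\perp_+\neq F^\perp_+$, because $Xh_k=e_l$.
\end{itemize}
So in your third summand the transported triple is not a point of $\widehat{\mathcal A}_g$, and the vector field you want to use there is not defined.

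\textbf{The $\Omega$-mechanism.} Relatedly, the mechanism you give for $\Omega$ is not the right one. The projection onto $Z/F$ is along $F^\perp_+$, so the underlying PPAV is $(F^\perp/F)/(F^\perp_++L)$. Moving $Z/F$ relative to $L$ therefore does not change $\Omega$ at all; $\Omega$ changes when $L$ moves relative to $F^\perp_+$. For the $X$ above, $Z/F$ does not move.

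What is missing is:
\begin{itemize}
\item a definition of the action of such elements on the lattice datum that respects the compatibility (for instance directly on the coordinates $(\varphi,\bm h,\Omega)$, with $\dot\Omega$ governed by the $\mathrm{Sym}^2(Z/F)$-component of $X$);
\item a check that this is a Lie algebra action;
\item a redo of Step~2 for that action rather than for naive transport.
\end{itemize}

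\textbf{Smaller points.}
\begin{itemize}
\item $\exp(\epsilon X)$ need not converge on $H'$: for $X=L_{-2}$, $\exp(\epsilon X)\,t$ has infinitely many negative powers. You should work over $\mathbb C[\epsilon]/(\epsilon^2)$ instead of invoking openness.
\item $L_{\mathbb R}$ is a real form of $F^\perp/F$, not all of it. The reason an element of $\mathfrak{sp}(F^\perp/F)$ vanishing on $L$ is zero is that $L$ spans $F^\perp/F$ over $\mathbb C$.
\item In (ii) the fibre has no zero section, so the stabilizer at $(Z,F,L,\bar h,q)$ is $\{(X,v)\colon X\in\mathfrak{sp}_F(H'),\ v+Xh\in F\}$. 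This is a conjugate of $\mathfrak{sp}_F(H')\ltimes F$, which suffices for the quotient isomorphism but is not literally that subalgebra.
\end{itemize}
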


Furthermore, the uniformizations of Theorem \ref{thm:spspHunif} extend to suitable line bundles as follows.
First we define the required Lie algebra extensions.
Let $\mathscr{H}$ be the \textit{Weyl algebra}. This is the completion of the universal enveloping algebra of the Heisenberg algebra $H$ with respect to the topology in which the basis of open neighborhoods of $0$ is formed by the left ideals of the subspaces
$t^N\mathbb{C}\llbracket t \rrbracket$, for $N\in \mathbb{Z}$,
modulo the two-sided ideal generated by $(1-\bm{1})$. Here $1$ is the multiplicative identity element, and $\bm{1}$ is the central element of $H$. Elements of $\mathscr{H}$ are (possibly infinite) series of the form 
\[
C^0 + \sum_{i\geq 1} C^i \,b_i
\]
with $C^i$ for $i \geq 0$ equal to a finite linear combination of finite tensor products of elements $b_j$ for $j\in\mathbb{Z}$.

The Weyl algebra has a natural filtration by subspaces \mbox{$\mathscr{H}_0\subset \mathscr{H}_1 \subset \cdots$,} where $\mathscr{H}_n$ is the subspace of tensors of degree at most $n$. E.g, 
\[
\mathscr{H}_0\cong\mathbb{C}\,\bm{1}, \qquad \mathscr{H}_1/\mathscr{H}_0\cong H', \qquad \mathscr{H}_2/\mathscr{H}_1\cong \mathfrak{sp} \left( H' \right).
\]
One has \sloppy{$[\mathscr{H}_n, \mathscr{H}_m]\subseteq \mathscr{H}_{n+m-2}$.} In particular, $\mathscr{H}_2$ is remarkably a Lie subalgebra of $\mathscr{H}$.
This is a central extension 
\begin{equation}
\label{eq:H2}
0 \rightarrow \mathbb{C}\,\bm{1}\rightarrow \mathscr{H}_2 \rightarrow \mathfrak{sp} \left( H' \right)\ltimes H' \rightarrow 0.
\end{equation}
Restricting over $\mathfrak{sp} \left( H' \right)$, one has a universal central extension
\begin{equation}
\label{eq:mp}
0 \rightarrow \mathbb{C}\,\bm{1}\rightarrow \mathfrak{mp} \left( H' \right) \rightarrow \mathfrak{sp} \left( H' \right)\rightarrow 0
\end{equation}
which restricts over $\mathrm{Witt}\subset  \mathfrak{sp} \left( H' \right)$ to the Virasoro extension $\mathrm{Vir}$ from \eqref{eq:Vir}. The universality follows from $H^2(\mathfrak{sp} \left( H' \right),\mathbb{C})\cong \mathbb{C}$ \cite[Prop.~5.1]{avva}.

Let $\Lambda$ be the pull-back to $\widehat{\mathcal{A}}_g$ of the Hodge line bundle on $\mathcal{A}_g$ via the map $\widehat{\mathcal{A}}_g\rightarrow\mathcal{A}_g$, and $\Xi$ be the pull-back to $\widehat{\mathcal{X}}_g$ of the canonical line bundle on $\mathcal{X}_g$ via the map $\widehat{\mathcal{X}}_g\rightarrow\mathcal{X}_g$.

\begin{theorem}[\cite{avva}]
\begin{enumerate}[(i)]
\item For $g \geq 3$, the Lie algebra $\mathfrak{mp} \left( H' \right)$ acts on the line bundle $\Lambda$ on $\widehat{\mathcal{A}}_g$ by first-order differential operators extending the transitive action of \sloppy\mbox{$\mathfrak{sp} \left( H' \right)$} on  $\widehat{\mathcal{A}}_g$ and with the central element $\bm{1}$ in $\mathfrak{mp} \left( H' \right)$ acting as multiplication by $2$ on the fibers of $\Lambda\rightarrow \widehat{\mathcal{A}}_g$.

\smallskip

\item For $g \geq 3$, the Lie algebra $\mathscr{H}_{2}(H)$ acts on the line bundle $\Xi$ on $\widehat{\mathcal{X}}_g$ by first-order differential operators  extending the transitive action of \sloppy\mbox{$\mathfrak{sp} \left( H' \right)\ltimes H'$} on  $\widehat{\mathcal{X}}_g$ and with the central element $\bm{1}\in \mathscr{H}_2$ acting as multiplication by $-2$ on the fibers of $\Xi\rightarrow \widehat{\mathcal{X}}_g$.
\end{enumerate}
\end{theorem}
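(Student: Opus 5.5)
The plan is to realize the uniformizations geometrically, following the Witt/Virasoro strategy of \cite{adkp, besh, kvir} but with $\mathfrak{sp}(H')$ in place of $\mathrm{Witt}$, and to use the universality of \eqref{eq:mp} to pin down the central charge. For (i), Theorem \ref{thm:spspHunif}(i) already gives an infinitesimal transitive action of $\mathfrak{sp}(H')$ on $\widehat{\mathcal{A}}_g$ with stabilizer $\mathfrak{sp}_F(H')$. The first step is to lift this to an action on $\Lambda$ by first-order differential operators. I would use the fiber description $\Lambda_{(Z,F,L)}\cong\det(Z/F)^{*}\cong\det H^0(X,\Omega^1_X)$ coming from \eqref{eq:Z/F}. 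For $X\in\mathfrak{sp}(H')$, the tangent vector moves $Z$ to $Z_\epsilon=(1+\epsilon X)Z$ and $F$ to $(1+\epsilon X)F$. The map $1+\epsilon X$ induces an isomorphism $Z/F\to Z_\epsilon/F_\epsilon$ and hence an isomorphism of determinant lines, which defines a lift $\widetilde X$. The difficulty is that this naive lift depends on choices of splittings, because $Z$ is infinite dimensional while $Z/F$ is not. I would therefore define the lift through the relative determinant of $Z$ against the fixed complement $H'_+$, in the spirit of Sato--Segal--Wilson, and then correct it by the determinant of $Z/F$. The result is a map $\mathfrak{sp}(H')\to\mathcal{A}_\Lambda$ (Atiyah algebra) that is a homomorphism only up to a $2$-cocycle. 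This gives a Lie algebra map from some central extension of $\mathfrak{sp}(H')$ to $\mathcal{A}_\Lambda$ with the center acting by scalars.

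The second step identifies this extension with $\mathfrak{mp}(H')$. Since $H^2(\mathfrak{sp}(H'),\mathbb{C})\cong\mathbb{C}$ by \cite[Prop.~5.1]{avva}, the cocycle is a scalar multiple $c$ of the one defining \eqref{eq:mp}. To find $c$, I restrict along $\mathrm{Witt}\subset\mathfrak{sp}(H')$ and pull back along the extended Torelli map \eqref{eq:extTormap}. There $\mathfrak{mp}$ restricts to $\mathrm{Vir}$, and Theorem \ref{thm:WittVirunif}(i) says $\bm{1}$ acts by $2$ on $\Lambda$ over $\widehat{\mathcal{M}}_g$, so $c=2$. For this comparison I need that the image of $\mathrm{Witt}$ is nontrivial on the cocycle, which holds since the Virasoro class is nonzero. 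I also need that the two actions on $\Lambda|_{\widehat{\mathcal{M}}_g}$ agree. This follows because both are determined by the same determinant-of-cohomology construction.

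The hypothesis $g\ge 3$ enters at the step I expect to be the main obstacle: showing the lift is well defined and compatible with the stabilizer. The elements of $\mathfrak{sp}_F(H')$ fix the point and must act on the fiber of $\Lambda$ by the correct character, which should be zero after normalization. Any ambiguity in the lift is a character of $\mathfrak{sp}(H')$ or a $1$-cocycle with values in functions. I would kill it using $H^1(\mathfrak{sp}(H'),\mathbb{C})=0$ (perfectness of $\mathfrak{sp}(H')$) together with the vanishing of $H^1$ of $\mathfrak{sp}(2g,\mathbb{Z})$-type arithmetic groups. That vanishing holds for $g\ge3$ by rigidity, and it is needed for the local lift to descend through the $\mathrm{Sp}(2g,\mathbb{Z})$ quotient of $\widehat{\mathcal{H}}_g$.

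For (ii) the argument is parallel. The line bundle $\Xi$ restricted to fibers is $2\Theta$, and the translation part $H'$ of $\mathfrak{sp}(H')\ltimes H'$ lifts via the classical Heisenberg action on theta functions. This gives the extension $\mathscr{H}_2$ of \eqref{eq:H2}, and the sign $-2$ is read off by comparing with Theorem \ref{thm:WittVirunif}(ii) over $\widehat{\mathcal{P}}_g$.
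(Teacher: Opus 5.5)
The paper does not prove this statement. It is quoted from \cite{avva}, so there is no in-paper argument to compare against. On its own terms, your determinant-line strategy is a reasonable alternative in outline, but the construction step as written does not work.

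\textbf{The naive lift cannot exist, so your construction must avoid transporting $Z$.} For $S\in\mathfrak{sp}(H')$, the map $1+\epsilon S\colon Z/F\to Z_\epsilon/F_\epsilon$ involves no choice of splitting, so choice-dependence is not the problem. The problem is that such a lift is impossible.
\begin{itemize}
\item An element of $\mathfrak{sp}_F(H')$ sends $Z\subseteq F^\perp$ into $F$, hence acts by $0$ on $Z/F$.
\item The naive transport comes from an action on pairs $(Z,v)$, so it is an honest Lie algebra action.
\item You would therefore get a Lie algebroid splitting $\mathscr{T}_{\widehat{\mathcal{A}}_g}\to\mathscr{F}_\Lambda$, i.e.\ a flat connection on $\Lambda$.
\item This is impossible: $\widehat{\mathcal{A}}_g$ is rationally homotopy equivalent to $\mathcal{A}_g$, and $c_1(\lambda)\neq 0$ in $H^2(\mathcal{A}_g,\mathbb{Q})$ for $g\ge 3$.
\end{itemize}
So transporting $Z$ by $1+\epsilon S$ is not compatible with the holomorphic identification $Z/F\cong H^0(X,\Omega^1_X)^*$ over $\widehat{\mathcal{A}}_g$. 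Your remedy, the Sato--Segal--Wilson determinant of $Z$ against $H'_+$ ``corrected by $\det(Z/F)$'', reuses exactly that transport.

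The fix is to bypass $Z$. Since $F\cap H'_+=0$, the determinant of $F\oplus H'_+\to H'$ has fibre $\det\bigl(H'/(F+H'_+)\bigr)^{-1}\cong\det(Z/F)^{-1}\cong\Lambda_a$. You can then pull back the Sato--Segal--Wilson bundle along the equivariant map $a\mapsto F$. Elements of $\mathfrak{sp}(H')$ lie in the restricted algebra for $H'=H_-\oplus H'_+$, because only finitely many $b_ib_j$ with $i,j<0$ occur, so the restricted cocycle is defined. By $H^2(\mathfrak{sp}(H'),\mathbb{C})\cong\mathbb{C}$ it is a multiple of the $\mathfrak{mp}$ cocycle. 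The multiple can be computed directly on a pair such as $b_1^2, b_{-1}^2$, without using the Torelli locus.

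\textbf{The hypothesis $g\ge 3$ does not enter where you put it.} $H^1(\mathrm{Sp}(2g,\mathbb{Z}),\mathbb{C})=0$ for every $g\ge 1$, since the abelianizations are finite. Moreover, a lift pulled back from a canonical bundle along an $\mathrm{Sp}(2g,\mathbb{Z})$-invariant map needs no descent at all. The hypothesis matters in degree two:
\begin{itemize}
\item for $g\ge 3$, $\lambda$ spans $H^2(\,\cdot\,,\mathbb{Q})$ of both $\mathcal{A}_g$ and $\mathcal{M}_g$;
\item by contrast, $\lambda$ is torsion on $\mathcal{M}_2$.
\end{itemize}
Hence your Torelli comparison pins down the constant only for $g\ge 3$, unless you actually prove that the two lifts agree on $\widehat{\mathcal{M}}_g$; you only assert this. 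This is presumably also why the cited result is stated for $g\ge 3$.

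\textbf{Part (ii) is not parallel to (i).} $H^2(\mathfrak{sp}(H')\ltimes H',\mathbb{C})$ is at least two-dimensional: the $\mathfrak{mp}$ class and the form $\langle\,,\rangle$ on $H'$ extended by zero are independent cocycles. To get an action of $\mathscr{H}_2$, you must show that two levels both equal $-2$:
\begin{itemize}
\item the Heisenberg level coming from the fibrewise theta structure of $\Xi$;
\item the metaplectic level in the $\widehat{\mathcal{A}}_g$-directions.
\end{itemize}
That is what acting through the single central element of $\mathscr{H}_2$ means. You also have not said how the $\mathfrak{sp}(H')$-part lifts to $\Xi$ at all.
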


\begin{figure}[t]
\begin{tikzcd}[column sep=0.5em]
&&&\mathscr{O}_{\widehat{\mathcal{A}}_g} \arrow[hookrightarrow]{dd} \arrow[rightarrow]{rr}[near start]{\frac{1}{2}} && \mathscr{O}_{\widehat{\mathcal{A}}_g} \arrow[hookrightarrow]{dd}\\
&&\mathscr{O}_{\widehat{\mathcal{M}}_g} \arrow[hookrightarrow]{ur} \arrow[rightarrow, crossing over]{rr}[near start]{\frac{1}{2}} \arrow[hook]{dd}&&\mathscr{O}_{\widehat{\mathcal{M}}_g} \arrow[hookrightarrow]{ur} \arrow[hookrightarrow]{dd}&\\
&\mathfrak{sp}^{\mathrm{out}} \left( H'\right) \arrow[rightarrow]{dd}[above, near start, rotate=-90]{=} \arrow[hookrightarrow]{rr} && \mathfrak{mp} \left( H' \right)\widehat{\otimes}_\mathbb{C}\, \mathscr{O}_{\widehat{\mathcal{A}}_g} \arrow[->>]{dd} \arrow[->>]{rr} && \mathscr{F}_{\Lambda} \arrow[->>]{dd}\\
\mathrm{Ker}(\alpha) \arrow[hookrightarrow]{ur} \arrow[rightarrow]{dd}[above, near start, rotate=-90]{=} \arrow[hookrightarrow, crossing over]{rr}&& \mathrm{Vir} \,\widehat{\otimes}_\mathbb{C}\, \mathscr{O}_{\widehat{\mathcal{M}}_g} \arrow[hookrightarrow]{ur} \arrow[crossing over,dash,shorten <= 3mm,shorten >= 3mm]{uu} \arrow[->>, crossing over]{rr} && \mathscr{F}_{\Lambda|\widehat{\mathcal{M}}_g} \arrow[hookrightarrow]{ur} \arrow[crossing over,dash,shorten <= 3mm,shorten >= 3mm]{uu}&\\
&\mathfrak{sp}^{\mathrm{out}} \left( H' \right) \arrow[hookrightarrow]{rr} && \mathfrak{sp} \left( H' \right) \widehat{\otimes}_\mathbb{C}\,\mathscr{O}_{\widehat{\mathcal{A}}_g} \arrow[->>]{rr} && \mathscr{T}_{\widehat{\mathcal{A}}_g}\\
\mathrm{Ker}(\alpha) \arrow[hookrightarrow]{ur} \arrow[hookrightarrow]{rr} && \mathrm{Witt} \,\widehat{\otimes}_\mathbb{C}\,\mathscr{O}_{\widehat{\mathcal{M}}_g} \arrow[hookrightarrow]{ur} \arrow[<<-, crossing over]{uu} \arrow[->>]{rr}{\alpha} &&\mathscr{T}_{\widehat{\mathcal{M}}_g}. \arrow[hookrightarrow]{ur} \arrow[<<-, crossing over]{uu}&
\end{tikzcd}
\caption{The Atiyah algebra of $\Lambda$ on $\widehat{\mathcal{M}}_g$ (front) and $\widehat{\mathcal{A}}_g$ (back) with corresponding exact sequences.}
\label{fig:bigAtiyahLambdadiag}
\end{figure}
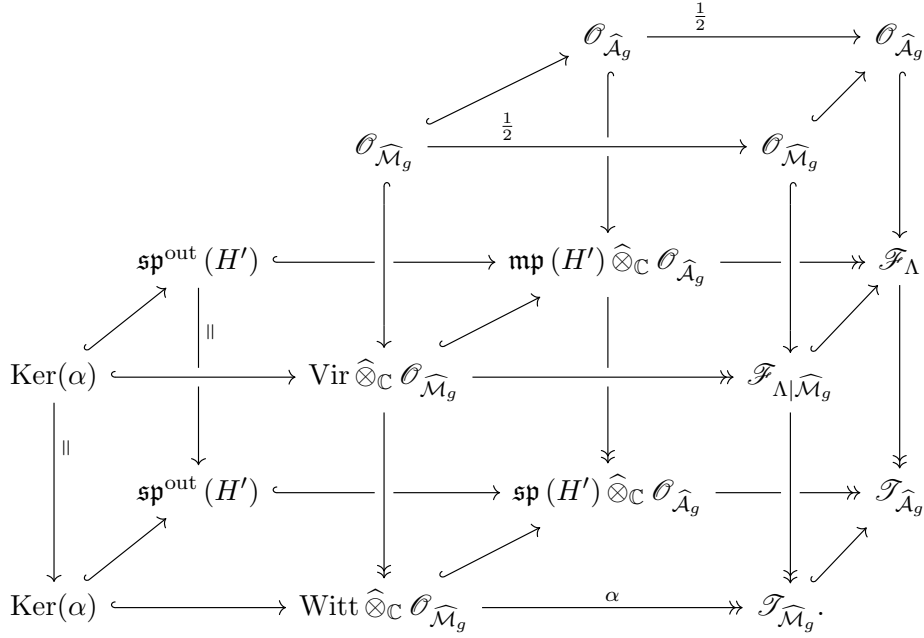

\begin{figure}[t]
\begin{adjustbox}{max width=\textwidth}
\begin{tikzcd}[column sep=-0.5em]
&&&\mathscr{O}_{\widehat{\mathcal{X}}_g} \arrow[hookrightarrow]{dd} \arrow[rightarrow]{rr}[near start]{-\frac{1}{2}} && \mathscr{O}_{\widehat{\mathcal{X}}_g} \arrow[hookrightarrow]{dd}\\
&&\mathscr{O}_{\widehat{\mathcal{P}}_g} \arrow[hookrightarrow]{ur} \arrow[rightarrow, crossing over]{rr}[near start]{-\frac{1}{2}} \arrow[hook]{dd}&&\mathscr{O}_{\widehat{\mathcal{P}}_g} \arrow[hookrightarrow]{ur} \arrow[hookrightarrow]{dd}&\\
&\mathfrak{sp}^{\mathrm{out}} \left( H' \right) \ltimes \mathbb{F} \arrow[rightarrow]{dd}[above, near start, rotate=-90]{=} \arrow[hookrightarrow]{rr} && \mathscr{H}_2 \,\widehat{\otimes}_\mathbb{C}\, \mathscr{O}_{\widehat{\mathcal{X}}_g} \arrow[->>]{dd} \arrow[->>]{rr} && \mathscr{F}_{\Xi} \arrow[->>]{dd}\\
\mathrm{Ker}(\alpha) \arrow[hookrightarrow]{ur} \arrow[rightarrow]{dd}[above, near start, rotate=-90]{=} \arrow[hookrightarrow, crossing over]{rr}&& \mathfrak{D} \,\widehat{\otimes}_\mathbb{C}\, \mathscr{O}_{\widehat{\mathcal{P}}_g}\arrow[hookrightarrow]{ur} \arrow[crossing over,dash,shorten <= 3mm,shorten >= 3mm]{uu} \arrow[->>, crossing over]{rr} && \mathscr{F}_{\Xi|\widehat{\mathcal{P}}_g} \arrow[hookrightarrow]{ur} \arrow[crossing over,dash,shorten <= 3mm,shorten >= 3mm]{uu}&\\
&\mathfrak{sp}^{\mathrm{out}} \left( H' \right) \ltimes \mathbb{F} \arrow[hookrightarrow]{rr} && \left(\mathfrak{sp} \left( H' \right) \ltimes H'\right)\widehat{\otimes}_\mathbb{C}\, \mathscr{O}_{\widehat{\mathcal{X}}_g} \arrow[->>]{rr} && \mathscr{T}_{\widehat{\mathcal{X}}_g}\\
\mathrm{Ker}(\alpha) \arrow[hookrightarrow]{ur} \arrow[hookrightarrow]{rr} && \left(\mathrm{Witt} \ltimes H'\right) \widehat{\otimes}_\mathbb{C}\,\mathscr{O}_{\widehat{\mathcal{P}}_g} \arrow[hookrightarrow]{ur} \arrow[<<-, crossing over]{uu} \arrow[->>]{rr}{\alpha} &&\mathscr{T}_{\widehat{\mathcal{P}}_g}. \arrow[hookrightarrow]{ur} \arrow[<<-, crossing over]{uu}&
\end{tikzcd}
\end{adjustbox}
\caption{The Atiyah algebra of $\Xi$ on $\widehat{\mathcal{P}}_g$ (front) and $\widehat{\mathcal{X}}_g$ (back) with corresponding exact sequences.}
\label{fig:bigAtiyahXidiag}
\end{figure}

The uniformizations reviewed here are summarized by the commutative diagrams 
in Figures \ref{fig:bigAtiyahLambdadiag} and \ref{fig:bigAtiyahXidiag} 
involving the Atiyah algebras $\mathscr{F}_\Lambda$ and $\mathscr{F}_\Xi$ of the line bundles $\Lambda$ and $\Xi$, respectively.
We use there the following notation:
\begin{equation}
\label{eq:spout}
\mathfrak{sp}^{\mathrm{out}} \left( H'\right) \subset \mathfrak{sp}\left( H'\right) \,\widehat{\otimes}_\mathbb{C}\, \mathscr{O}_{\widehat{\mathcal{A}}_g}
\end{equation}
is the \textit{sheaf of isotropy Lie subalgebras} whose fiber at $(Z,F,L)$ in $\widehat{\mathcal{A}}_g$ is $\mathfrak{sp}_F\left( H'\right)$. This sheaf can be constructed as follows.
Let 
\begin{equation}
\label{eq:universalF}
\mathbb{F}\rightarrow\widehat{\mathcal{A}}_g
\end{equation}
be the tautological sheaf whose fiber at $(Z,F,L)$ is $F$. Then
\begin{equation*}
\mathfrak{sp}^{\mathrm{out}} \left( H'\right) = \mathbb{F} \,\widehat{\otimes}_{\mathbb{C}}\, H'.
\end{equation*}
From \cite[Prop.~5.3]{avva}, one has a splitting
\[
\mathfrak{sp}^{\mathrm{out}} \left( H'\right) \hookrightarrow \mathfrak{mp} \left( H' \right)\widehat{\otimes}_\mathbb{C}\, \mathscr{O}_{\widehat{\mathcal{A}}_g}, \qquad fh \mapsto :\! fh \!:
\]
where $:\,\, :$ is the usual normal ordering.
Finally, $\mathbb{F}\rightarrow\widehat{\mathcal{X}}_g$ is the pullback of \eqref{eq:universalF} to $\widehat{\mathcal{X}}_g$, that is, the tautological sheaf whose fiber at $(Z,F,L,\bar{h},q)$ is~$F$.

\section{Heisenberg vertex algebras}
\label{sec:HVOA}

After reviewing Heisenberg vertex algebras following \cite[\S 3.6]{KacBeginners} and \cite[\S\S 2, 5.4.1]{bzf}, we define an action of the metaplectic algebra on them.

\subsection{Heisenberg algebras}
Let $\Gamma$ be a lattice, that is, a free abelian group equipped with a symmetric bilinear form $(,) \colon \Gamma\times \Gamma\rightarrow \mathbb{Z}$. Assume that $\Gamma$ has \textit{finite rank} and is \textit{even} --- i.e., $(\gamma,\gamma)\in 2\mathbb{Z}$ for all $\gamma\in \Gamma$ --- and \textit{positive definite} --- i.e., $(\gamma,\gamma)>0$ for all $\gamma\in \Gamma\setminus \{0\}$.

Let $\mathfrak{h}_\Gamma:=\Gamma\otimes_{\mathbb{Z}}\mathbb{C}$ and extend the bilinear form of $\Gamma$ to $\mathfrak{h}_\Gamma$ linearly.
Consider the loop space $\mathfrak{h}_\Gamma((t)):= \mathfrak{h}_\Gamma\otimes_{\mathbb{C}}\mathbb{C}((t))$ with trivial Lie bracket. 
The \textit{Heisenberg algebra} $\widehat{\mathfrak{h}}_\Gamma$ is the central extension 
\[
0\rightarrow \mathbb{C}\bm{1} \rightarrow \widehat{\mathfrak{h}}_\Gamma \rightarrow \mathfrak{h}_\Gamma((t)) \rightarrow 0
\]
defined by the Lie bracket 
\[
\left[ A\otimes f(t), B\otimes g(t)\right] = -(A,B) \left(\mathop{\mathrm{Res}}_{t=0} f dg\right) \bm{1}
\]
for $A\otimes f(t), B\otimes g(t)\in \mathfrak{h}_\Gamma((t))$. Explicitly, for $A_n:= A\otimes t^n$ and $B_m:= B\otimes t^m$ in $\mathfrak{h}_\Gamma((t))$, one has
\[
\left[A_n, B_m\right] = (A,B)\,n\,\delta_{n,-m}\,\bm{1}
\]
where $\delta_{n,-m}=1$ when $n=-m$, and $\delta_{n,-m}=0$ otherwise.

\subsection{Weyl algebras}
Let $\mathscr{H}_\Gamma$ be the \textit{Weyl algebra} assigned to the lattice $\Gamma$. That is, $\mathscr{H}_\Gamma$ is the completion of the universal enveloping algebra of $\widehat{\mathfrak{h}}_\Gamma$ with respect to the topology in which the basis of open neighborhoods of $0$ is formed by the left ideals of the subspaces
\[
\mathfrak{h}_\Gamma\,\widehat{\otimes}_{\mathbb{C}}\, t^N\mathbb{C}\llbracket t \rrbracket
 \qquad \mbox{for $N\in \mathbb{Z}$}
\]
modulo the two-sided ideal generated by $(1-\bm{1})$. Here $1$ is the multiplicative identity element, and $\bm{1}$ is the central element of $\widehat{\mathfrak{h}}_\Gamma$.
Elements of $\mathscr{H}_\Gamma$ are (possibly infinite) series of the form 
\[
C^0 + \sum_{i\geq 1} C^i A^i \qquad \mbox{with} \qquad
A^i=B^i\otimes t^i\in \mathfrak{h}_\Gamma\otimes_{\mathbb{C}}\mathbb{C}[ t]
\]
and $C^i$ for $i \geq 0$ equal to a finite linear combination of finite tensor products of elements in $\mathfrak{h}_\Gamma\otimes_{\mathbb{C}}\mathbb{C}[ t, t^{-1}]$.

\subsection{Heisenberg vertex operator algebras}
\label{sec:Hvoa}
Let $\mathscr{H}_\Gamma^+$ be the commutative subalgebra of $\mathscr{H}_\Gamma$ generated by $\mathfrak{h}_\Gamma\otimes_{\mathbb{C}}\mathbb{C}\llbracket t\rrbracket$.
The \textit{Fock representation} $\pi_\Gamma$ of $\mathscr{H}_\Gamma$ is defined as
\[
\pi_\Gamma := \mathscr{H}_\Gamma \otimes_{\mathscr{H}_\Gamma^+} \mathbb{C}\bm{1}
\]
where $\mathscr{H}_\Gamma^+$ acts trivially on $\mathbb{C}\bm{1}$ --- that is, elements of $\mathfrak{h}_\Gamma\otimes_{\mathbb{C}}\mathbb{C}\llbracket t\rrbracket$ act by $0$ and ${1}$ acts by the identity.
Let $\mathscr{H}_\Gamma^-$ be the commutative subalgebra of $\mathscr{H}_\Gamma$ generated by \mbox{$\mathfrak{h}_\Gamma\otimes_{\mathbb{C}}t^{-1}\mathbb{C}[ t^{-1}]$.} By the Poincar\'e--Birkhoff--Witt theorem, one has an isomorphism of vector spaces $\mathscr{H}_\Gamma\cong \mathscr{H}_\Gamma^-\otimes_{\mathbb{C}}\mathscr{H}_\Gamma^+$ and thus 
\[
\pi_\Gamma\cong \mathscr{H}_\Gamma^- \qquad \mbox{(as vector spaces).}
\]
The standard gradation on $\pi_\Gamma$ is the one induced by $\deg(A_n)=-n$.

The Fock representation $\pi_\Gamma$ carries a vertex algebra structure. The vertex operator of an element of type $A_{-1}\,\bm{1}$ in $\pi_\Gamma$ is 
\[
A(t):=\sum_{i\in\mathbb{Z}} A_i \, t^{-i-1}.
\]
By the Strong Reconstruction Theorem \cite[\S 4.4.1]{bzf}, this extends uniquely to a vertex algebra structure on $\pi_\Gamma$.
The resulting vertex operator of an arbitrary elements of $\pi_\Gamma$
\[
A^{i_1}\cdots A^{i_k}\,\bm{1} \in \pi_\Gamma \qquad \mbox{with}\quad A^{i_j}=B^{i_j}\otimes t^{i_j}\in \mathfrak{h}_\Gamma\otimes_{\mathbb{C}}t^{-1}\mathbb{C}\left[ t^{-1}\right]
\] 
is
\[
\frac{1}{(-i_1-1)!\dots(-i_k-1)!} :\! \partial_t^{-i_1-1} B^{i_1}(t)\cdots \partial_t^{-i_k-1} B^{i_k}(t) \!:
\]
where $:\,\, :$ is the usual normal ordering.

Moreover, there is a family of vertex operator algebra structures on $\pi_\Gamma$.
To see this, let $\{H^1, \dots, H^d\}$ be an orthonormal basis of $\mathfrak{h}_\Gamma$ with $d=\mathrm{rank}(\Gamma)$.
Then for $A\in\mathfrak{h}_\Gamma$, the vector
\[
\omega^A:= \frac{1}{2}\sum_{i=1}^d H^i_{-1}H^i_{-1}\,\bm{1} + A_{-2}\,\bm{1} 
\]
is a conformal vector with central charge $c^A:= d-12(A,A)$ endowing $\pi_\Gamma$  with a vertex operator algebra structure \cite[\S3]{chu2024classification}. 
The induced action of the Virasoro algebra is given by
\begin{equation}
\label{eq:Viraction}
\mathrm{Vir}\rightarrow \mathrm{End}(\pi_\Gamma), \qquad \bm{1}\mapsto c^A \,\mathrm{id}_{\pi_\Gamma},
\quad L_p \mapsto \frac{1}{2}\sum_{n\in \mathbb{Z}}\sum_{i=1}^d :\! H^i_n H^i_{p-n} \!: - (p+1)A_{p}.
\end{equation}
We will refer to $\omega^A$ with $A=0$ as the \textit{standard} conformal vector.

Even more conformal vectors could be considered. See for instance
\cite{matsuo1999note}, where all conformal vectors of $\pi_\Gamma$ are classified in case $\Gamma$ has rank one. The conformal vectors of type $\omega^A$ are the \textit{unique} ones that preserve the standard gradation of $\pi_\Gamma$, see \cite[Rmk 3.1]{matsuo1999note}, \cite[Lemma 3.1]{chu2024classification}.

We will also consider the \textit{Heisenberg vertex algebra of level $0$}
\[
\pi_\Gamma^0 := \mathscr{H}_\Gamma \otimes_{\mathscr{H}_\Gamma^+} \mathbb{C}\bm{1}
\]
where $1\in \mathscr{H}_\Gamma^+$ acts on $\mathbb{C}\bm{1}$ by $0$, as do elements of $\mathfrak{h}_\Gamma\otimes_{\mathbb{C}}\mathbb{C}\llbracket t\rrbracket$.
This is the \textit{commutative} vertex algebra corresponding to the commutative algebra $\mathscr{H}_\Gamma^-$ \cite[\S 2.3.9]{bzf}.
That is, one has
\begin{equation}
\label{eq:piGamma0}
\pi_\Gamma^0 
\cong \mathrm{Sym}\left( \mathfrak{h}_\Gamma\otimes_{\mathbb{C}}t^{-1}\mathbb{C}[ t^{-1}] \right)
 \qquad \mbox{(as vertex algebras).}
\end{equation}
Also in this case, we will refer to $\omega^A$ with $A=0$ as the \textit{standard} conformal vector of $\pi_\Gamma^0$.

One could define similarly the Heisenberg vertex algebra $\pi_\Gamma^\ell$ of level $\ell$ for arbitrary $\ell\in\mathbb{C}$. 
Thus the vertex algebra $\pi_\Gamma=\pi_\Gamma^1$ can be seen as a deformation of the commutative algebra $\pi_\Gamma^0$.
On the other hand, one has $\pi_\Gamma^\ell\cong \pi_\Gamma$ when $\ell\neq 0$ \cite[\S2.4.3]{bzf}. Thus we will only consider $\pi_\Gamma$ and $\pi_\Gamma^0$.

\subsection{Metaplectic action}
\label{sec:mpaction}
Let $V$ be a Heisenberg vertex algebra, i.e., $V$ is $\pi_\Gamma$ or $\pi_\Gamma^0$ for some lattice $\Gamma$ as above.
Let $\{H^1, \dots, H^d\}$ be an orthonormal basis of $\mathfrak{h}_\Gamma$. Recall the Lie algebras $\mathscr{H}_2$
from \eqref{eq:H2} and $\mathfrak{mp}\left(H' \right)$ from \eqref{eq:mp}.

\begin{lemma}
\label{lemma:mpaction}
\begin{enumerate}[(i)]
\item The Lie algebra $\mathscr{H}_2$ acts on $V$ via the map
\[
\mathscr{H}_2 \rightarrow \mathrm{End}(V), \qquad 
\bm{1}\mapsto d\,\mathrm{id}_V, \quad
b_mb_n \mapsto \sum_{i=1}^d H^i_mH^i_n \quad
b_k \mapsto \sum_{i=1}^d H^i_k
\]

\smallskip

\item This $\mathscr{H}_2$-action restricts to an action of the Lie algebra $\mathfrak{mp}\left(H' \right)$ on $V$:
\[
\mathfrak{mp}\left(H' \right) \rightarrow \mathrm{End}(V), \qquad \bm{1}\mapsto d\,\mathrm{id}_V, \quad
b_mb_n \mapsto \sum_{i=1}^d H^i_mH^i_n.
\]

\smallskip

\item This $\mathfrak{mp}\left(H' \right)$-action restricts to an action of the Virasoro algebra $\mathrm{Vir}$ on $V$ of central charge $d$:
\[
\mathrm{Vir}\rightarrow \mathrm{End}(V), \qquad \bm{1}\mapsto d \,\mathrm{id}_{V},
\quad L_p \mapsto \frac{1}{2}\sum_{n\in \mathbb{Z}}\sum_{i=1}^d :\! H^i_n H^i_{p-n} \!:.
\]

\smallskip

\item This Virasoro action coincides with the one induced by the standard conformal vector of $V$.
\end{enumerate}
\end{lemma}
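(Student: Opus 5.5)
The plan is to reduce everything to the observation that the assignment $b_k\mapsto \sum_i H^i_k$ is a Lie algebra map on the Heisenberg level, up to the scaling of the central element. Then $\mathscr{H}_2$, being built from degree $\le 2$ tensors in the $b$'s, maps compatibly into the completed Weyl algebra $\mathscr{H}_\Gamma$, which acts on $V$.

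First I would check the Heisenberg level directly. Since $[b_m,b_n]=\langle t^m,t^n\rangle\bm{1}=m\,\delta_{m,-n}\bm{1}$ and $\{H^i\}$ is orthonormal,
\[
\Bigl[\sum_i H^i_m,\sum_j H^j_n\Bigr]=\sum_i m\,\delta_{m,-n}\bm{1}.
\]
On $\pi_\Gamma$, where $\bm{1}$ acts by $1$, this is $d\,m\,\delta_{m,-n}$. That is consistent with $\bm{1}\mapsto d$ for the linear part $b_k\mapsto\sum_i H^i_k$. On $\pi^0_\Gamma$ the $H$'s commute, so the bracket is $0$; the statement then needs care and should be read with the brackets computed in $\mathscr{H}_\Gamma$ at the appropriate level, so I would treat the two cases in parallel.

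The main step, and the main obstacle, is the quadratic part $b_mb_n\mapsto Q_{mn}:=\sum_i H^i_mH^i_n$. The issue is that $\mathscr{H}_2$ is not generated as a Lie algebra by $H$; its elements are quadratic tensors in the $b$'s, and the proposed map is not induced by an algebra map $\mathscr{H}\to\mathscr{H}_\Gamma$. I would verify the defining brackets directly, using $[xy,z]=x[y,z]+[x,z]y$ for $x,y,z$ of degree one:
\[
[Q_{mn},H^j_k]=H^j_m\,n\,\delta_{n,-k}+m\,\delta_{m,-k}H^j_n .
\]
Summing over $j$ reproduces the image of $[b_mb_n,b_k]=n\delta_{n,-k}b_m+m\delta_{m,-k}b_n$. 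Note that no factor of $d$ appears here, because the contraction is diagonal.

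Next I would compute $[Q_{mn},Q_{pq}]$ by the Leibniz rule. It gives $\sum_i$ of (linear in $H^i$)$\times$(linear in $H^i$) times scalars, and matches $\sum_i$ applied to $[b_mb_n,b_pb_q]\in\mathscr{H}_2$. The exception is reordering terms: rewriting $H^i_aH^i_b=H^i_bH^i_a+a\delta_{a,-b}$ produces scalars that are summed over $i$, hence multiplied by $d$. This is exactly where $\bm{1}\mapsto d$ is forced, and it is consistent because $b_ab_b-b_bb_a=a\delta_{a,-b}\bm{1}$ in $\mathscr{H}_2$. Convergence on $V$ is handled by noting that any $v\in V$ is annihilated by $H^i_k$ for $k\gg0$, so the infinite series defining elements of $\mathscr{H}_2$ act by finite sums. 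Continuity of the map with respect to the $t$-adic topologies gives well-definedness on the completion.

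Parts (ii)–(iv) then follow by restriction. $\mathfrak{mp}(H')$ is the subalgebra of $\mathscr{H}_2$ spanned by $\bm{1}$ and the quadratic elements, which gives (ii). For (iii), the Virasoro embedding sends $L_p\mapsto\frac12\sum_n:b_nb_{p-n}:$ with $\bm{1}\mapsto\bm{1}$. Since $\mathfrak{mp}$ restricts to $\mathrm{Vir}$ by \eqref{eq:mp}, applying (ii) gives $L_p\mapsto\frac12\sum_n\sum_i:H^i_nH^i_{p-n}:$ with central charge $d$. Part (iv) is then a comparison with \eqref{eq:Viraction} at $A=0$, where $c^0=d$ and the formulas agree.
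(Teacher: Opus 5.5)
For $V=\pi_\Gamma$ your argument is correct and is the ``direct computation'' the paper cites. You check the brackets on $\bm{1}$, $b_k$ and $b_mb_n$, restrict to $\mathfrak{mp}(H')$ and then to $\mathrm{Vir}$ via $L_p\mapsto\frac12\sum_n :\!b_nb_{p-n}\!:$, and compare with \eqref{eq:Viraction} at $A=0$.

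Your appeal to ``continuity'' for infinite series, where the central terms actually arise, can be made rigorous without bookkeeping. By orthonormality, $\pi_\Gamma\cong W^{\otimes d}$, where $W$ is the Fock module of $\mathscr{H}$ (vacuum killed by $b_k$, $k\ge 0$) and $H^i_k$ acts as $b_k$ on the $i$-th factor. The map in (i) is then just the tensor product of $d$ copies of the representation of the Lie subalgebra $\mathscr{H}_2\subset\mathscr{H}$ on $W$. It is therefore automatically a Lie algebra map with $\bm{1}\mapsto d$, and (iii) is $d$ copies of the rank-one Sugawara construction.

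The genuine gap is your plan to treat $\pi^0_\Gamma$ ``in parallel.'' That step cannot be carried out, because the stated formulas do not define a representation on $\pi^0_\Gamma$ at all. On $\pi^0_\Gamma$ all modes commute: $H^i_k$ acts by $0$ for $k\ge 0$ and by multiplication for $k<0$.
\begin{itemize}
\item Part (i) fails already at the linear level: $[\sum_iH^i_1,\sum_jH^j_{-1}]=0$, whereas $[b_1,b_{-1}]=\bm{1}\mapsto d\,\mathrm{id}$.
\item No other scalar for $\bm{1}$ rescues it. In $\mathscr{H}_2$ one has $[b_{-1}b_{-1},b_1b_{-2}]=-2\,b_{-1}b_{-2}$, whose proposed image $-2\sum_iH^i_{-1}H^i_{-2}$ is a nonzero multiplication operator, while the commutator of the images vanishes.
\item Part (iii) fails too. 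Every normally ordered term in the image of $L_p$ with $p\ge -1$ contains a nonnegative mode, so these images act by $0$. Hence $[L_2,L_{-2}]=4L_0+\frac d2$ would force $0=\frac d2$. Likewise $[L_{-1},L_{-2}]=L_{-3}$ would force $0=\sum_iH^i_{-1}H^i_{-2}$.
\end{itemize}
In the tensor-product picture, the construction needs the level-one relations in each factor, and $\pi^0_\Gamma$ is not of that form.

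So your proof, like the paper's one-line direct computation, establishes the lemma only for $V=\pi_\Gamma$. For $\pi^0_\Gamma$ you should state explicitly that these formulas do not give an action of $\mathscr{H}_2$, $\mathfrak{mp}(H')$ or $\mathrm{Vir}$, rather than defer the case.
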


\begin{proof}
Parts (i)--(iii) follow from a direct computation. Part (iv) follows by comparison with \eqref{eq:Viraction}.
\end{proof}

\section{Spaces of coinvariants at extended PPAVs}
\label{sec:spaces}

After reviewing the case of coordinatized curves, we construct here spaces of coinvariants associated to extended PPAVs. In the case of extended Jacobians, we show how these recover the spaces associated to the corresponding coordinatized curves.
Fix throughout a positive-definite even lattice $\Gamma$ of finite rank and consider the vector space $\mathfrak{h}_\Gamma$ and Weyl algebra $\mathscr{H}_\Gamma$ as in \S\ref{sec:HVOA}. 

\subsection{The Lie algebra of outgoing states: the curve case}
For a coordinatized curve $(C,P,t)$, 
let $\mathscr{O}(C\setminus P)$ be the space of regular functions on $C\setminus P$
and define the \textit{Lie algebra of outgoing states} associated to $(C,P,t)$ as
\begin{equation}
\label{eq:houtCPt}
\mathfrak{h}_\Gamma^{\mathrm{out}}(C,P,t) := \mathfrak{h}_\Gamma \otimes_{\mathbb{C}} \mathscr{O}(C\setminus P).
\end{equation}
One has an inclusion of commutative Lie algebras \mbox{$\mathscr{O}(C\setminus P)\hookrightarrow \mathbb{C}((t))$} given by the Laurent series expansion at the point $P$ in the coordinate $t$. 
From the residue theorem, one has 
\begin{equation}
\label{eq:isotropicF}
\mathop{\mathrm{Res}}_{t=0} f dh=0 \qquad \mbox{for $f,h\in \mathscr{O}(C\setminus P)$.}
\end{equation}
Thus the induced inclusion $\mathfrak{h}_\Gamma^{\mathrm{out}}(C,P,t) \hookrightarrow \mathscr{H}_\Gamma$ realizes 
 $\mathfrak{h}_\Gamma^{\mathrm{out}}(C,P,t)$ as a (commutative) Lie subalgebra of $\mathscr{H}_\Gamma$.

\subsection{The Lie algebra of outgoing states: the PPAV case}
For an extended PPAV $a=(Z,F,L)$, define the \textit{Lie algebra of outgoing states} associated to $a$ as
\[
\mathfrak{h}_\Gamma^{\mathrm{out}}(a) := \mathfrak{h}_\Gamma \otimes_{\mathbb{C}} F.
\]
The assumption that $F$ is isotropic yields a vanishing as in \eqref{eq:isotropicF} and thus the natural inclusion 
\[
\mathfrak{h}_\Gamma^{\mathrm{out}}(a) \hookrightarrow \mathscr{H}_\Gamma
\]
realizes $\mathfrak{h}_\Gamma^{\mathrm{out}}(a)$ as a (commutative) Lie subalgebra of $\mathscr{H}_\Gamma$.
Hence the role of the residue theorem in the curve case is played here by the isotropicity of~$F$.

\begin{lemma}
\label{lemma:outonTorellilocus}
When $a=(Z,F,L)$ is the extended Jacobian of a coordinatized curve $(C,P,t)$, one has
\[
\mathfrak{h}_\Gamma^{\mathrm{out}}(a) = \mathfrak{h}_\Gamma^{\mathrm{out}}(C,P,t).
\]
\end{lemma}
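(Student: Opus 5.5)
The lemma should be essentially a matter of unwinding definitions, since the extended Torelli map \eqref{eq:extTormap} is built precisely so that the isotropic subspace $F$ of the extended Jacobian is the image of the regular functions on $C\setminus P$. By definition, $\mathfrak{h}_\Gamma^{\mathrm{out}}(a)=\mathfrak{h}_\Gamma\otimes_{\mathbb{C}}F$, and by \eqref{eq:FonTorellilocus}, $F=\mathrm{Im}\left(\mathscr{O}(C\setminus P)\hookrightarrow \mathbb{C}((t))\right)$. Meanwhile $\mathfrak{h}_\Gamma^{\mathrm{out}}(C,P,t)=\mathfrak{h}_\Gamma\otimes_{\mathbb{C}}\mathscr{O}(C\setminus P)$ from \eqref{eq:houtCPt}, viewed inside $\mathscr{H}_\Gamma$ through the Laurent expansion at $P$ in the coordinate $t$. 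So the plan is to compare the two as subspaces of $\mathfrak{h}_\Gamma((t))$, and hence of $\mathscr{H}_\Gamma$, and check that they coincide.

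The steps, in order, are as follows.

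First, since $\mathfrak{h}_\Gamma$ is a finite-dimensional vector space, tensoring with it over $\mathbb{C}$ is exact. It therefore preserves injectivity and commutes with taking images: the image of $\mathfrak{h}_\Gamma\otimes\mathscr{O}(C\setminus P)$ in $\mathfrak{h}_\Gamma\otimes\mathbb{C}((t))$ is $\mathfrak{h}_\Gamma\otimes F$.

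Second, the Laurent expansion map is injective, because a regular function on the affine curve $C\setminus P$ that vanishes to infinite order at $P$ is zero. So this identification is an isomorphism onto its image.

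Third, both Lie algebras are regarded as subalgebras of $\mathscr{H}_\Gamma$ through the same inclusion $\mathfrak{h}_\Gamma((t))\to\widehat{\mathfrak{h}}_\Gamma\to\mathscr{H}_\Gamma$. The bracket on each is trivial, by \eqref{eq:isotropicF} on one side and by the isotropy of $F$ on the other. So the equality holds as Lie subalgebras, not just as vector spaces.

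One point needs care, and it is the only real obstacle I expect. The extended PPAV lives in $H'=H/\mathbb{C}b_0$, so the $F$ of an extended PPAV is a subspace of $H'$, whereas \eqref{eq:FonTorellilocus} defines $F$ inside $\mathbb{C}((t))=H$. Since the constant functions lie in $\mathscr{O}(C\setminus P)$, I need to fix a convention for matching the two. I would adopt the reading implicit in the definition of $\mathfrak{h}_\Gamma^{\mathrm{out}}(a)$ as a subspace of $\mathscr{H}_\Gamma$: $F$ is taken literally as the image in $\mathbb{C}((t))$, including constants, and its class modulo $b_0$ gives the subspace of $H'$ used in the Arbarello--De Concini construction.

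With that convention, the constants contribute $\mathfrak{h}_\Gamma\otimes\mathbb{C}\cdot 1$, spanned by the zero modes $A_0$, to both sides, and the equality is literal. If instead one insisted on working only modulo $b_0$, one would still get equality of images in $\mathfrak{h}_\Gamma\otimes H'$. In that case I would observe that the zero modes $A_0$ are central in $\mathscr{H}_\Gamma$ and act compatibly on both sides, so nothing in the subsequent construction distinguishes the two readings.
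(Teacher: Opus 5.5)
Your proposal is correct and follows the paper's proof, which simply observes that on the Torelli locus $F$ is by definition the image of $\mathscr{O}(C\setminus P)$ in $\mathbb{C}((t))$ as in \eqref{eq:FonTorellilocus}. Your extra checks (injectivity of the Laurent expansion, exactness of tensoring with $\mathfrak{h}_\Gamma$, triviality of brackets) and your convention of reading $F$ literally inside $\mathbb{C}((t))$ match the paper's own reading.

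In your aside on the modulo-$b_0$ reading, the precise reason the two readings give the same coinvariants is that the zero modes $A_0$ act by zero on $V=\pi_\Gamma$ or $\pi^0_\Gamma$; centrality alone would not suffice. So under that reading only the actions on $V$ coincide, not the subalgebras of $\mathscr{H}_\Gamma$.
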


\begin{proof}
The statement follows from the fact that the isotropic space $F$ for extended Jacobians is defined as in \eqref{eq:FonTorellilocus}.
\end{proof}

\subsection{Spaces of coinvariants for extended PPAVs}
\label{sec:VhataV}
Let $V$ be a Heisenberg vertex algebra as in \S\ref{sec:HVOA}, that is, $V$ is either the Fock representation $\pi_\Gamma$ or $\pi_\Gamma^0$ of a Weyl algebra $\mathscr{H}_\Gamma$.

For an extended PPAV $a=(Z,F,L)$, the action of $\mathscr{H}_\Gamma$ on $V$ induces an action of its Lie subalgebra $\mathfrak{h}_\Gamma^{\mathrm{out}}(a)$ on $V$.

Define the \textit{space of coinvariants} of $V$ associated to $a$ as the vector space
\begin{equation}
\label{eq:VhataV}
\widehat{\mathbb{V}}(a,V):=V\,/\, \mathfrak{h}_\Gamma^{\mathrm{out}}(a) (V).
\end{equation}
Also, define the \textit{space of conformal blocks} of $V$ associated to $a$ as the dual vector space $\widehat{\mathbb{V}}(a,V)^\vee$. This is the space of $\mathfrak{h}_\Gamma^{\mathrm{out}}(a)$-invariant functionals on~$V$:
\[
\widehat{\mathbb{V}}(a,V)^\vee = \mathrm{Hom}_{\mathfrak{h}_\Gamma^{\mathrm{out}}(a)}\left(V,\mathbb{C} \right).
\]

These definitions are motivated by the analogous ones in the curve case from \cite[\S9.1.7]{bzf}. 
Indeed, the space of coinvariants of $V$ associated to a coordinatized curve $(C,P,t)$ is defined there as
\begin{equation}
\label{eq:VhatCPTtV}
\widehat{\mathbb{V}}(C,P,t,V):=V\,/\, \mathfrak{h}_\Gamma^{\mathrm{out}}(C,P,t) (V).
\end{equation}
The definitions \eqref{eq:VhataV} and \eqref{eq:VhatCPTtV} agree on the extended Torelli locus:

\begin{lemma}
\label{lemma:VhatonTorelli}
When $a=(Z,F,L)$ is the extended Jacobian of a coordinatized curve $(C,P,t)$, one has
\[
\widehat{\mathbb{V}}(a,V) = \widehat{\mathbb{V}}(C,P,t,V).
\]
\end{lemma}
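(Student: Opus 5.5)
The statement follows directly from Lemma \ref{lemma:outonTorellilocus} together with the two definitions \eqref{eq:VhataV} and \eqref{eq:VhatCPTtV}. Both spaces are quotients of the same vector space $V$ by the image of a Lie subalgebra of $\mathscr{H}_\Gamma$ acting through the fixed action of $\mathscr{H}_\Gamma$ on $V$. It therefore suffices to show two things: the two subalgebras coincide as subspaces of $\mathscr{H}_\Gamma$, and the two actions on $V$ are both restrictions of this same $\mathscr{H}_\Gamma$-action.

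The plan has three steps.

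First, recall that for the extended Jacobian $a=(Z,F,L)$ of $(C,P,t)$, the subspace $F\subset H'$ is by \eqref{eq:FonTorellilocus} the image of $\mathscr{O}(C\setminus P)$ under Laurent expansion at $P$ in the coordinate $t$. Lemma \ref{lemma:outonTorellilocus} then gives $\mathfrak{h}_\Gamma\otimes F=\mathfrak{h}_\Gamma\otimes\mathscr{O}(C\setminus P)$ inside $\mathfrak{h}_\Gamma((t))$.

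Second, check that the two embeddings into $\mathscr{H}_\Gamma$ are the same map. In both cases an element $A\otimes f$ is sent to the image of $A\otimes f(t)\in\mathfrak{h}_\Gamma((t))$ in $\widehat{\mathfrak{h}}_\Gamma\subset\mathscr{H}_\Gamma$. This lift is well defined without choosing a splitting of the central extension, because the relevant bracket vanishes: in the curve case by the residue theorem \eqref{eq:isotropicF}, in the PPAV case by isotropy of $F$.

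There is one subtlety here. $F$ is defined in $H'=H/\mathbb{C}b_0$, whereas $\mathscr{O}(C\setminus P)$ contains the constants and sits in $H=\mathbb{C}((t))$. I will handle this by identifying $F$ with its lift to $H$ via expansion, which is exactly what \eqref{eq:FonTorellilocus} does. This lift is compatible with how $\mathfrak{h}_\Gamma\otimes F$ is placed in $\mathscr{H}_\Gamma$. In any case the constant part $\mathfrak{h}_\Gamma\otimes 1$ acts on $V$ through the zero modes $A_0$. These generate the same subspace of $\mathfrak{h}_\Gamma^{\mathrm{out}}(V)$ under either convention, since they are literally the same elements of $\mathscr{H}_\Gamma$. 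This bookkeeping is the only step requiring any care.

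Third, since the subalgebras and their embeddings into $\mathscr{H}_\Gamma$ agree, the subspaces $\mathfrak{h}_\Gamma^{\mathrm{out}}(a)(V)$ and $\mathfrak{h}_\Gamma^{\mathrm{out}}(C,P,t)(V)$ of $V$ coincide. Hence the quotients agree, which is the claim.
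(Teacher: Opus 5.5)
Your proposal is correct and is essentially the paper's argument: the paper's proof simply says the lemma follows from Lemma \ref{lemma:outonTorellilocus}, since both spaces are quotients of $V$ by the same subalgebra of $\mathscr{H}_\Gamma$ acting through the same action. Your extra bookkeeping is harmless, but two of its justifications are imprecise: the embedding into $\mathscr{H}_\Gamma$ is pinned down by the vector-space splitting $\widehat{\mathfrak{h}}_\Gamma=\mathfrak{h}_\Gamma((t))\oplus\mathbb{C}\bm{1}$ built into the bracket (vanishing of the cocycle only makes it a Lie map, not a unique one), and constants are irrelevant because each zero mode $A_0$ is central and annihilates the vacuum vector, hence acts by zero on $V$.
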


\begin{proof}
The statement follows from Lemma \ref{lemma:outonTorellilocus}.
\end{proof}

\begin{remark}
It is easy to see that for an extended PPAV $a=(Z,F,L)$, the space $\widehat{\mathbb{V}}(a,V)$ have infinite dimension. Indeed, $\widehat{\mathbb{V}}(a,V)$ contains linearly independent vectors of type 
\[
(A\otimes h)^{\otimes n}\,\bm{1} \qquad \mbox{for $A\otimes h \in \mathfrak{h}_\Gamma \otimes_{\mathbb{C}} Z/F$ and $n\geq 0$.}
\]
\end{remark}

\begin{example}
\label{ex:pi0Gamma}
For $V=\pi_\Gamma^0$ and an extended PPAV $a=(Z,F,L)$, applying \eqref{eq:piGamma0} one has
\begin{equation*}
\pi_\Gamma^0 \cong \mathrm{Sym}\left( \mathfrak{h}_\Gamma\otimes_{\mathbb{C}} Z \right)
 \qquad \mbox{(as vertex algebras)}
\end{equation*}
and thus
\begin{equation*}
\widehat{\mathbb{V}}\left(a,\pi_\Gamma^0\right) \cong \mathrm{Sym}\left( \mathfrak{h}_\Gamma\otimes_{\mathbb{C}}Z/F \right).
\end{equation*}
\end{example}

\begin{remark}
In the curve case, spaces of conformal blocks globalize to sheaves which are usually not quasi-coherent on moduli spaces of curves \cite[\S 17.3]{bzf}. On the contrary, sheaves of coinvariants are always quasi-coherent, thus the preference for coinvariants over conformal blocks in \cite{bzf}. For the same reason, we will focus on coinvariants in the following.
\end{remark}

\section{The homogeneous space of a PPAV's extensions}
\label{sec:homogspaces}

In preparation for the construction of the spaces of coinvariants assigned to PPAVs treated in the next \S\ref{sec:VPPAV}, we 
discuss here the space $\mathrm{Sp}(X,\Theta)$ of all extended PPAVs with fixed underlying PPAV $(X,\Theta)$. We show how this is a homogeneous space for the action of a group $\mathbb{S}\mathrm{p}^+\left(H' \right)$ with nontrivial stabilizers. 
We end with a consequence of the equivariance of the sheaf of isotropy algebras with respect to the action of $\mathbb{S}\mathrm{p}^+\left(H' \right)$.

\subsection{The curve case: the space $\mathrm{Aut}(C)$}
\label{sec:AutC}
Before treating the PPAV case, first we review here the curve case as treated in \cite{bzf}.

For an algebraic curve $C$ of genus $g$, let
\[
\mathrm{Aut}(C) := \left\{\mbox{all } (C,P,t) \mbox{ with fixed }C\right\}.
\]
This set has naturally the structure of an infinite-dimensional complex manifold and 
has a natural identification with the fiber of the forgetful map \mbox{$\widehat{\mathcal{M}}_g\rightarrow \mathcal{M}_g$} from \S\ref{sec:Torelli} over the moduli point corresponding to $C$. 

The fibers of the forgetful map $\mathrm{Aut}(C)\rightarrow C$ are naturally torsors of the group 
\begin{equation}
\label{eq:AutO}
\mathbb{A}\mathrm{ut}(\mathbb{O}) := \left\{t\mapsto a_1t + a_2t^2 + \dots \,|\, a_1\mbox{ is a unit} \right\}
\end{equation}
which acts by change of coordinates. Thus $\mathrm{Aut}(C)\rightarrow C$ is a principal $\mathbb{A}\mathrm{ut}(\mathbb{O})$-bundle.

\subsection{The PPAV case: the space $\mathrm{Sp}(X,\Theta)$}
For a PPAV $(X,\Theta)$ of dimension $g$, define
\[
\mathrm{Sp}(X,\Theta) := \left\{\mbox{all } (Z,F,L) \mbox{ with underlying }(X,\Theta)\right\}.
\]
This set has a natural identification with the fiber of the forgetful map \mbox{$\widehat{\mathcal{A}}_g\rightarrow \mathcal{A}_g$} from \S\ref{sec:moduliAghat} over the moduli point corresponding to $(X,\Theta)$. By this identification, $\mathrm{Sp}(X,\Theta)$ has the structure of an infinite-dimensional complex manifold.

Explicitly, $\mathrm{Sp}(X,\Theta)$ is constructed as follows.
Let 
\[
\mathrm{P}(X,\Theta) := \{ \mbox{period matrices for $(X,\Theta)$}\}\subset \mathcal{H}_g.
\]
The action of $\mathrm{Sp}(2g,\mathbb{Z})$ on $\mathcal{H}_g$ restricts to a transitive action on $\mathrm{P}(X,\Theta)$.
Consider the infinite-dimensional complex manifold
\[
\widetilde{\mathrm{Sp}}(X,\Theta) := \widetilde{S}^2(H'_+) \times \mathcal{B}_g(H'_+) \times \mathrm{P}(X,\Theta) \subset \widehat{\mathcal{H}}_g
\]
with notation as in \S\ref{sec:moduliAghat}.
The map $\widehat{\mathcal{H}}_g\twoheadrightarrow \widehat{\mathcal{A}}_g$ from \eqref{eq:HghattoAghat} restricts to a map
\[
\widetilde{\mathrm{Sp}}(X,\Theta) \twoheadrightarrow \mathrm{Sp}(X,\Theta).
\]
Note that $\widetilde{\mathrm{Sp}}(X,\Theta)$ can be identified with the fiber of the composition of the maps $\widehat{\mathcal{H}}_g\rightarrow \widehat{\mathcal{A}}_g \rightarrow \mathcal{A}_g$ over the moduli point corresponding to $(X,\Theta)$.
Moreover, the space $\widetilde{\mathrm{Sp}}(X,\Theta)$ is closed under the
action of $\mathrm{Sp}(2g,\mathbb{Z})$ on $\widehat{\mathcal{H}}_g$ from \eqref{eq:Sp2gZaction}, and one has
\[
\mathrm{Sp}(X,\Theta) \,\cong\,  \widetilde{\mathrm{Sp}}(X,\Theta) \,/\, \mathrm{Sp}(2g,\mathbb{Z}).
\]
As the action of $\mathrm{Sp}(2g,\mathbb{Z})$ is simple, the set $\mathrm{Sp}(X,\Theta)$ inherits an 
infinite-dimensional complex manifold structure from $\widetilde{\mathrm{Sp}}(X,\Theta)$.

\subsection{The Jacobian case}
When $(X,\Theta)$ is the Jacobian of a curve $C$, one has an inclusion
\[
\mathrm{Aut}(C) \hookrightarrow \mathrm{Sp}(X,\Theta)
\]
obtained by restricting the inclusion $\widehat{\mathcal{M}}_g\hookrightarrow \widehat{\mathcal{A}}_g$ from \eqref{eq:extTormap}, and thus the commutative diagram: 
\[
\begin{tikzcd}
& \mathrm{Sp}(X,\Theta) \arrow[hookrightarrow]{rr} \arrow{dd} && \widehat{\mathcal{A}}_g \arrow{dd}\\
\mathrm{Aut}(C) \arrow[crossing over, hookrightarrow]{rr} \arrow[hookrightarrow]{ru} \arrow{dd} && \widehat{\mathcal{M}}_g \arrow[hookrightarrow]{ru}\\
& \mathrm{Spec}\,(\mathbb{C}) \arrow[]{rr}[near start]{(X,\Theta)} & &  \mathcal{A}_g\\
 \mathrm{Spec}\,(\mathbb{C}) \arrow[]{rr}[near start]{C} \arrow[]{ru} & &  \mathcal{M}_g. \arrow[hookrightarrow]{ru} \arrow[crossing over, leftarrow]{uu} 
\end{tikzcd}
\]

\subsection{Symplectic groups}
\label{sec:spgp}
Here we review the Lie groups of symplectic automorphisms from \cite[\S3.2]{avva}.

Let $\mathbb{S}\mathrm{p}^+\left(H' \right)$ be 
the connected component of the identity of the Lie group of continuous symplectic automorphisms of $H'$ preserving the subspace $H'_+$:
\[
\mathbb{S}\mathrm{p}^+\left(H' \right) := \left\{
\rho\in\mathrm{GL}\left(H'\right) \, \Bigg| \, 
\begin{array}{ll}
\langle\rho(f), \rho(g) \rangle = \langle f, g \rangle & \mbox{for all $f,g\in H'$,}\\[5pt]
\rho\left(H'_+\right) = H'_+
\end{array}
\right\}^\circ.
\]
Here $\mathrm{GL}\left(H'\right)$ is the general linear Lie group of linear and continuous endomorphisms of $H'$, and
$\{\}^\circ$ stands for the connected component of the identity.
The group $\mathbb{A}\mathrm{ut}(\mathbb{O})$ from \eqref{eq:AutO} is naturally a subgroup of $\mathbb{S}\mathrm{p}^+\left(H' \right)$.

Also, for an extended PPAV $(Z,F,L)$, let $\mathbb{S}\mathrm{p}^+_F\left(H' \right)$ be the Lie subgroup of $\mathbb{S}\mathrm{p}^+\left(H' \right)$ defined as
\[
\mathbb{S}\mathrm{p}^+_F\left(H' \right) := \left\{
\rho\in\mathbb{S}\mathrm{p}^+\left(H' \right) \, \Bigg| \,
\begin{array}{l}
\rho(F^\perp) = F^\perp,\\[5pt]
\rho(F) = F,\\[5pt]
\rho|_{F^\perp/F} = \mathrm{id}|_{F^\perp/F}
\end{array}
\right\}.
\]
Here $F^\perp/F$ is considered as a subspace of $H'$.

\begin{theorem}[{\cite[Thm 3.2]{avva}}]
\label{thm:Sp+action}
For a PPAV $(X,\Theta)$: 
\begin{enumerate}[(i)]
\item The group $\mathbb{S}\mathrm{p}^+\left(H' \right)$ acts transitively on the space $\mathrm{Sp}(X,\Theta)$. 
\smallskip
\item The stabilizer of a point $(Z,F,L)$ in $\mathrm{Sp}(X,\Theta)$ is $\mathbb{S}\mathrm{p}^+_F\left(H' \right)$.
\smallskip
\item The subgroups $\mathbb{S}\mathrm{p}^+_F\left(H' \right)$ obtained by varying $(Z, F, L)$ are pairwise conjugated inside $\mathbb{S}\mathrm{p}^+\left(H' \right)$.
\end{enumerate}
\end{theorem}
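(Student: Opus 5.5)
The proof has three steps, one for each part: transitivity, the stabilizer, and conjugacy. Throughout I use the description $\mathrm{Sp}(X,\Theta)\cong \widetilde{\mathrm{Sp}}(X,\Theta)/\mathrm{Sp}(2g,\mathbb{Z})$ with $\widetilde{\mathrm{Sp}}(X,\Theta)=\widetilde{S}^2(H'_+)\times\mathcal{B}_g(H'_+)\times \mathrm{P}(X,\Theta)$. The action of $\rho\in\mathbb{S}\mathrm{p}^+(H')$ on an extended PPAV is the obvious one: $(Z,F,L)\mapsto(\rho Z,\rho F,\rho L)$, where $\rho$ induces a symplectic isomorphism $F^\perp/F\to(\rho F)^\perp/\rho F$.

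First I check that this action preserves $\mathrm{Sp}(X,\Theta)$.
\begin{itemize}
\item Since $\rho$ is symplectic, $\rho Z$ is Lagrangian and $\rho F$ is isotropic of codimension $g$ in $\rho Z$.
\item Since $\rho(H'_+)=H'_+$, condition \eqref{eq:cond1} is preserved, and $\rho$ carries $F^\perp_+$ to $(\rho F)^\perp_+$, so \eqref{eq:cond2} is preserved.
\item Conditions \eqref{eq:cond3} and \eqref{eq:cond4} involve only the symplectic form, which $\rho$ respects.
\item The underlying PPAV is unchanged: $\rho$ induces a linear isomorphism $Z/F\to\rho Z/\rho F$ carrying the image of $L$ to the image of $\rho L$ and $B$ to $B$. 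Hence $(X,\Theta)$ is preserved up to isomorphism.
\end{itemize}

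\textbf{Transitivity (i).} Take two points of $\mathrm{Sp}(X,\Theta)$ and lift them to $(\varphi,\bm h,\Omega)$ and $(\varphi',\bm h',\Omega')$. Acting by $\mathrm{Sp}(2g,\mathbb{Z})$, I may assume $\Omega=\Omega'$. I then find $\rho$ in stages.
\begin{itemize}
\item \emph{Adjusting $Z$.} The graph of $\varphi$ is moved to the graph of $\varphi'$ by the unipotent map $\begin{pmatrix}1&0\\ \varphi'-\varphi&1\end{pmatrix}$ in the decomposition $H'=H_-\oplus H'_+$. This map is symplectic because $\varphi'-\varphi$ is symmetric, and it fixes $H'_+$.
\item \emph{Adjusting $F$.} Block-diagonal maps $\begin{pmatrix}A&0\\0&(A^*)^{-1}\end{pmatrix}$ with $A\in\mathrm{GL}(H_-)$ (acting on $Z$ via the graph) are symplectic and preserve $H'_+$. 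I choose such an $A$ sending the frame $\bm h$ to $\bm h'$; this is possible since $\mathrm{GL}$ acts transitively on $g$-frames, and I choose $A$ in the identity component so the resulting $\rho$ is connected to the identity. Sending $\bm h$ to $\bm h'$ moves $F$ to $F'$, and with equal period matrices the lattices then match.
\end{itemize}
I expect the main technical obstacle to be continuity/topological issues in the infinite-dimensional setting (the dual $A^*$ on $H'_+$), which I would handle by restricting to continuous automorphisms as in the definition.

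\textbf{Stabilizer (ii).} Suppose $\rho$ fixes $(Z,F,L)$.
\begin{itemize}
\item Then $\rho F=F$, hence $\rho F^\perp=F^\perp$, and $\rho$ preserves the lattice $L\subset F^\perp/F$ together with the decomposition $Z/F\oplus F^\perp_+$.
\item So $\rho|_{F^\perp/F}$ lies in the discrete group $\mathrm{Sp}(L)$. Connectedness of the stabilizer, together with the freeness of the $\mathrm{Sp}(2g,\mathbb{Z})$-action, forces $\rho|_{F^\perp/F}=\mathrm{id}$.
\item The subtle point is that the stabilizer need not be connected a priori. I would argue via the lift to $\widetilde{\mathrm{Sp}}(X,\Theta)$: an element fixing the point downstairs sends the lift to a lift differing by some $\sigma\in\mathrm{Sp}(2g,\mathbb{Z})$. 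Since $\rho$ fixes $H'_+$ and $Z$, it acts on $\Omega$ trivially, and simplicity of the $\mathrm{Sp}(2g,\mathbb{Z})$-action then gives $\sigma=1$.
\item Conversely, any element of $\mathbb{S}\mathrm{p}^+_F(H')$ clearly fixes $(Z,F,L)$.
\end{itemize}

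\textbf{Conjugacy (iii).} This is formal from (i) and (ii): if $\rho(Z,F,L)=(Z',F',L')$, then $\mathbb{S}\mathrm{p}^+_{F'}(H')=\rho\,\mathbb{S}\mathrm{p}^+_F(H')\,\rho^{-1}$.
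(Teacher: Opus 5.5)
\textbf{The gap is in (ii), and it cannot be closed: the equality fails as stated.}

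Your key step is ``$\rho$ fixes $H'_+$ and $Z$, hence acts trivially on $\Omega$, so simplicity of the $\mathrm{Sp}(2g,\mathbb{Z})$-action gives $\sigma=1$''. This is a non sequitur. The element $\rho$ only \emph{preserves} $H'_+$, so for a lift $\tilde a=(\varphi,\bm{h},\Omega)$ you get
\[
\rho\tilde a=(\varphi,\rho\bm{h},\Omega)=\sigma\tilde a .
\]
This forces only $\sigma\in\mathrm{Stab}_{\mathrm{Sp}(2g,\mathbb{Z})}(\Omega)\cong\mathrm{Aut}(X,\Theta)$. Freeness of the action on $\widehat{\mathcal{H}}_g$ says merely that $\sigma$ is determined by $\rho$.

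That stabilizer always contains $-I$, which acts by $\bm{h}\mapsto-\bm{h}$, and this is realized inside $\mathbb{S}\mathrm{p}^+(H')$. Take $X:=\mathrm{id}_{H_-}\oplus(-\mathrm{id}_{H'_+})$, i.e.\ the operator $\sum_{n>0}\frac1n b_{-n}b_n$. It lies in $\mathfrak{sp}^+(H')$, and $\exp(i\pi sX)$ for $0\le s\le 1$ joins $\mathrm{id}$ to $-\mathrm{id}_{H'}$. Now $-\mathrm{id}_{H'}$ fixes every $(Z,F,L)$, since subspaces and lattices are stable under $-1$. Yet it acts by $-1$ on $F^\perp/F$, so it is not in $\mathbb{S}\mathrm{p}^+_F(H')$. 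For the same reason, your claim that the stabilizer is connected is false.

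\textbf{What your argument does prove.} The map $\rho\mapsto\rho|_{F^\perp/F}$ gives an exact sequence
\[
1\rightarrow \mathbb{S}\mathrm{p}^+_F(H')\rightarrow \mathrm{Stab}(Z,F,L)\rightarrow \mathrm{Aut}(X,\Theta)\rightarrow 1 .
\]
Its kernel is exactly the stabilizer of the lift $\tilde a$:
\begin{itemize}
\item if $\rho Z=Z$ and $\rho\bm{h}=\bm{h}$, then $\rho$ is the identity on $F^\perp_+$ and on $Z/F$;
\item conversely, $\rho|_{F^\perp/F}=\mathrm{id}$ forces $\rho Z=Z$ and $\rho h_i-h_i\in F\cap H'_+=0$.
\end{itemize}
For surjectivity, extend the analytic representation of an automorphism by $\mathrm{id}_F$ to some $A\in\mathrm{GL}(Z)$, and take $A\oplus(A^*)^{-1}$ on $Z\oplus H'_+$. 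This induces the complexified rational representation on $F^\perp/F$, hence preserves $L$. It lies in the identity component because $\mathrm{GL}_g(\mathbb{C})$ is connected.

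So $\mathbb{S}\mathrm{p}^+_F(H')$ is a finite-index normal subgroup of the true stabilizer, with the same Lie algebra $\mathfrak{sp}^+_F(H')$. Correspondingly, $\mathbb{S}\mathrm{p}^+(H')/\mathbb{S}\mathrm{p}^+_F(H')$ is the $|\mathrm{Aut}(X,\Theta)|$-sheeted cover $\widetilde{S}^2(H'_+)\times\mathcal{B}_g(H'_+)\times\{\Omega\}$ of $\mathrm{Sp}(X,\Theta)$, not $\mathrm{Sp}(X,\Theta)$ itself. The paper only cites \cite{avva} for this theorem, so there is no in-paper proof to compare against. You should prove (ii) in this corrected form.

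\textbf{Parts (i) and (iii) are fine.}
\begin{itemize}
\item In (i), make the second map block-diagonal with respect to $Z'\oplus H'_+$, so that $\varphi'$ is not disturbed.
\item Also in (i), the identity-component issue is settled by a concrete choice of $A$. Take $A$ to be the identity on the common kernel in $Z'$ of all the functionals $\langle h_i,\cdot\rangle$ and $\langle h'_i,\cdot\rangle$. On a complement $U$ of dimension at most $2g$, take an element of $\mathrm{GL}(U)$ carrying one frame to the other.
\item Part (iii) needs only (i), since $\rho\,\mathbb{S}\mathrm{p}^+_F(H')\,\rho^{-1}=\mathbb{S}\mathrm{p}^+_{\rho F}(H')$ directly from the definition.
\end{itemize}
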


\noindent Thus for a point $(Z,F,L)$ in $\mathrm{Sp}(X,\Theta)$, one has an isomorphism of spaces
\[
\mathrm{Sp}(X,\Theta) \,\cong\, 
\mathbb{S}\mathrm{p}^+\left(H' \right) \, / \, \mathbb{S}\mathrm{p}^+_F\left(H' \right).
\]

\subsection{Symplectic algebras}
To describe the Lie algebra of $\mathbb{S}\mathrm{p}^+\left(H' \right)$, consider 
\begin{equation}
\label{eq:sp+}
\mathfrak{sp}^+\left( H'\right):= \left\{ X\in\mathfrak{sp}\left( H'\right)\,|\, X\left(H'_+ \right)\subseteq H'_+ \right\}.
\end{equation}
This is the Lie subalgebra of $\mathfrak{sp}\left( H'\right)$ topologically generated by $b_i b_j$ with $i$ and $j$ not both negative.
One checks
\[
\mathrm{Lie}\left(\mathbb{S}\mathrm{p}^+\left(H' \right) \right) = \mathfrak{sp}^+\left( H'\right).
\]
Conversely, since $\mathbb{S}\mathrm{p}^+\left(H' \right)$ is by definition connected, elements in $\mathbb{S}\mathrm{p}^+\left(H' \right)$ are products of exponentials of elements in $\mathfrak{sp}^+\left( H'\right)$. 

Also, recall the Lie algebra $\mathfrak{sp}_F\left( H'\right)$ from \eqref{eq:spF} and define
\begin{equation}
\label{eq:sp+F}
\mathfrak{sp}^+_F\left( H'\right) := \mathfrak{sp}^+\left( H'\right)\cap \mathfrak{sp}_F\left( H'\right).
\end{equation}
This is the Lie subalgebra of $\mathfrak{sp}^+\left( H'\right)$ topologically generated by $fb_i$ with $f\in F$ and $i>0$.
One has
\[
\mathrm{Lie}\left(\mathbb{S}\mathrm{p}^+_F\left(H' \right) \right) = \mathfrak{sp}^+_F\left( H'\right).
\]
This is the \textit{isotropy Lie algebra} at the point $(Z,F,L)$ of $\mathrm{Sp}(X,\Theta)$.

\smallskip

The following consequence of Theorem \ref{thm:Sp+action} will be useful.
Consider the sheaf of Lie algebras $\mathfrak{sp}^+\left( H'\right)\,\widehat{\otimes}_\mathbb{C}\, \mathscr{O}_{\mathrm{Sp}(X,\Theta)}$
and the \textit{sheaf of isotropy Lie subalgebras}
\begin{equation*}
\mathfrak{sp}^{+, \mathrm{out}}\left(X,\Theta\right) \subset \mathfrak{sp}^+\left( H'\right) \,\widehat{\otimes}_\mathbb{C}\, \mathscr{O}_{\mathrm{Sp}(X,\Theta)}
\end{equation*}
whose fiber at $(Z,F,L)$ is $\mathfrak{sp}^+_F\left( H'\right)$.
This sheaf can be constructed as follows.
Let 
\begin{equation}
\label{eq:universalFSp}
\mathbb{F}\rightarrow\mathrm{Sp}(X,\Theta) 
\end{equation}
be the tautological sheaf whose fiber at $(Z,F,L)$ is $F$. Then
\begin{equation}
\label{eq:sp+out}
\mathfrak{sp}^{+, \mathrm{out}}\left(X,\Theta\right) = \mathbb{F} \,\widehat{\otimes}_{\mathbb{C}}\, H'_+.
\end{equation}

From Theorem \ref{thm:Sp+action}, $\mathfrak{sp}^+\left( H'\right) \,\widehat{\otimes}_\mathbb{C}\, \mathscr{O}_{\mathrm{Sp}(X,\Theta)}$ is a Lie algebroid on $\mathrm{Sp}(X,\Theta)$ with surjective anchor map 
\[
\mathfrak{sp}^+\left( H'\right) \,\widehat{\otimes}_\mathbb{C}\, \mathscr{O}_{\mathrm{Sp}(X,\Theta)} \rightarrow \mathscr{T}_{\mathrm{Sp}(X,\Theta)}.
\]
The sheaf $\mathfrak{sp}^{+, \mathrm{out}}\left(X,\Theta\right)$ is by definition the kernel of this anchor map.

The actions of $\mathfrak{sp}^+\left(H'\right)$ on itself and on $\mathrm{Sp}(X,\Theta)$ induce an action of $\mathfrak{sp}^+\left(H'\right)$ on $\mathfrak{sp}^+\left(H'\right) \,\widehat{\otimes}_\mathbb{C}\, \mathscr{O}_{\mathrm{Sp}(X,\Theta)}$:
\[
S\left( T\otimes f\right) = [S,T]\otimes f + T\otimes S(f)
\]
for $S\in \mathfrak{sp}^+\left(H'\right)$ and $T\otimes f$ a local section of $\mathfrak{sp}^+\left(H'\right) \,\widehat{\otimes}_\mathbb{C}\, \mathscr{O}_{\mathrm{Sp}(X,\Theta)}$.

\begin{corollary}
\label{cor:sp+outpreserved}
The subsheaf 
\[
\mathfrak{sp}^{+, \mathrm{out}}\left(X,\Theta\right) \subset
\mathfrak{sp}^+\left( H'\right) \,\widehat{\otimes}_\mathbb{C}\, \mathscr{O}_{\mathrm{Sp}(X,\Theta)}
\]
is preserved by the action of $\mathfrak{sp}^+\left(H'\right)$ on $\mathfrak{sp}^+\left( H'\right) \,\widehat{\otimes}_\mathbb{C}\, \mathscr{O}_{\mathrm{Sp}(X,\Theta)}$.
\end{corollary}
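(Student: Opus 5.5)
The plan is to show that the subsheaf is the kernel of a morphism which commutes with the $\mathfrak{sp}^+\left(H'\right)$-actions on both sides. By construction, $\mathfrak{sp}^{+,\mathrm{out}}\left(X,\Theta\right)$ is the kernel of the anchor map
\[
a\colon \mathfrak{sp}^+\left( H'\right) \,\widehat{\otimes}_\mathbb{C}\, \mathscr{O}_{\mathrm{Sp}(X,\Theta)} \rightarrow \mathscr{T}_{\mathrm{Sp}(X,\Theta)},\qquad T\otimes f\mapsto f\,\xi_T,
\]
where $\xi_T$ denotes the vector field generated by $T$ through the action of Theorem \ref{thm:Sp+action}(i). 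We let $\mathfrak{sp}^+\left(H'\right)$ act on $\mathscr{T}_{\mathrm{Sp}(X,\Theta)}$ by the Lie derivative, $S\cdot \xi := [\xi_S,\xi]$. It then suffices to check the equivariance identity
\[
a\bigl(S(T\otimes f)\bigr) = [\xi_S, a(T\otimes f)].
\]
Given this identity, any local section $\sigma$ with $a(\sigma)=0$ satisfies $a(S\sigma)=[\xi_S,0]=0$, so $S\sigma$ again lies in $\mathfrak{sp}^{+,\mathrm{out}}\left(X,\Theta\right)$.

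The identity is a direct computation. The left-hand side expands as
\[
a\bigl([S,T]\otimes f + T\otimes S(f)\bigr) = f\,\xi_{[S,T]} + S(f)\,\xi_T.
\]
The right-hand side expands as
\[
[\xi_S, f\xi_T] = \xi_S(f)\,\xi_T + f\,[\xi_S,\xi_T].
\]
Here $S(f)$ is by definition $\xi_S(f)$, the derivative of $f$ along the infinitesimal action. The two sides therefore agree exactly when $\xi_{[S,T]}=[\xi_S,\xi_T]$, that is, when $T\mapsto \xi_T$ is a Lie algebra homomorphism $\mathfrak{sp}^+\left(H'\right)\to \mathscr{T}_{\mathrm{Sp}(X,\Theta)}$. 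This holds because the infinitesimal action comes from the group action of $\mathbb{S}\mathrm{p}^+\left(H'\right)$ in Theorem \ref{thm:Sp+action}. The one subtlety is the sign convention: for a left action, the fundamental vector fields form an anti-homomorphism unless the standard sign is inserted. I would fix conventions so that the Lie algebroid structure used in the paper (surjective anchor, bracket extended from $\mathfrak{sp}^+$) is consistent, and that consistency is exactly the homomorphism property.

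As a cross-check, I would also give a pointwise argument. The fiber at $p=(Z,F,L)$ is $\mathfrak{sp}^+_F\left(H'\right)=\mathrm{Lie}\,\mathbb{S}\mathrm{p}^+_F\left(H'\right)$ by Theorem \ref{thm:Sp+action}(ii). For $\rho\in\mathbb{S}\mathrm{p}^+\left(H'\right)$, the stabilizer at $\rho p$ is $\rho\,\mathbb{S}\mathrm{p}^+_F\left(H'\right)\rho^{-1}$, so $\mathrm{Ad}_\rho$ carries fibers of the subsheaf to fibers; this is the conjugacy in Theorem \ref{thm:Sp+action}(iii). Differentiating along $\rho=\exp(\epsilon S)$ recovers the formula $S(T\otimes f)=[S,T]\otimes f+T\otimes S(f)$ on sections, which gives preservation again. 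Concretely, using \eqref{eq:sp+out}, a local section $\sum f_\alpha h_\alpha$ with $f_\alpha$ a section of $\mathbb{F}$ and $h_\alpha\in H'_+$ is sent to $\sum \bigl(S(f_\alpha) h_\alpha + f_\alpha S(h_\alpha)\bigr)$, where $S(f_\alpha)$ now includes the motion of $F$. Since $\rho(F)$ is the $F$-space of $\rho p$, the term $S(f_\alpha)$ is again a section of $\mathbb{F}$, and $S(h_\alpha)\in H'_+$ because $S$ preserves $H'_+$.

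I expect the main obstacle to be analytic rather than algebraic: working in the infinite-dimensional setting with completed tensor products. One has to justify that $\xi_S(f)$ makes sense for local sections of $\mathscr{O}_{\mathrm{Sp}(X,\Theta)}\widehat{\otimes}\,\mathfrak{sp}^+$, and that the kernel of the anchor commutes with the completion. I would handle this by reducing to the chart $\widetilde{S}^2(H'_+)\times\mathcal{B}_g(H'_+)\times \mathrm{P}(X,\Theta)$, on which $\mathrm{Sp}(2g,\mathbb{Z})$ acts by $\mathfrak{sp}^+$-equivariant deck transformations. There the action is by explicit continuous vector fields, so the identity above can be checked termwise on topological generators $b_ib_j$.
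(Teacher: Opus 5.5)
Your proof is correct, but your main argument takes a different route from the paper's. The paper's proof is the one-line group-level fact that the isotropy sheaf is equivariant under the structure group $\mathbb{S}\mathrm{p}^+(H')$, because stabilizers transform by conjugation, $\mathrm{Stab}(\rho\cdot a)=\rho\,\mathbb{S}\mathrm{p}^+_F(H')\rho^{-1}$. That is exactly your ``cross-check'' paragraph.

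Your primary argument is purely infinitesimal. You identify $\mathfrak{sp}^{+,\mathrm{out}}(X,\Theta)$ with the kernel of the anchor and show that the anchor intertwines the given action with $[\xi_S,-]$ on vector fields. This reduces everything to $T\mapsto \xi_T$ being a Lie algebra homomorphism, i.e.\ to the action-Lie-algebroid structure the paper already asserts.

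What your route buys is independence from the group. You do not need to integrate $\mathfrak{sp}^+(H')$ to $\mathbb{S}\mathrm{p}^+(H')$, nor Theorem~\ref{thm:Sp+action}(ii)--(iii), nor the identification $\mathrm{Lie}\,\mathbb{S}\mathrm{p}^+_F(H')=\mathfrak{sp}^+_F(H')$, all of which are delicate in infinite dimensions. The same argument also adapts directly to Corollary~\ref{cor:spoutpreserved}, where the group is an ind-scheme and $\widehat{\mathcal{A}}_g$ is only infinitesimally homogeneous. There the subsheaf is the kernel of a map of Lie algebroids to $\mathscr{F}_\Lambda$, so one only needs that map to respect brackets. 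The paper's route, in exchange, is shorter and ties the statement directly to the homogeneous-space picture of Theorem~\ref{thm:Sp+action}.

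Your caveat on signs is the right one. The derivative $S(f)$ on functions must use the same (homomorphism) convention as the anchor; with the opposite convention the two terms do not cancel. With the natural convention $(\rho\cdot\Phi)(p)=\mathrm{Ad}_\rho\,\Phi(\rho^{-1}p)$ this is automatic. The same convention is what guarantees that $S(f_\alpha)$ lands in $\mathbb{F}$ in your concrete check, since $\rho F_{\rho^{-1}p}=F_p$.
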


\begin{proof}
The statement follows from the general fact that the sheaf of isotropy algebras is equivariant with respect to the action of the structure group, $\mathbb{S}\mathrm{p}^+\left(H' \right)$  in this case. 
\end{proof}

\section{Spaces of coinvariants for PPAVs}
\label{sec:VPPAV}

Here we construct spaces of coinvariants associated to PPAVs.
We start by presenting the strategy in the curve case and highlight a key technicality arising in the PPAV case.
For Jacobians, we show how the spaces defined here recover the spaces assigned to the corresponding curves (Theorem \ref{thm:VonTorelli}).
We end with a generalization of Theorem \ref{thm:Vpi0} (Theorem \ref{thm:Vpi0Gamma}) and its proof.

\subsection{The curve case}
\label{sec:curvecasedescent}
Spaces of coinvariants assigned to curves are constructed in \cite[\S9.1]{bzf} as follows.
For an algebraic curve $C$ and a Heisenberg vertex algebra $V$, one starts from the trivial bundle 
\[
V\times \mathrm{Aut}(C)\rightarrow \mathrm{Aut}(C). 
\]
The action of $\mathrm{Vir}$ on $V$ induces an action of its Lie subalgebra $\mathrm{Der}_0(\mathbb{O}):=t\mathbb{C}\llbracket t\rrbracket$ on $V$. One has $\mathrm{Der}_0(\mathbb{O}) = \mathrm{Lie}\left( \mathbb{A}\mathrm{ut}(\mathbb{O})\right)$, where 
$\mathbb{A}\mathrm{ut}(\mathbb{O})$ is the Lie group from \eqref{eq:AutO}.
The action of $\mathrm{Der}_0(\mathbb{O})$ exponentiates to an  action on $V$ of $\mathbb{A}\mathrm{ut}(\mathbb{O})$ \cite[\S6.3]{bzf}.
Combining with the simply transitive action of $\mathbb{A}\mathrm{ut}(\mathbb{O})$ on the fibers of $\mathrm{Aut}(C)\rightarrow C$, one has an $\mathbb{A}\mathrm{ut}(\mathbb{O})$-equivariant structure on $V\times \mathrm{Aut}(C)$. Via descent along the principal $\mathbb{A}\mathrm{ut}(\mathbb{O})$-bundle $\mathrm{Aut}(C)\rightarrow C$, this yields a bundle on $C$, denoted $\mathscr{V}\rightarrow C$.

To construct the spaces of coinvariants assigned to $C$, one then proceeds as follows: the action on $V$ of the Lie algebra of outgoing states from \eqref{eq:houtCPt} is $\mathbb{A}\mathrm{ut}(\mathbb{O})$-equivariant, thus one can construct a sheaf of Lie algebras of outgoing states on $C$ acting on $\mathscr{V}$ and then define the space of coinvariants as the fibers over $C$ of the quotient of $\mathscr{V}$ by this action.

Now, the above strategy is not amenable in the PPAV case. Indeed, when replacing the group $\mathbb{A}\mathrm{ut}(\mathbb{O})$ with the larger $\mathbb{S}\mathrm{p}^+\left(H' \right)$, the action of $\mathfrak{sp}^+\left( H'\right) = \mathrm{Lie}\left(\mathbb{S}\mathrm{p}^+\left(H' \right) \right)$ cannot be exponentiated to an action of $\mathbb{S}\mathrm{p}^+\left(H' \right)$ on $V$. Thus for a PPAV $(X,\Theta)$, the trivial bundle $V\times \mathrm{Sp}(X,\Theta) \rightarrow \mathrm{Sp}(X,\Theta)$ is not 
$\mathbb{S}\mathrm{p}^+\left(H' \right)$-equivariant.

However, recall from \S\ref{sec:homogspaces} that while the action of $\mathbb{A}\mathrm{ut}(\mathbb{O})$ on
the fibers of $\mathrm{Aut}(C)\rightarrow C$ is free, the action of  $\mathbb{S}\mathrm{p}^+\left(H' \right)$ on $\mathrm{Sp}(X,\Theta)$ has nontrivial stabilizers. Correspondingly, rather than an action on  $V$,
we only need an action of $\mathbb{S}\mathrm{p}^+\left(H' \right)$ on a \textit{quotient} of $V$ where the stabilizers act trivially.

Thus, instead of the trivial bundle $V\times \mathrm{Sp}(X,\Theta) \rightarrow \mathrm{Sp}(X,\Theta)$, we start from the bundle on $\mathrm{Sp}(X,\Theta)$ which globalizes the spaces of coinvariants associated to extended PPAVs with underlying $(X,\Theta)$ from \S\ref{sec:VhataV}. Before discussing this in detail starting from \S\ref{sec:VhatXthetaV}, we briefly run this strategy in the curve case. The resulting spaces of coinvariants are equivalent to the outcome of the previous strategy.

Specifically, the spaces of coinvariants $\widehat{\mathbb{V}}(C,P,t,V)$ from \eqref{eq:VhatCPTtV} globalize to a quasi-coherent sheaf on $\mathrm{Aut}(C)$, which we denote as
\[
\widehat{\mathbb{V}}(C,V) \rightarrow \mathrm{Aut}(C).
\]
The action of $\mathrm{Der}_0(\mathbb{O})$ on $V$ induces an action of $\mathrm{Der}_0(\mathbb{O})$ on $\widehat{\mathbb{V}}(C,V)$ which exponentiates to an $\mathbb{A}\mathrm{ut}(\mathbb{O})$-equivariant structure on $\widehat{\mathbb{V}}(C,V)$ (this is contained in \cite[Thm 17.3.11]{bzf}).
Via descent along the principal $\mathbb{A}\mathrm{ut}(\mathbb{O})$-bundle $\mathrm{Aut}(C)\rightarrow C$, this yields a bundle on $C$. The fiber at the point $P$ is the vector space of coinvariants assigned to $(C,P)$, here denoted $\mathbb{V}(C,P,V)$.

Finally, as the spaces $\mathbb{V}(C,P,V)$ obtained by varying the point $P$ in $C$ are all canonically isomorphic (propagation of vacua, \cite[Thm 10.3.1]{bzf}), they determine a space assigned to $C$, denoted $\mathbb{V}(C,V)$. This is the \textit{space of coinvariants assigned to $C$.}

\subsection{The sheaf $\widehat{\mathbb{V}}(X,\Theta,V)$}
\label{sec:VhatXthetaV}
For a PPAV $(X,\Theta)$ and a Heisenberg vertex algebra $V$, the spaces of coinvariants associated to extended PPAVs with underlying $(X,\Theta)$ from \S\ref{sec:VhataV} globalize to a quasi-coherent sheaf on $\mathrm{Sp}(X,\Theta)$, which we denote as 
\[
\widehat{\mathbb{V}}(X,\Theta,V)\rightarrow\mathrm{Sp}(X,\Theta).
\]
Specifically, recall the tautological sheaf $\mathbb{F}\rightarrow\mathrm{Sp}(X,\Theta)$ from \eqref{eq:universalFSp}.
Define
\[
\mathfrak{h}_\Gamma^{\mathrm{out}}(X,\Theta):= \mathfrak{h}_\Gamma \otimes_{\mathbb{C}} \mathbb{F}.
\]
This is a subsheaf of the sheaf of Lie algebras $\mathscr{H}_\Gamma \,\widehat{\otimes}_\mathbb{C}\, \mathscr{O}_{\mathrm{Sp}(X,\Theta)}$.
The action of $\mathscr{H}_\Gamma$ on $V$ extends $\mathscr{O}_{\mathrm{Sp}(X,\Theta)}$-linearly to an action of $\mathscr{H}_\Gamma \,\widehat{\otimes}_\mathbb{C}\, \mathscr{O}_{\mathrm{Sp}(X,\Theta)}$ on $V\otimes_\mathbb{C} \mathscr{O}_{\mathrm{Sp}(X,\Theta)}$. This induces an action of $\mathfrak{h}_\Gamma^{\mathrm{out}}(X,\Theta)$ on $V\otimes_\mathbb{C} \mathscr{O}_{\mathrm{Sp}(X,\Theta)}$. Then the sheaf $\widehat{\mathbb{V}}(X,\Theta,V)$ is defined as
\[
\widehat{\mathbb{V}}(X,\Theta,V) := V\otimes_\mathbb{C} \mathscr{O}_{\mathrm{Sp}(X,\Theta)} \,/\, \mathfrak{h}_\Gamma^{\mathrm{out}}(X,\Theta) \left( V\otimes_\mathbb{C} \mathscr{O}_{\mathrm{Sp}(X,\Theta)}\right).
\]

\subsection{Symplectic algebra action}
Recall the action of the metaplectic algebra $\mathfrak{mp}\left(H'\right)$ on $V$ from \S\ref{sec:mpaction}.
It is easy to see that the universal two-cocycle on $\mathfrak{sp}\left(H'\right)$ defining $\mathfrak{mp}\left(H'\right)$ vanishes on the Lie subalgebra $\mathfrak{sp}^+\left(H'\right)$ from \eqref{eq:sp+} --- see \cite[(1.8)]{avva}. Thus one has a Lie algebra splitting
\begin{equation}
\label{eq:sp+splitting}
\mathfrak{sp}^+\left(H'\right) \hookrightarrow \mathfrak{mp}\left(H'\right).
\end{equation}
This induces an action of $\mathfrak{sp}^+\left(H'\right)$ on $V$.

On the other hand, the differential of the transitive action of $\mathbb{S}\mathrm{p}^+\left(H' \right)$ on $\mathrm{Sp}(X,\Theta)$ from
Theorem \ref{thm:Sp+action} induces an action of $\mathfrak{sp}^+\left(H'\right)$ on $\mathrm{Sp}(X,\Theta)$.

The actions of $\mathfrak{sp}^+\left(H'\right)$ on $V$ and on $\mathrm{Sp}(X,\Theta)$ induce an action of $\mathfrak{sp}^+\left(H'\right)$ on $V\otimes_\mathbb{C}\mathscr{O}_{\mathrm{Sp}(X,\Theta)}$:
\[
S\left( A\otimes f\right) = S(A)\otimes f + A\otimes S(f)
\]
for $S\in \mathfrak{sp}^+\left(H'\right)$ and $A\otimes f$ a local section of $V\otimes_\mathbb{C}\mathscr{O}_{\mathrm{Sp}(X,\Theta)}$.
For an extended PPAV $(Z,F,L)$, recall the Lie algebra $\mathfrak{sp}^+_F\left( H'\right)$ from \eqref{eq:sp+F}.

\begin{theorem}
\label{thm:houtispreserved}
\begin{enumerate}[(i)]
\item The subsheaf 
\[
\mathfrak{h}_\Gamma^{\mathrm{out}}(X,\Theta) \left( V\otimes_\mathbb{C} \mathscr{O}_{\mathrm{Sp}(X,\Theta)}\right) \subset
V\otimes_\mathbb{C} \mathscr{O}_{\mathrm{Sp}(X,\Theta)}
\]
is preserved by the action of $\mathfrak{sp}^+\left(H'\right)$ on $V\otimes_\mathbb{C} \mathscr{O}_{\mathrm{Sp}(X,\Theta)}$.
Hence one has an induced action of $\mathfrak{sp}^+\left(H'\right)$ on the quotient $\widehat{\mathbb{V}}(X,\Theta,V)$.

\smallskip

\item For each $a=(Z,F,L)$ extending $(X,\Theta)$, the action of $\mathfrak{sp}^+\left(H'\right)$ on $\widehat{\mathbb{V}}(X,\Theta,V)$ induces a trivial action of $\mathfrak{sp}^+_F\left(H'\right)$ on the fiber $\widehat{\mathbb{V}}(a,V)$ of $\widehat{\mathbb{V}}(X,\Theta,V)$ at $a$.
\end{enumerate}
\end{theorem}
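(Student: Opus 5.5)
The plan is to reduce both parts to commutator computations inside $\mathscr{H}_\Gamma$, using the fact that the action of $\mathfrak{sp}^+(H')$ on $V$ comes from the splitting \eqref{eq:sp+splitting} followed by the map of Lemma \ref{lemma:mpaction}(ii), $b_mb_n\mapsto \sum_i H^i_mH^i_n$. For $S\in\mathfrak{sp}^+(H')$ and $A\otimes f\in \mathfrak{h}_\Gamma\otimes H'$, a direct computation in the Weyl algebra gives
\[
[\,S_V,\ A\otimes f\,] = A\otimes S(f),
\]
where $S_V$ is the operator by which $S$ acts on $V$ and $S(f)$ is the natural action of $\mathfrak{sp}(H')$ on $H'$. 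Normal-ordering constants are central, so they do not affect commutators, and the orthonormality of $\{H^i\}$ reproduces the bracket of $H$ in each $\mathfrak{h}_\Gamma$-direction. It is enough to check this on the generators $b_mb_n$ and then extend by continuity.

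\emph{Part (i).} Take a local section $\sum_k (A_k\otimes f_k)\, v_k$ of $\mathfrak{h}_\Gamma^{\mathrm{out}}(X,\Theta)(V\otimes\mathscr{O})$, where the $f_k$ are local sections of $\mathbb{F}$ and the $v_k$ are local sections of $V\otimes\mathscr{O}$. Apply $S$ using the Leibniz rule, in which $S$ acts on both $V$ and $\mathscr{O}_{\mathrm{Sp}(X,\Theta)}$. The result is
\[
(A\otimes f)\,S(v) + \bigl(A\otimes (S\cdot f)\bigr)\, v,
\]
where $S\cdot f$ is the combined action on $H'\widehat\otimes\mathscr{O}$: it acts on $H'$ and differentiates the coefficient functions along the vector field of $S$ on $\mathrm{Sp}(X,\Theta)$. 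The first term lies in the subsheaf, so the task is to show that $S\cdot f$ is again a section of $\mathbb{F}$. This is the infinitesimal form of the $\mathbb{S}\mathrm{p}^+(H')$-equivariance of the tautological sheaf, since $\rho$ sends $(Z,F,L)$ to $(\rho Z,\rho F,\rho L)$. To get it from the paper's statements, I would apply Corollary \ref{cor:sp+outpreserved}, using $\mathfrak{sp}^{+,\mathrm{out}}=\mathbb{F}\widehat\otimes H'_+$ from \eqref{eq:sp+out}. For $f\in\mathbb{F}$ and $h\in H'_+$ we have
\[
[S,fh]=S(f)h+fS(h), \qquad S(h)\in H'_+ ,
\]
and the coefficient derivatives distribute in the same way. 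Preservation of $\mathbb{F}\widehat\otimes H'_+$ for every $h$ therefore forces $S\cdot f\in\mathbb{F}$. This deduction is the step I expect to be the main obstacle: the symmetric-product identification $fh\leftrightarrow hf$ has to be handled carefully. Choosing $h$ with $\langle h, Z\rangle$-pairing nondegenerate against $F$ should let one extract $S\cdot f$. Once this is done, the action descends to the quotient $\widehat{\mathbb{V}}(X,\Theta,V)$.

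\emph{Part (ii).} At a point $a$, the elements of $\mathfrak{sp}^+_F(H')$ have vanishing vector field at $a$, so on the fiber they act only through $V$, that is, by the operators $S_V$. The algebra $\mathfrak{sp}^+_F(H')$ is topologically generated by $fb_j$ with $f\in F$ and $j>0$, which act as $\sum_i (H^i\otimes f)(H^i\otimes t^j)$ with suitable normal ordering. Because $F\subset Z$ and $Z\cap H'_+=0$, we have $\langle f,t^j\rangle$ generally nonzero, so reordering gives
\[
\sum_i (H^i\otimes f)(H^i\otimes t^j) + \text{constant}.
\]
I expect the splitting \eqref{eq:sp+splitting} to be the normal-ordered one, as in \cite[Prop.~5.3]{avva}, so that the constant vanishes. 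The resulting operator has image in $\mathfrak{h}_\Gamma^{\mathrm{out}}(a)(V)$, since the leftmost factor lies in $\mathfrak{h}_\Gamma\otimes F$. Hence these generators act by zero on the fiber $\widehat{\mathbb{V}}(a,V)$. Finally, continuity extends the statement to all of $\mathfrak{sp}^+_F$: each vector of $V$ is killed by all but finitely many $b_j$ with $j$ large.
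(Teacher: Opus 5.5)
Your proposal is correct. Part (ii) is essentially the paper's argument, but part (i) takes a different route.

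\textbf{What the paper does in part (i).} The paper does not use the identity $[S_V, A\otimes f]=A\otimes S(f)$. It enlarges the outgoing algebra to the quadratic sheaf $\mathfrak{sp}_\Gamma^{+,\mathrm{out}}(X,\Theta)=(\mathfrak{h}_\Gamma\otimes\mathbb{F})\widehat{\otimes}(\mathfrak{h}_\Gamma\widehat{\otimes}H'_+)$, acting through normally ordered products. It then proves in \eqref{eq:houtissp+out} that this sheaf has the same image in $V\otimes\mathscr{O}_{\mathrm{Sp}(X,\Theta)}$ as $\mathfrak{h}_\Gamma^{\mathrm{out}}(X,\Theta)$, using the surjectivity $(\mathfrak{h}_\Gamma\widehat{\otimes}H'_+)(V)=V$. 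Invariance is then read off from Corollary \ref{cor:sp+outpreserved} in rank one, plus linearity in $\mathfrak{h}_\Gamma$. Part (ii) follows because $\mathfrak{sp}^+_F(H')$ lies in the fiber of that quadratic sheaf, which is exactly your factorization $:\!fb_j\!:\,=f\,b_j$. Your hedge there is right: with the normal-ordered splitting, as in \eqref{eq:sp+outGammaintoHO}, the factor $b_j$ with $j>0$ already stands to the right, so no constant arises.

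\textbf{What each route buys.} The paper phrases the argument in terms of isotropy Lie algebras, and reuses that form in Theorems \ref{thm:houtispreservedmp} and \ref{thm:houtispreservedU2H}. Your route treats all ranks at once and isolates the actual geometric input, namely the $\mathfrak{sp}^+(H')$-stability of $\mathbb{F}$. It also avoids the surjectivity of annihilation operators. That surjectivity holds for $\pi_\Gamma$ but fails for $\pi^0_\Gamma$, where $\mathfrak{h}_\Gamma\widehat{\otimes}H'_+$ acts by zero, so \eqref{eq:houtissp+out} cannot hold there. For $\pi^0_\Gamma$, however, your identity should be read with $\mathfrak{sp}^+(H')$ acting on $\mathrm{Sym}(\mathfrak{h}_\Gamma\otimes H_-)$ by the derivations induced from its action on $H'/H'_+$. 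The level-one Weyl algebra computation does not apply there.

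\textbf{The step you flag as the main obstacle.} It is not a real obstacle. The direct argument you already give suffices: if $f(b)\in F_b$ for all $b$, then $\rho f(\rho^{-1}a)\in \rho F_{\rho^{-1}a}=F_a$, and differentiating gives $S\cdot f\in\mathbb{F}$.

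The extraction from Corollary \ref{cor:sp+outpreserved} also works pointwise with any single nonzero $h\in H'_+$. Write $H'=F\oplus C\oplus H'_+$ with $C$ a complement of $F$ in $Z$. The $C$- and $H'_+$-components of $g=S\cdot f$ contribute to $gh$ in the summands $C\cdot H'_+$ and $\mathrm{Sym}^2H'_+$. These summands meet $F\cdot H'_+$ trivially, so (as $h\neq 0$ and $\mathrm{Sym}$ has no zero divisors) both components vanish. No nondegeneracy condition on $h$ is needed.
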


\begin{proof}
Define
\begin{equation}
\label{eq:sp+outGamma}
\mathfrak{sp}_{\Gamma}^{+, \mathrm{out}}\left(X,\Theta\right) := 
\left(\mathfrak{h}_\Gamma \otimes_{\mathbb{C}} \mathbb{F} \right)
\widehat{\otimes}_{\mathbb{C}} 
\left(\mathfrak{h}_\Gamma \,\widehat{\otimes}_\mathbb{C}\, H'_+\right).
\end{equation}
The natural map
\begin{equation}
\label{eq:sp+outGammaintoHO}
\mathfrak{sp}_{\Gamma}^{+, \mathrm{out}}\left(X,\Theta\right)  \rightarrow \mathscr{H}_\Gamma \,\widehat{\otimes}_\mathbb{C}\, \mathscr{O}_{\mathrm{Sp}(X,\Theta)}, \,\,\,
(A\otimes f)\otimes (B\otimes h) \mapsto :\! (A\otimes f)\otimes (B\otimes h)\!:
\end{equation}
realizes $\mathfrak{sp}_{\Gamma}^{+, \mathrm{out}}\left(X,\Theta\right) $ as a subsheaf of the Lie algebra sheaf  \sloppy{$\mathscr{H}_\Gamma \,\widehat{\otimes}_\mathbb{C}\, \mathscr{O}_{\mathrm{Sp}(X,\Theta)}$.}
The $\mathscr{O}_{\mathrm{Sp}(X,\Theta)}$-linear action of $\mathscr{H}_\Gamma \,\widehat{\otimes}_\mathbb{C}\, \mathscr{O}_{\mathrm{Sp}(X,\Theta)}$ on \mbox{$V\otimes_\mathbb{C} \mathscr{O}_{\mathrm{Sp}(X,\Theta)}$} induces an action of $\mathfrak{sp}_{\Gamma}^{+,\mathrm{out}}\left(X,\Theta\right)$ on $V\otimes_\mathbb{C} \mathscr{O}_{\mathrm{Sp}(X,\Theta)}$.

To show the statement, first we argue that 
\begin{equation}
\label{eq:houtissp+out}
\mathfrak{h}_\Gamma^{\mathrm{out}}(X,\Theta) \left( V\otimes_\mathbb{C} \mathscr{O}_{\mathrm{Sp}(X,\Theta)}\right) = 
\mathfrak{sp}_{\Gamma}^{+,\mathrm{out}}\left(X,\Theta\right) \left( V\otimes_\mathbb{C} \mathscr{O}_{\mathrm{Sp}(X,\Theta)}\right).
\end{equation}
Indeed, by the inclusion \eqref{eq:sp+outGammaintoHO}, one clearly has
\[
\mathfrak{h}_\Gamma^{\mathrm{out}}(X,\Theta) \left( V\otimes_\mathbb{C} \mathscr{O}_{\mathrm{Sp}(X,\Theta)}\right) \supseteq 
\mathfrak{sp}_{\Gamma}^{+,\mathrm{out}}\left(X,\Theta\right) \left( V\otimes_\mathbb{C} \mathscr{O}_{\mathrm{Sp}(X,\Theta)}\right).
\]
The opposite inclusion follows from the identity
\[
\left(\mathfrak{h}_\Gamma \,\widehat{\otimes}_\mathbb{C}\, H'_+\right) (V) = V.
\]
This proves \eqref{eq:houtissp+out}.

We proceed by showing that the subsheaf 
\[
\mathfrak{sp}_{\Gamma}^{+, \mathrm{out}}\left(X,\Theta\right)
\left( V\otimes_\mathbb{C} \mathscr{O}_{\mathrm{Sp}(X,\Theta)}\right) \subset
V\otimes_\mathbb{C} \mathscr{O}_{\mathrm{Sp}(X,\Theta)}
\]
is preserved by the action of $\mathfrak{sp}^+\left(H'\right)$ on $V\otimes_\mathbb{C} \mathscr{O}_{\mathrm{Sp}(X,\Theta)}$.

When $\mathrm{rank}(\Gamma) = 1$, the sheaf of Lie algebras \eqref{eq:sp+outGamma} recovers \eqref{eq:sp+out}, that is, one has 
\[
\mathfrak{sp}_{\Gamma}^{+, \mathrm{out}}\left(X,\Theta\right) = \mathfrak{sp}^{+, \mathrm{out}}\left(X,\Theta\right).
\]
In this case, the statement follows from Corollary \ref{cor:sp+outpreserved}.

When $\mathrm{rank}(\Gamma) > 1$, let
\[
\mathfrak{sp}_{\Gamma}^+\left(H' \right) 
\]
be the Lie subalgebra of $\mathscr{H}_\Gamma$ topologically generated by $X_iY_j$ with $X,Y\in\mathfrak{h}_\Gamma$ and $i,j$ not both negative. Corollary \ref{cor:sp+outpreserved} implies by linearity on $\mathfrak{h}_\Gamma$ that 
$\mathfrak{sp}^{+, \mathrm{out}}\left(X,\Theta\right)$ is preserved by the natural action of $\mathfrak{sp}_{\Gamma}^+\left(H' \right) $ on \sloppy{$\mathfrak{sp}_{\Gamma}^+\left(H' \right)  \,\widehat{\otimes}_\mathbb{C}\, \mathscr{O}_{\mathrm{Sp}(X,\Theta)}$.}
Since the action of $\mathfrak{sp}_{\Gamma}^+\left(H' \right)$  restricts to the action of $\mathfrak{sp}^+\left(H' \right) $ on $\mathfrak{sp}_{\Gamma}^+\left(H' \right)  \,\widehat{\otimes}_\mathbb{C}\, \mathscr{O}_{\mathrm{Sp}(X,\Theta)}$, the statement follows, hence part (i).

For $a=(Z,F,L)$ extending $(X,\Theta)$,
one has that $\mathfrak{sp}^+_F\left(H'\right)$ is a Lie subalgebra of the fiber of the sheaf $\mathfrak{sp}_{\Gamma}^{+, \mathrm{out}}\left(X,\Theta\right)$ at $a$. Thus part (ii) follows from \eqref{eq:houtissp+out}.
\end{proof}

\subsection{Equivariant structure}
We thus have an induced action of $\mathfrak{sp}^+\left(H'\right)$ on the sheaf $\widehat{\mathbb{V}}(X,\Theta,V)$. Next, we show that this action can be exponentiated:

\begin{theorem}
\label{thm:expactionSp+}
\begin{enumerate}[(i)]
\item The action of $\mathfrak{sp}^+ \left( H' \right)$ on $\widehat{\mathbb{V}}(X,\Theta,V)$ can be exponentiated  to an $\mathbb{S}\mathrm{p}^+\left(H' \right)$-equivariant structure on $\widehat{\mathbb{V}}(X,\Theta,V)$; 

\smallskip

\item For each $a=(Z,F,L)$ extending $(X,\Theta)$, the $\mathbb{S}\mathrm{p}^+\left(H' \right)$-equivariant structure on $\widehat{\mathbb{V}}(X,\Theta,V)$ induces a trivial action of $\mathbb{S}\mathrm{p}^+_F\left(H' \right)$ on the fiber $\widehat{\mathbb{V}}(a,V)$ of $\widehat{\mathbb{V}}(X,\Theta,V)$ at $a$.
\end{enumerate}
\end{theorem}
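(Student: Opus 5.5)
We exponentiate the action of $\mathfrak{sp}^+\left(H'\right)$ on the fibers $\widehat{\mathbb{V}}(a,V)$, in analogy with the exponentiation of the $\mathrm{Der}_0(\mathbb{O})$-action to $\mathbb{A}\mathrm{ut}(\mathbb{O})$ in the curve case \cite[\S6.3]{bzf}. The obstruction on $V$ itself comes from the operators $\sum_i H^i_mH^i_n$ with $m<0<n$, which do not act locally finitely on $V$. The plan is to show that they do act locally nilpotently, or at least locally finitely, on the quotients by $\mathfrak{h}_\Gamma^{\mathrm{out}}(a)(V)$.

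First I will decompose $\mathfrak{sp}^+\left(H'\right)$ as the semidirect sum of the subalgebra topologically generated by $b_ib_j$ with $i,j>0$ (the ``positive part'' $\mathfrak{n}$), the elements $b_mb_n$ with $m<0<n$, and the $b_0$-free elements with $m>0$. Modulo $\mathrm{Lie}$ of the stabilizer, each element is a sum of an element of $\mathfrak{sp}^+_F\left(H'\right)$ and an element of a finite-dimensional complement. Using $H' = Z\oplus H'_+$ and $Z = F\oplus (Z/F)$ with $\dim Z/F = g$, $\mathfrak{sp}^+\left(H'\right)/\mathfrak{sp}^+_F\left(H'\right)$ is identified with $T_a\mathrm{Sp}(X,\Theta)$.

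The key step is local finiteness on the fiber. By Example~\ref{ex:pi0Gamma} and the PBW description, $\widehat{\mathbb{V}}(a,V)$ is spanned by images of monomials in $\mathfrak{h}_\Gamma\otimes$ (lifts of $Z/F$) applied to $\bm{1}$. These carry a filtration by polynomial degree. An element $S\in\mathfrak{sp}^+\left(H'\right)$ acts by a quadratic expression in which at least one factor lies in $H'_+$. Commuting that factor through the monomial, we see that $S$ raises degree by at most $0$: it either annihilates $\bm{1}$ or contracts against a $Z$-factor via the pairing. Terms landing in $F$ are killed modulo $\mathfrak{h}_\Gamma^{\mathrm{out}}(a)(V)$, using \eqref{eq:houtissp+out} and Theorem~\ref{thm:houtispreserved}(i). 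Hence each finite-degree piece of the fiber is a finite-dimensional $\mathfrak{sp}^+\left(H'\right)$-stable subspace, since $Z/F$ is finite-dimensional, and the induced action on the associated graded factors through $\mathfrak{gl}(Z/F)$ together with nilpotent parts. On each such piece the Lie algebra action integrates to the simply connected cover of the relevant finite-dimensional quotient group. Compatibility with the geometric action on $\mathrm{Sp}(X,\Theta)$, with $\mathbb{S}\mathrm{p}^+\left(H'\right)$ connected and generated by exponentials, then gives the equivariant structure globally. This is done by working on the sheaf $\widehat{\mathbb{V}}(X,\Theta,V)$ filtered by the same degree, which is locally free of finite rank in each filtration step, so that the standard integration of a Harish-Chandra pair action applies.

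For (ii), Theorem~\ref{thm:houtispreserved}(ii) gives the triviality of $\mathfrak{sp}^+_F\left(H'\right)$ on the fiber. The group $\mathbb{S}\mathrm{p}^+_F\left(H'\right)$ is generated by exponentials of its Lie algebra if it is connected, and then acts trivially. Here I must check connectedness, or else handle components directly. Elements of $\mathbb{S}\mathrm{p}^+_F\left(H'\right)$ act as the identity on $F^\perp/F\supset Z/F$, and the fiber is generated by $Z/F$-monomials modulo $F$, so the group acts trivially on generators. This gives a direct argument avoiding the connectedness question.

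The main obstacle I expect is the integration step. One must show that the action is genuinely of finite type: well-defined on the topological completions, with $\mathbb{S}\mathrm{p}^+\left(H'\right)$ integrating and not merely its universal cover. It should work because the finite-dimensional quotients through which the action factors are unipotent extensions of $\mathrm{GL}(Z/F)$-type actions. Polynomial representations of $\mathrm{GL}$ integrate without covering issues, which is also why the central-charge obstruction visible on $V$ disappears on the quotient.
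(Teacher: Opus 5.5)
There is a genuine gap in the integration step for (i), and it comes from letting $\mathfrak{sp}^+(H')$ act on the fiber $\widehat{\mathbb{V}}(a,V)$. It does not. Take $z\in Z\setminus F$ and $h\in H'_+$ with $\langle h,f\rangle\neq 0$ for some $f\in F$. Then $S=zh\in\mathfrak{sp}^+(H')$ satisfies $S(f)=\pm\langle h,f\rangle z\notin F$, so $S(f\bm{1})=\pm\langle h,f\rangle z\bm{1}\notin F\cdot V$. On the sheaf, $S$ acts only as a first-order operator lifting a nonzero tangent vector at $a$. Only the isotropy algebra $\mathfrak{sp}^+_F(H')$ acts on the fiber, and by Theorem~\ref{thm:houtispreserved}(ii) it acts trivially. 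Moreover, $\mathfrak{sp}^+(H')/\mathfrak{sp}^+_F(H')\cong T_a\mathrm{Sp}(X,\Theta)$ is infinite-dimensional. In the paper's decomposition $\mathfrak{sp}^+(H')=\widetilde{S}^2(H'_+)\times(Z/F\,\widehat{\otimes}_{\mathbb{C}}\,H'_+)\times\mathfrak{sp}^+_F(H')$ the complement is infinite-dimensional; only $Z/F$ is finite-dimensional. So there are no finite-dimensional $\mathfrak{sp}^+(H')$-stable subspaces of the fiber, and no finite-dimensional quotient group through which the action factors. Your ``$\mathfrak{gl}(Z/F)$ plus nilpotent parts'' picture describes the stabilizer of $F$, which acts on $V/F\cdot V$ through the Siegel parabolic of $\mathrm{Sp}(F^\perp/F)$; it does not describe $\mathbb{S}\mathrm{p}^+(H')$. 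What survives is that the degree filtration gives finite-rank $\mathfrak{sp}^+(H')$-stable subsheaves. But integrating a Lie algebra action on a bundle over a homogeneous space, even with trivial isotropy, to the group rather than to a covering requires an integrality check. Already $z\partial_z+\lambda$ on the trivial line bundle over $\mathbb{C}^\times$ integrates to $\mathbb{C}^\times$ only for $\lambda\in\mathbb{Z}$. Integrality of polynomial $\mathrm{GL}(Z/F)$-representations says nothing about the loops that matter here. Those are the circle subgroups of $\mathbb{S}\mathrm{p}^+(H')$ generated by the $b_{-n}b_n$ and their combinations, e.g.\ the rotations $t\mapsto e^{i\theta}t$ in $\mathbb{A}\mathrm{ut}(\mathbb{O})$.

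That integrality is exactly the input the paper supplies. It first reduces to the generators $b_ib_j$ with $i,j$ not both negative, using the decomposition above, the trivial action of $\mathfrak{sp}^+_F(H')$, the finite-dimensionality of $Z/F$ and the Lie product formula. It then observes that for $i+j\neq 0$ these act locally nilpotently, so the exponential is locally a finite sum. For $i+j=0$ they act diagonalizably with integer eigenvalues, so they integrate to honest $\mathbb{G}_m$-actions. Relatedly, your diagnosis of the obstruction on $V$ is off. Individual $b_mb_n$ with $m<0<n$ already behave as just described on $V$. What fails is local finiteness of infinite sums such as $\sum_{k\geq 1}b_{-k-1}b_k$, which sends $b_{-1}\bm{1}\mapsto b_{-2}\bm{1}\mapsto 2b_{-3}\bm{1}\mapsto\cdots$. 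There is no central-charge obstruction on $\mathfrak{sp}^+(H')$, since the cocycle vanishes there, cf.~\eqref{eq:sp+splitting}.

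Your argument for (ii) is a genuinely different route. Since $\mathbb{S}\mathrm{p}^+_F(H')$ is the identity on $F^\perp/F\supseteq Z/F$, it fixes the generators of $\mathrm{Sym}(\mathfrak{h}_\Gamma\otimes_{\mathbb{C}}Z/F)$. This avoids the connectedness of $\mathbb{S}\mathrm{p}^+_F(H')$, which the paper's passage from Theorem~\ref{thm:houtispreserved}(ii) to the group tacitly needs. However, it presupposes that the fiberwise action produced by your integration is $z_1\cdots z_k\bm{1}\mapsto\rho(z_1)\cdots\rho(z_k)\bm{1}$, and hence multiplicative. You would still have to establish that.
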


\begin{proof}
Since $\mathbb{S}\mathrm{p}^+\left(H' \right)$ is by definition connected, elements in $\mathbb{S}\mathrm{p}^+\left(H' \right)$ are products of exponentials of elements in $\mathfrak{sp}^+\left( H'\right)$. To show the statement, it is thus enough to show that the action of every element in $\mathfrak{sp}^+\left( H'\right)$ can be exponentiated.

Select $a=(Z,F,L)$ in $\mathrm{Sp}(X,\Theta)$. One has a decomposition
\[
\mathfrak{sp}^+\left( H'\right) = \widetilde{S}^2\left( H'_+\right) \times \left( Z/F \,\widehat{\otimes}_{\mathbb{C}}\,H'_+ \right) \times \mathfrak{sp}^+_F\left( H'\right)
\]
as vector spaces.

From Theorem \ref{thm:houtispreserved}(ii), the Lie subalgebra $\mathfrak{sp}^+_F\left( H'\right)$ acts trivially on the fiber $\widehat{\mathbb{V}}(a,V)$ of $\widehat{\mathbb{V}}(X,\Theta,V)$ at $a$.
Hence $\mathbb{S}\mathrm{p}^+_F\left(H' \right)$ acts trivially on $\widehat{\mathbb{V}}(a,V)$. 

Moreover, the space $Z/F$ has finite dimension (equal to $g=\dim X$).
Thus by the Lie product formula, it will be enough to show that it is possible to exponentiate the action of elements of type $b_ib_j$ with $i$ and $j$ not both negative.

Assume first that $\mathrm{rank}(\Gamma) = 1$. Consider $b_ib_j$ with $i$ and $j$ not both negative and $i+j\neq 0$.
In this case, $b_i$ and $b_j$ commute, and the action of $b_ib_j$ is locally nilpotent, thus it can be exponentiated, as the formula for the exponential of the action is locally a finite sum.
Consider instead $b_ib_j$ with $i+j=0$. In this case, the action of $b_ib_j$ is diagonalizable with integral eigenvalues.
This action exponentiates to an action of the multiplicative group $\mathbb{G}_m$ such that $a\in \mathbb{G}_m$ acts as multiplication by $a^k$ on the eigenspace for $b_ib_j$ of eigenvalue $k$.

Finally, the case $\mathrm{rank}(\Gamma) > 1$ follows by linearity on $\mathfrak{h}_\Gamma$. 
\end{proof}

\subsection{Spaces of coinvariants for PPAVs}
\label{sec:coinvonPPAVs}
From Theorem \ref{thm:expactionSp+}(i), the sheaf $\widehat{\mathbb{V}}(X,\Theta,V)\rightarrow \mathrm{Sp}(X,\Theta)$ is 
$\mathbb{S}\mathrm{p}^+\left(H' \right)$-equivariant. Thus it descends to a sheaf $[\widehat{\mathbb{V}}(X,\Theta,V)/ \mathbb{S}\mathrm{p}^+\left(H' \right) ]$ on the quotient stack $[\mathrm{Sp}(X,\Theta)/\mathbb{S}\mathrm{p}^+\left(H' \right)]$.

Moreover, from Theorem \ref{thm:expactionSp+}(ii), the stabilizer $\mathbb{S}\mathrm{p}^+_F\left(H' \right)$ of a point $(Z,F,L)$ in $\mathrm{Sp}(X,\Theta)$ acts trivially on $\widehat{\mathbb{V}}(X,\Theta,V)$. Applying \cite[Thm 10.3]{alper2013good}, the sheaf 
$[\widehat{\mathbb{V}}(X,\Theta,V)/ \mathbb{S}\mathrm{p}^+\left(H' \right) ]$ further descends to a vector space, which we denote
\[
\mathbb{V}(X,\Theta,V).
\]
We define this to be the \textit{space of coinvariants assigned to the PPAV $(X, \Theta)$}.

To summarize, this is the diagram of descents:
\[
\begin{tikzcd}
\widehat{\mathbb{V}}(X,\Theta,V) \arrow[]{r} \arrow[]{d} & \left[\widehat{\mathbb{V}}(X,\Theta,V)/ \mathbb{S}\mathrm{p}^+\left(H' \right) \right]\arrow[]{r} \arrow[]{d} & \mathbb{V}(X,\Theta,V) \arrow[]{d}\\
\mathrm{Sp}(X,\Theta) \arrow[]{r} & \left[\mathrm{Sp}(X,\Theta)/\mathbb{S}\mathrm{p}^+\left(H' \right)\right] \arrow[]{r} & \mathrm{Spec}\,(\mathbb{C}). 
\end{tikzcd}
\]

Next, we show how this construction of spaces of coinvariants recovers the familiar spaces over the Jacobian locus.
Recall the notation from \S\ref{sec:curvecasedescent}.

\begin{theorem}
\label{thm:VonTorelli}
When $(X,\Theta)$ is the Jacobian of a curve $C$, one has
\[
\mathbb{V}(X,\Theta,V) = \mathbb{V}(C,V).
\]
\end{theorem}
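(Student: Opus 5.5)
The plan is to compare the two descent procedures through the inclusion $\mathrm{Aut}(C)\hookrightarrow \mathrm{Sp}(X,\Theta)$ of \S\ref{sec:homogspaces}, and to check that restricting the $\mathbb{S}\mathrm{p}^+(H')$-equivariant sheaf $\widehat{\mathbb{V}}(X,\Theta,V)$ to $\mathrm{Aut}(C)$ gives back the $\mathbb{A}\mathrm{ut}(\mathbb{O})$-equivariant sheaf $\widehat{\mathbb{V}}(C,V)$. Lemma \ref{lemma:VhatonTorelli} already identifies the fibers pointwise: for $(C,P,t)$ with extended Jacobian $a$, we have $\widehat{\mathbb{V}}(a,V)=\widehat{\mathbb{V}}(C,P,t,V)$. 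Because the tautological sheaf $\mathbb{F}$ pulls back along $\mathrm{Aut}(C)\hookrightarrow\mathrm{Sp}(X,\Theta)$ to the sheaf with fibers $\mathscr{O}(C\setminus P)$ (see \eqref{eq:FonTorellilocus}), this identification globalizes. We therefore get an isomorphism of quasi-coherent sheaves
\[
\widehat{\mathbb{V}}(X,\Theta,V)|_{\mathrm{Aut}(C)}\cong \widehat{\mathbb{V}}(C,V).
\]

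The next step is to match the equivariant structures. The group $\mathbb{A}\mathrm{ut}(\mathbb{O})$ sits inside $\mathbb{S}\mathrm{p}^+(H')$, and $\mathrm{Der}_0(\mathbb{O})\subset\mathfrak{sp}^+(H')$. By Lemma \ref{lemma:mpaction}(iii),(iv), together with the splitting \eqref{eq:sp+splitting}, the action of $\mathfrak{sp}^+(H')$ on $V$ restricts on $\mathrm{Der}_0(\mathbb{O})$ to the action coming from the standard conformal vector. I need to check that the splitting \eqref{eq:sp+splitting} agrees with the Virasoro lift on $\mathrm{Der}_0(\mathbb{O})$: the normally ordered quadratic expressions for $L_p$ with $p\ge 0$ coincide with the plain products $b_mb_n$, since no reordering constant appears. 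The infinitesimal action of $\mathrm{Der}_0(\mathbb{O})$ on $\mathrm{Aut}(C)$ is the restriction of the $\mathfrak{sp}^+(H')$-action on $\mathrm{Sp}(X,\Theta)$, because the extended Torelli map is equivariant for coordinate changes. Hence the two Lie algebra actions on the restricted sheaf agree. Since $\mathbb{A}\mathrm{ut}(\mathbb{O})$ is connected, the exponentiated equivariant structures of Theorem \ref{thm:expactionSp+} and \cite[Thm 17.3.11]{bzf} also agree.

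To finish, I would compare the quotients. Descending along $\mathbb{A}\mathrm{ut}(\mathbb{O})$ gives a bundle on $C$ whose fiber at $P$ is $\mathbb{V}(C,P,V)$, and propagation of vacua identifies all these fibers with $\mathbb{V}(C,V)$. On the other side, $\mathbb{V}(X,\Theta,V)$ is the fiber at any point $a$ of $\widehat{\mathbb{V}}(X,\Theta,V)$, taken modulo the stabilizer, which acts trivially by Theorem \ref{thm:expactionSp+}(ii). Fixing a point $(C,P,t)$ then gives natural maps
\[
\widehat{\mathbb{V}}(C,P,t,V)\to\mathbb{V}(C,P,V)\qquad\text{and}\qquad \widehat{\mathbb{V}}(a,V)\to\mathbb{V}(X,\Theta,V).
\]
Both are isomorphisms, because in each case the descended object is identified with a fiber, using the free action in the first case and the trivial stabilizer action in the second. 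It remains to show that the resulting identification $\mathbb{V}(C,P,V)\cong\mathbb{V}(X,\Theta,V)$ does not depend on the choice of $P$ and $t$, and that it is compatible with propagation of vacua. Changing $t$ is handled by the $\mathbb{A}\mathrm{ut}(\mathbb{O})$-equivariance established above. Moving $P$ corresponds to the action of $L_{-1}$, which lies in $\mathrm{Witt}\subset\mathfrak{sp}(H')$ but not in $\mathfrak{sp}^+(H')$.

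I expect this last compatibility to be the main obstacle. The plan is to connect any two points of $\mathrm{Aut}(C)$ by elements of $\mathbb{S}\mathrm{p}^+(H')$, using Theorem \ref{thm:Sp+action}(i), and to show that the induced isomorphism of fibers agrees with the propagation-of-vacua isomorphism. For this I would use the description in \cite[\S10.3]{bzf} of propagation through the flat connection coming from $L_{-1}$. In the PPAV picture, moving $P$ does not change $(X,\Theta)$, so $L_{-1}$ acts on $\mathrm{Sp}(X,\Theta)$ through some element of $\mathfrak{sp}^+(H')$ modulo the isotropy algebra $\mathfrak{sp}_F(H')$. By Theorem \ref{thm:houtispreserved}(ii) this isotropy part acts trivially on coinvariants, so the two parallel transports should coincide.
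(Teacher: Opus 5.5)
Your overall route matches the paper's. You restrict $\widehat{\mathbb{V}}(X,\Theta,V)$ along $\mathrm{Aut}(C)\hookrightarrow\mathrm{Sp}(X,\Theta)$ to get $\widehat{\mathbb{V}}(C,V)$, show that the $\mathbb{S}\mathrm{p}^+(H')$-equivariant structure extends the $\mathbb{A}\mathrm{ut}(\mathbb{O})$-structure together with independence of $P$, and compare the two descents. The paper simply asserts the step you single out as the main obstacle, but your justification of that step rests on a false claim.

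\textbf{The error.} $L_{-1}=-\partial_t$ \emph{does} lie in $\mathfrak{sp}^+(H')$. It sends $t^k\mapsto -k\,t^{k-1}$, and for $k=1$ the image is a multiple of $b_0$, which is zero in $H'=H/\mathbb{C}b_0$. Equivalently, $L_{-1}=\sum_{n\ge 1}b_{-1-n}b_n$ involves only generators with a positive index. More generally, $\mathrm{Der}(\mathbb{O})=\mathbb{C}\llbracket t\rrbracket\partial_t\subset\mathfrak{sp}^+(H')$.

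\textbf{Consequence for your last step.} Under your premise the final argument would not go through. The $\mathfrak{sp}_F(H')$-component $T=L_{-1}-S$ would lie outside $\mathfrak{sp}^+(H')$, so Theorem~\ref{thm:houtispreserved}(ii) would not apply, since it only concerns $\mathfrak{sp}^+_F(H')$. Switching to Theorem~\ref{thm:houtispreservedmp}(ii) would force you to control a possible central discrepancy between the $\mathfrak{mp}(H')$-lift of $L_{-1}$ and the lifts of $S$ and $T$. That discrepancy acts on coinvariants by a multiple of $d$ and would a priori shift the induced connection by a scalar.

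\textbf{The fix.} Once corrected, the obstacle disappears and no decomposition is needed. The pair $(\mathrm{Der}(\mathbb{O}),\mathbb{A}\mathrm{ut}(\mathbb{O}))$ that \cite{bzf} uses on $\mathrm{Aut}(C)$ sits inside $(\mathfrak{sp}^+(H'),\mathbb{S}\mathrm{p}^+(H'))$, compatibly with the actions on $\mathrm{Aut}(C)\subset\mathrm{Sp}(X,\Theta)$.
\begin{itemize}
\item Since $\mathrm{Der}(\mathbb{O})$ is perfect, the restriction of the splitting \eqref{eq:sp+splitting} to it must be the Virasoro lift $\{L_p\}_{p\ge -1}$.
\item This is also a cleaner version of your check on $\mathrm{Der}_0(\mathbb{O})$. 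There, ``no reordering constant appears'' is not accurate for $L_0$, and a splitting over the non-perfect $\mathrm{Der}_0(\mathbb{O})$ is only unique up to a character.
\item Hence the $\mathfrak{sp}^+(H')$-action on the restricted sheaf is exactly the $\mathrm{Der}(\mathbb{O})$-action that produces the flat connection along $C$.
\item Since stabilizers act trivially, the $\mathbb{S}\mathrm{p}^+(H')$-identifications of fibers are path-independent and horizontal for that connection. So they coincide with the propagation-of-vacua isomorphisms.
\end{itemize}
This is precisely what the paper means by saying the $\mathbb{S}\mathrm{p}^+(H')$-equivariant structure ``extends the combination'' of the $\mathbb{A}\mathrm{ut}(\mathbb{O})$-structure with invariance in $P$.
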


\begin{proof}
Assume $(X,\Theta)$ is the Jacobian of $C$.
For a choice of an extended PPAV $a=(Z,F,L)$ with underlying PPAV $(X,\Theta)$ and a coordinatized curve $(C,P,t)$ for the fixed curve $C$,
Lemma \ref{lemma:VhatonTorelli} yields $\widehat{\mathbb{V}}(a,V) = \widehat{\mathbb{V}}(C,P,t,V)$.
More generally, the bundle $\widehat{\mathbb{V}}(X,\Theta,V)\rightarrow \mathrm{Sp}(X,\Theta)$ restricts to the bundle
$\widehat{\mathbb{V}}(C,V)\rightarrow \mathrm{Aut}(C)$ via the inclusion $\mathrm{Aut}(C)\hookrightarrow \mathrm{Sp}(X,\Theta)$

Moreover, the $\mathbb{S}\mathrm{p}^+\left(H' \right)$-equivariant structure on $\widehat{\mathbb{V}}(X,\Theta,V)$ extends the combination of the $\mathbb{A}\mathrm{ut}(\mathbb{O})$-equivariant structure on $\widehat{\mathbb{V}}(C,V)$ with the invariance of the spaces $\mathbb{V}(C,P,V)$ with respect to $P$ in $C$.
Thus one has a commutative diagram as in Figure \ref{fig:2descents}.

\begin{figure}[t]
\begin{tikzcd}
&\widehat{\mathbb{V}}(X,\Theta,V) \arrow{dd} \arrow{rr} && \mathbb{V}(X,\Theta,V) \arrow{dd}\\
\widehat{\mathbb{V}}(C,V) \arrow{ru}\arrow{dd} \arrow[crossing over]{rr}&& \mathbb{V}(C,V) \arrow{ru}\\
&\mathrm{Sp}(X,\Theta) \arrow{rr} && \mathrm{Spec}\,(\mathbb{C})\\
\mathrm{Aut}(C) \arrow[hookrightarrow]{ru} \arrow{rr}&& \mathrm{Spec}\,(\mathbb{C}) \arrow{ru} \arrow[crossing over, leftarrow]{uu}. 
\end{tikzcd}
\caption{The two descents for a Jacobian.}
\label{fig:2descents}
\end{figure}
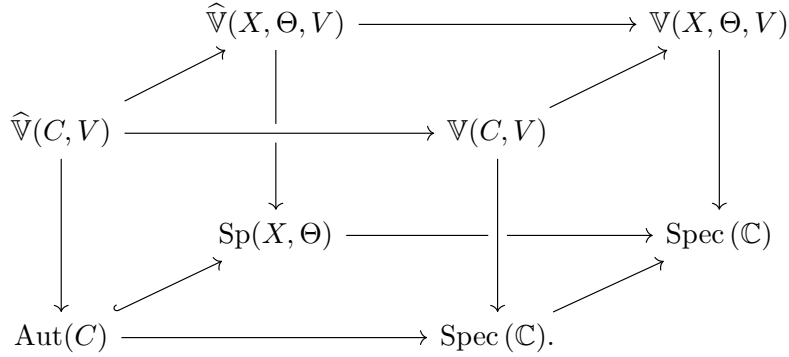

This implies that the descent of $\widehat{\mathbb{V}}(X,\Theta,V)$ along $\mathrm{Sp}(X,\Theta) \rightarrow \mathrm{Spec}\,(\mathbb{C})$ extends the descent of $\widehat{\mathbb{V}}(C,V)$ along \sloppy{$\mathrm{Aut}(C) \rightarrow \mathrm{Spec}\,(\mathbb{C})$.} Hence the statement.
\end{proof}

\subsection{Functional realization}
We conclude this section with the functional realization of the spaces of coinvariants for $V=\pi^0_\Gamma$:

\begin{theorem}
\label{thm:Vpi0Gamma}
For a PPAV $(X,\Theta)$, one has
a canonical isomorphism
\[
\mathbb{V}\left(X,\Theta, \pi^0_\Gamma\right) \cong \mathrm{Sym}\left( \mathfrak{h}_\Gamma\otimes_{\mathbb{C}} H^0(X, \Omega^1_X)^*\right).
\]
Equivalently, $\mathbb{V}\left(X,\Theta, \pi^0_\Gamma\right) \cong \mathrm{Fun}\left( \mathfrak{h}_\Gamma\otimes_{\mathbb{C}} H^0(X, \Omega^1_X)\right)$.
\end{theorem}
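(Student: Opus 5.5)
The plan is to compute the fibers of $\widehat{\mathbb{V}}(X,\Theta,\pi^0_\Gamma)$ on $\mathrm{Sp}(X,\Theta)$ explicitly, and then identify them with the target. Afterwards we check that the $\mathbb{S}\mathrm{p}^+(H')$-equivariant structure of Theorem \ref{thm:expactionSp+} is compatible with this identification, so that the descent to $\mathrm{Spec}\,(\mathbb{C})$ yields the claimed canonical isomorphism.

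First, fix $a=(Z,F,L)$ in $\mathrm{Sp}(X,\Theta)$. By Example \ref{ex:pi0Gamma}, since $\pi^0_\Gamma$ is the commutative algebra $\mathrm{Sym}(\mathfrak{h}_\Gamma\otimes Z)$, and $\mathfrak{h}_\Gamma\otimes F$ acts on it by multiplication (the positive modes act by zero at level $0$), the coinvariants are $\mathrm{Sym}(\mathfrak{h}_\Gamma\otimes Z/F)$. Composing with \eqref{eq:Z/F}, $Z/F\cong H^0(X,\Omega^1_X)^*$, gives an isomorphism of fibers
\[
\widehat{\mathbb{V}}(a,\pi^0_\Gamma)\cong \mathrm{Sym}\left(\mathfrak{h}_\Gamma\otimes_{\mathbb{C}} H^0(X,\Omega^1_X)^*\right).
\]
Varying $a$, the quotients $\mathbb{F}^\perp/\mathbb{F}\supset \mathbb{Z}/\mathbb{F}$ form a rank $g$ bundle on $\mathrm{Sp}(X,\Theta)$. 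The isomorphism \eqref{eq:Z/F}, built from $L$ and the identification of $X$ as $(Z/F)/L$, trivializes this bundle canonically by the constant space $H^0(X,\Omega^1_X)^*$, since $(X,\Theta)$ is fixed. Hence we obtain a trivialization
\[
\widehat{\mathbb{V}}(X,\Theta,\pi^0_\Gamma)\cong \mathrm{Sym}\left(\mathfrak{h}_\Gamma\otimes H^0(X,\Omega^1_X)^*\right)\otimes\mathscr{O}_{\mathrm{Sp}(X,\Theta)}.
\]

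The key step is to show that, under this trivialization, the $\mathfrak{sp}^+(H')$-action of Theorem \ref{thm:houtispreserved} becomes the trivial connection, i.e.\ acts only through derivatives of functions. Then the equivariant structure is the trivial one, and the descent $\mathbb{V}(X,\Theta,\pi^0_\Gamma)$ is the space of flat sections, namely $\mathrm{Sym}(\mathfrak{h}_\Gamma\otimes H^0(X,\Omega^1_X)^*)$. To verify this, I would use the decomposition
\[
\mathfrak{sp}^+(H')=\widetilde{S}^2(H'_+)\times(Z/F\,\widehat{\otimes}\,H'_+)\times\mathfrak{sp}^+_F(H')
\]
from the proof of Theorem \ref{thm:expactionSp+}. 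The last factor acts trivially on fibers by Theorem \ref{thm:houtispreserved}(ii). For the first two factors, take an element $b_ib_j$ with, say, $j>0$. On $\pi^0_\Gamma$, the operator $\sum_k H^k_iH^k_j$ acts as a derivation of the commutative algebra, because the level-$0$ bracket makes $H^k_j$ with $j>0$ act by the derivation contracting via $\langle\,,\,\rangle$. This derivation is induced by the linear map $X\in\mathfrak{sp}^+(H')$ acting on $\mathfrak{h}_\Gamma\otimes Z\cong\mathfrak{h}_\Gamma\otimes H'/H'_+$. Simultaneously, $X$ moves the point $a$ to $\exp(X)a=(\exp(X)Z,\exp(X)F,\exp(X)L)$. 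The isomorphism \eqref{eq:Z/F} is transported by $\exp(X)$, since the induced map $Z/F\to \exp(X)Z/\exp(X)F$ carries $L$ to $\exp(X)L$, and the underlying PPAV is preserved. Thus the two contributions cancel: the derivation on $V$ is exactly the change of the identification $Z/F\cong H^0(X,\Omega^1_X)^*$.

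The main obstacle is this cancellation, specifically the bookkeeping of signs and of the identification $\pi^0_\Gamma\cong\mathrm{Sym}(\mathfrak{h}_\Gamma\otimes Z)$ when $Z$ itself varies. The identification uses $H_-\to H'/H'_+\cong Z$, and I would check it on the generators $b_ib_j$ with $i<0<j$, and with $i,j>0$, where the action is zero on $V$ and also fixes $Z$ modulo $H'_+$. Once this is done, canonicity follows because no choice of $a$ enters the final identification. Dualizing $\mathrm{Sym}(W^*)=\mathrm{Fun}(W)$ for $W=\mathfrak{h}_\Gamma^*\otimes H^0(X,\Omega^1_X)\cong\mathfrak{h}_\Gamma\otimes H^0(X,\Omega^1_X)$, using the form on $\mathfrak{h}_\Gamma$, gives the equivalent statement.
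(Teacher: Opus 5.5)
Your first paragraph (Example \ref{ex:pi0Gamma} at a single point $a$, composed with \eqref{eq:Z/F}) is the paper's entire proof. The paper then simply observes that the target does not depend on $a$. The flatness argument you add as the key step, however, fails as written in two places.

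First, the derivation does not come from a level-$0$ bracket. In $\pi^0_\Gamma$ the Heisenberg modes commute, and those with $j\geq 0$ kill $\bm{1}$. So they act by zero, and so does $\sum_k H^k_iH^k_j$ whenever $j>0$. Read literally, Lemma \ref{lemma:mpaction} therefore gives the \emph{zero} action of $\mathfrak{sp}^+(H')$ on $\pi^0_\Gamma$. With that action nothing compensates the motion of $\mathrm{pr}_{H_-}(F)$ in the $\mathcal{B}_g(H'_+)$-directions; for example, $b_ib_j$ with $i<0<j$ acts on $H_-$ by $b_{-j}\mapsto j\,b_i$. Consequently the subsheaf $\mathfrak{h}_\Gamma^{\mathrm{out}}(X,\Theta)(V\otimes\mathscr{O})$, whose fibers are the ideals generated by $\mathfrak{h}_\Gamma\otimes\mathrm{pr}_{H_-}(F)$, is in general not even preserved. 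The action you actually need is the one by derivations induced from the linear action of $\mathfrak{sp}^+(H')$ on $H'/H'_+$. This is the $\ell\to 0$ limit of $\ell^{-1}$ times the level-$\ell$ action. You should adopt it explicitly as the action on $\pi^0_\Gamma$ rather than derive it from a bracket. With it, your local cancellation is correct.

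Second, the bundle with fibers $Z/F$ is not globally trivialized by the constant space $H^0(X,\Omega^1_X)^*$. The quotient $(Z/F)/L$ is identified with the fixed $X$ only up to $\mathrm{Aut}(X,\Theta)$, and this shows up as monodromy on $\mathrm{Sp}(X,\Theta)$.
\begin{itemize}
\item Let $\rho_\theta$ act by $e^{i\theta}$ on $H'_+$ and by $e^{-i\theta}$ on $H_-$. This is a path in $\mathbb{S}\mathrm{p}^+(H')$ from $\mathrm{id}$ to $-\mathrm{id}$.
\item Since $-\mathrm{id}$ fixes every triple $(Z,F,L)$, the curve $\theta\mapsto\rho_\theta a$, $0\leq\theta\leq\pi$, is a loop at $a$.
\item Its lift to $\widehat{\mathcal{H}}_g$ runs from $(\varphi,\bm{h},\Omega)$ to $(\varphi,-\bm{h},\Omega)$, which is the image under $-I$ in \eqref{eq:Sp2gZaction}.
\item Transporting $L$ along this loop gives monodromy $-1$ on $Z/F$.
\item The element $-\mathrm{id}$ stabilizes $a$ but is not in $\mathbb{S}\mathrm{p}^+_F(H')$. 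It acts on $\widehat{\mathbb{V}}(a,\pi^0_\Gamma)=\mathrm{Sym}(\mathfrak{h}_\Gamma\otimes Z/F)$ by $(-1)^{\deg}$.
\end{itemize}
So ``flat sections over $\mathrm{Sp}(X,\Theta)$'' would only produce the $\mathrm{Aut}(X,\Theta)$-invariants (the even part, for generic $X$), not the claimed space.

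The repair is to run your computation on the component $\widetilde{S}^2(H'_+)\times\mathcal{B}_g(H'_+)\times\{\Omega_0\}$ of $\widetilde{\mathrm{Sp}}(X,\Theta)$. The $\mathbb{S}\mathrm{p}^+(H')$-action lifts to this component by transporting the symplectic basis of $L$, and there the identification with $X$ is fixed. On it the equivariant structure is trivial in your trivialization, and you recover the fiber at one point, which is the paper's conclusion. The resulting isomorphism is canonical only up to the natural action of $\mathrm{Aut}(X,\Theta)$ on the target. The paper's ``independent of $a$'' also passes over this subtlety.
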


\begin{proof}
For a choice of an extended PPAV $a=(Z,F,L)$ with underlying PPAV $(X,\Theta)$, Example \ref{ex:pi0Gamma} yields
\begin{equation*}
\widehat{\mathbb{V}}\left(a,\pi_\Gamma^0\right) \cong \mathrm{Sym}\left( \mathfrak{h}_\Gamma\otimes_{\mathbb{C}}Z/F \right).
\end{equation*}
Composing with $Z\, / \, F \cong H^0\left(X, \Omega^1_X \right)^*$ from \eqref{eq:Z/F}, we have
\[
\widehat{\mathbb{V}}\left(a,\pi_\Gamma^0\right) \cong \mathrm{Sym}\left( \mathfrak{h}_\Gamma\otimes_{\mathbb{C}} H^0(X, \Omega^1_X)^*\right).
\]
As the right-hand side is independent of the choice of $a$ in $\mathrm{Sp}(X,\Theta)$, the statement follows.
\end{proof}

\begin{proof}[Proof of Theorem \ref{thm:Vpi0}]
This is the case $\mathrm{rank}(\Gamma) = 1$ of Theorem \ref{thm:Vpi0Gamma}.
\end{proof}

\section{Symplectic and metaplectic group (ind-)schemes}
\label{sec:symmetagpsch}

In \S\ref{sec:sheavesaghatag}, we will globalize the construction of spaces of coinvariants over the moduli space $\widehat{\mathcal{A}}_g$. 
This requires upgrading the symplectic Lie groups from \S\ref{sec:spgp} to group schemes. Also, it requires a global analogue of Theorem \ref{thm:Sp+action}, namely a realization of $\widehat{\mathcal{A}}_g$ infinitesimally as a homogeneous space for the action of a metaplectic group ind-scheme. Here we review the required group (ind)-schemes from \cite{avva}. We refer to 
\cite[\S A.1.4]{bzf} for group ind-schemes in general.

\subsection{Symplectic group (ind-)schemes}
\label{sec:gpschemes}
Recall the group $\mathbb{S}\mathrm{p}^+\left(H' \right)$ from \S\ref{sec:spgp}.
For the rest of the paper, we upgrade $\mathbb{S}\mathrm{p}^+\left(H' \right)$ to a group scheme, still denoted $\mathbb{S}\mathrm{p}^+\left(H' \right)$ for simplicity. This is the scheme representing the group functor which assigns to a $\mathbb{C}$-algebra $R$ the identity component of the group of continuous $R$-linear symplectic automorphisms of $H'\,\widehat{\otimes}_\mathbb{C}\, R$ preserving the subspace $H'_+ \,\widehat{\otimes}_{\mathbb{C}}\, R$.

Similarly, we upgrade the group $\mathbb{S}\mathrm{p}^+_F\left(H' \right)$ from \S\ref{sec:spgp} to a group scheme.

From Theorem \ref{thm:Sp+action}, $\mathbb{S}\mathrm{p}^+\left(H' \right)$ acts transitively on the fibers of the map $\widehat{\mathcal{A}}_g\rightarrow \mathcal{A}_g$; the stabilizer of a point $(Z,F,L)$ in $\widehat{\mathcal{A}}_g$ is $\mathbb{S}\mathrm{p}^+_F\left(H' \right)$.

More generally, consider the group functor which assigns to a $\mathbb{C}$-algebra $R$ the connected group of
continuous $R$-linear automorphisms of $H' \,\widehat{\otimes}_\mathbb{C}\, R$ of the following type
\[
\left\{\rho\in \mathrm{GL}\left(H' \,\widehat{\otimes}_\mathbb{C}\, R\right) \, \Bigg| \,
\begin{array}{l}
\langle \rho(a), \rho(b) \rangle = \langle a, b \rangle \quad \mbox{for all $a,b\in H'\,\widehat{\otimes}_\mathbb{C}\, R$,}\\[5pt]
\rho\left( H'_+ \,\widehat{\otimes}_\mathbb{C}\,R\right) \subseteq H'_+ \,\widehat{\otimes}_\mathbb{C}\,R \oplus H_-\,\widehat{\otimes}_\mathbb{C}\,R_{\mathrm{nil}}
\end{array}
\right\}^\circ
\]
where $R_{\mathrm{nil}}\subseteq R$ is the $\mathbb{C}$-subalgebra consisting of nilpotent elements.
Let $\mathbb{S}\mathrm{p}\left(H'\right)$ be the group ind-scheme representing this functor. It is easy to see that  $\mathbb{S}\mathrm{p}^+\left(H'\right)$ is a subgroup scheme of $\mathbb{S}\mathrm{p}\left(H'\right)$. Also, $\mathbb{S}\mathrm{p}\left(H'\right)$ is the group ind-scheme that arises by formally exponentiating elements of $\mathfrak{sp}\left(H' \right)$. In particular, one has
\[
\mathrm{Lie}\left(\mathbb{S}\mathrm{p}\left(H'\right)\right)=\mathfrak{sp}\left(H' \right).
\]

\subsection{Central extensions}
\label{sec:centralext}
We consider two central extensions. Let $\mathbb{M}\mathrm{p}\left(H'\right)$ be the group ind-scheme given by the central extension
\[
1 \rightarrow \mathbb{G}_m \rightarrow \mathbb{M}\mathrm{p}\left(H'\right) \rightarrow \mathbb{S}\mathrm{p}\left(H'\right)  \rightarrow 1
\]
corresponding to the central extension $\mathfrak{mp}\left( H' \right)$ of  $\mathfrak{sp}\left(H' \right)$ from \eqref{eq:mp}.
Here $\mathbb{G}_m$ is the multiplicative group scheme.

Also, let $\mathbb{K}^\times$ be the group ind-scheme of invertible Laurent series \cite[\S 20.3.4]{bzf}. This is the group ind-scheme obtained by exponentiating $H$, thus \mbox{$\mathrm{Lie}\left(\mathbb{K}^\times\right)=H$.}
Consider the normal Lie subgroup $\mathbb{G}_m\hookrightarrow \mathbb{K}^\times$ consisting of invertible constants, and let $\mathbb{K}_1^\times:=\mathbb{K}^\times/\mathbb{G}_m$ be the quotient group ind-scheme. One has 
\[
\mathrm{Lie}\left(\mathbb{K}_1^\times\right)=H'.
\]
Define $\mathbb{H}_2$ to be the group ind-scheme given by the central extension
\[
1 \rightarrow \mathbb{G}_m \rightarrow \mathbb{H}_2 \rightarrow \mathbb{S}\mathrm{p}\left(H'\right)\ltimes\mathbb{K}_1^\times  \rightarrow 1
\]
corresponding to the central extension $\mathscr{H}_2$ of  $\mathfrak{sp}\left(H' \right)\ltimes H'$ from \eqref{eq:H2}.

The transitive action of the Lie algebra $\mathfrak{mp}\left( H' \right)$ on $\widehat{\mathcal{A}}_g$
exponentiates to an action of the group ind-scheme $\mathbb{M}\mathrm{p}\left(H'\right)$ on $\widehat{\mathcal{A}}_g$. In particular,
 $\widehat{\mathcal{A}}_g$ is infinitesimally a homogeneous space for the action of $\mathbb{M}\mathrm{p}\left(H'\right)$.

Similarly, the transitive action of $\mathscr{H}_2$ on $\widehat{\mathcal{X}}_g$
exponentiates to an action of $\mathbb{H}_2$ on $\widehat{\mathcal{X}}_g$, thus
 $\widehat{\mathcal{X}}_g$ is infinitesimally a homogeneous space for $\mathbb{H}_2$.

\subsection{Invariance of isotropy sheaves}
The actions of $\mathfrak{mp}\left(H'\right)$ on itself and on $\widehat{\mathcal{A}}_g$ induce an action of $\mathfrak{mp}\left(H'\right) \,\widehat{\otimes}_\mathbb{C}\, \mathscr{O}_{\widehat{\mathcal{A}}_g}$ on itself:
\[
\left(S\otimes h\right)\left( T\otimes f\right) = [S,T]\otimes hf + T\otimes \left(S\otimes h\right)(f)
\]
for local sections $S\otimes h, T\otimes f\in\mathfrak{mp}\left(H'\right) \,\widehat{\otimes}_\mathbb{C}\, \mathscr{O}_{\widehat{\mathcal{A}}_g}$. Similarly, $\mathscr{H}_2\,\widehat{\otimes}_\mathbb{C}\, \mathscr{O}_{\widehat{\mathcal{X}}_g}$ acts on itself.
Recall the sheaves of isotropy Lie subalgebras $\mathfrak{sp}^{\mathrm{out}} \left( H'\right)$ and $\mathfrak{sp}^{\mathrm{out}}\left(H' \right) \ltimes \mathbb{F}$ from Figures \ref{fig:bigAtiyahLambdadiag} and \ref{fig:bigAtiyahXidiag}. 
The following statement extends Corollary \ref{cor:sp+outpreserved}:

\begin{corollary}
\label{cor:spoutpreserved}
\begin{enumerate}[(i)]
\item The subsheaf 
\[
\mathfrak{sp}^{\mathrm{out}}\left(H' \right) \subset
\mathfrak{mp}\left( H'\right) \,\widehat{\otimes}_\mathbb{C}\, \mathscr{O}_{\widehat{\mathcal{A}}_g}
\]
is preserved by the action of $\mathfrak{mp}\left(H'\right)\widehat{\otimes}_\mathbb{C}\, \mathscr{O}_{\widehat{\mathcal{A}}_g}$ on itself. 

\item The subsheaf 
\[
\mathfrak{sp}^{\mathrm{out}}\left(H' \right) \ltimes \mathbb{F}\subset
\mathscr{H}_2 \,\widehat{\otimes}_\mathbb{C}\, \mathscr{O}_{\widehat{\mathcal{X}}_g}
\]
is preserved by the action of $\mathscr{H}_2\,\widehat{\otimes}_\mathbb{C}\, \mathscr{O}_{\widehat{\mathcal{X}}_g}$ on itself.
\end{enumerate}
\end{corollary}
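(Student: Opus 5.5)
The plan is to argue as in Corollary \ref{cor:sp+outpreserved}: a sheaf of isotropy algebras of a transitive Lie algebroid action is equivariant for the acting algebra. I would then carry the statement from $\mathfrak{sp}$ (resp. $\mathfrak{sp}\ltimes H'$) to the central extensions $\mathfrak{mp}(H')$ (resp. $\mathscr{H}_2$) using the splittings through the normal ordering.

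For (i), the first step is to use Theorem \ref{thm:spspHunif}(i). There $\mathfrak{sp}(H')\,\widehat{\otimes}_\mathbb{C}\,\mathscr{O}_{\widehat{\mathcal{A}}_g}$ is an action Lie algebroid with surjective anchor onto $\mathscr{T}_{\widehat{\mathcal{A}}_g}$, and its kernel is $\mathfrak{sp}^{\mathrm{out}}(H')$, with fiber $\mathfrak{sp}_F(H')$ at $(Z,F,L)$. The bracket on an action algebroid is
\[
[S\otimes h, T\otimes f] = [S,T]\otimes hf + T\otimes (S\otimes h)(f) - S\otimes (T\otimes f)(h).
\]
If $T\otimes f$ lies in the kernel of the anchor, the last term vanishes. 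So the displayed action of $S\otimes h$ on $T\otimes f$ coincides with the algebroid bracket. The kernel of the anchor is an ideal in the algebroid, since the anchor is a Lie algebra homomorphism to vector fields. Hence $\mathfrak{sp}^{\mathrm{out}}(H')$ is preserved. Equivalently, infinitesimally: a symplectic endomorphism $S$ moving the point along $F\mapsto F+\epsilon S(F)$ transforms the condition $X(F^\perp)\subseteq F$ into the same condition for the deformed $F$, which is exactly the $[S,\cdot]$ plus derivative term.

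Next I pass to $\mathfrak{mp}(H')$. The central element acts trivially on $\widehat{\mathcal{A}}_g$ and brackets to zero, so the action of $\mathfrak{mp}(H')\,\widehat{\otimes}_\mathbb{C}\,\mathscr{O}_{\widehat{\mathcal{A}}_g}$ factors through $\mathfrak{sp}$ up to a central term. What remains is to control the cocycle. The subsheaf is embedded via $fh\mapsto :\!fh\!:$ (\cite[Prop.~5.3]{avva}), and the bracket of $S$ with $:\!fh\!:$ in $\mathfrak{mp}$ equals $:\![S,fh]\!:$ plus a scalar $c(S,fh)$. I must show that, after adding the derivative term $:\!\partial_S(fh)\!:$, the resulting section lies in the image of the splitting. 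My approach is to note that the image of the splitting is exactly the kernel of the Atiyah-algebra anchor $\mathfrak{mp}(H')\,\widehat{\otimes}_\mathbb{C}\,\mathscr{O}_{\widehat{\mathcal{A}}_g}\to\mathscr{F}_\Lambda$, by Figure \ref{fig:bigAtiyahLambdadiag} and exactness. This kernel is again an ideal, because that map is a morphism of Lie algebroids. This gives (i) without computing the cocycle.

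For (ii), I use the same argument with Theorem \ref{thm:spspHunif}(ii) and Figure \ref{fig:bigAtiyahXidiag}. There $\mathfrak{sp}^{\mathrm{out}}(H')\ltimes\mathbb{F}$ is the kernel of $\mathscr{H}_2\,\widehat{\otimes}_\mathbb{C}\,\mathscr{O}_{\widehat{\mathcal{X}}_g}\to\mathscr{F}_\Xi$, which is a morphism of Lie algebroids. It remains to check that the self-action displayed above equals the algebroid bracket when restricted to the kernel. Both facts hold as before, since the anchor kills the kernel.

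The main obstacle is justifying that the maps to $\mathscr{F}_\Lambda$ and $\mathscr{F}_\Xi$ are genuine Lie algebroid morphisms and that their kernels are precisely the split images. If I cannot cite this from \cite{avva}, I would fall back on a direct check. I would expand the central cocycle $c(S,:\!fh\!:)$ with $f\in F$, and show that it cancels against the normal-ordering correction coming from varying $F$. This computation reduces to the isotropy of $F$, via $\langle f,f'\rangle=0$.
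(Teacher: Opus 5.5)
For $g\geq 3$ your argument is correct. It rests on the same principle as the paper's one-line proof: the isotropy sheaf of a transitive action is equivariant for the structure group, which the paper takes to be $\mathbb{M}\mathrm{p}(H')$ and $\mathbb{H}_2$. You run that principle infinitesimally instead. On the kernel of the anchor, the self-action agrees with the action-algebroid bracket, and the kernel of a Lie algebroid morphism is an ideal. This avoids exponentiating to group ind-schemes.

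More substantively, your detour through $\mathscr{F}_\Lambda$ and $\mathscr{F}_\Xi$ is what actually controls the central term. Since $\bm{1}$ acts on $\widehat{\mathcal{A}}_g$ by the zero vector field, the isotropy of $\mathbb{M}\mathrm{p}(H')$ acting on the base is the whole preimage of $\mathfrak{sp}_F(H')$. Equivariance on the base therefore only shows that $\mathfrak{sp}^{\mathrm{out}}(H')\oplus\mathscr{O}_{\widehat{\mathcal{A}}_g}\bm{1}$ is preserved. To isolate the split copy you need an action on which the center acts nontrivially, and that is exactly what the exact sequences in Figures \ref{fig:bigAtiyahLambdadiag} and \ref{fig:bigAtiyahXidiag} supply.

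The price is that these sequences come from the actions of $\mathfrak{mp}(H')$ on $\Lambda$ and of $\mathscr{H}_2$ on $\Xi$. The paper records these only for $g\geq 3$, whereas the corollary has no such hypothesis and is used for every $g$ in Theorem \ref{thm:houtispreservedmp} and in the descent. So for small $g$ you depend on your fallback, and as written it would fail. With a fixed normal ordering, the derivative term $:\!\partial_S(fh)\!:$ has no central part. Yet at a point with $Z=H_-$ and $b_{-p}\in F$, for distinct $p,q>0$,
\[
[b_pb_q,\,b_{-p}b_{-q}]=q\,b_{-p}b_p+p\,b_{-q}b_q+pq ,
\]
and nothing cancels the central term $pq$.

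The split copy has to be taken as $f\otimes h\mapsto f\cdot h$: the Weyl product with $f\in\mathbb{F}$ on the left, i.e.\ normal ordering relative to the moving polarization $Z\oplus H'_+$. This map is well defined on $\mathfrak{sp}^{\mathrm{out}}(H')$ because $f\cdot f'-f'\cdot f=\langle f,f'\rangle=0$; this is where isotropy of $F$ genuinely enters. Then
\[
(S\otimes 1)(f\cdot h)=\bigl([S,f]+\partial_S f\bigr)\cdot h+f\cdot[S,h].
\]
Here $[S,f]+\partial_S f$ is a section of $\mathbb{F}$, by the same equivariance that gives your $\mathfrak{sp}$-level statement. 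Also $[S,h]\in H'$ has no central part. Hence (i) holds for all $g$ without reference to $\Lambda$.

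For (ii), a direct check must use the actual isotropy of $\mathfrak{sp}(H')\ltimes H'$, which varies along the fibers of $\widehat{\mathcal{X}}_g\to\widehat{\mathcal{A}}_g$. An element $S\in H'$ moves only along fibers, so $\partial_S f=0$ for $f$ pulled back from the base. Yet it sends $f\cdot h$ to $\langle S,f\rangle h+\langle S,h\rangle f$, which is not in $\mathbb{F}$ unless $S\in F^\perp$. So the kernel you cite from Figure \ref{fig:bigAtiyahXidiag} must be a fiberwise conjugate of $\mathfrak{sp}^{\mathrm{out}}(H')\ltimes\mathbb{F}$, not its naive pullback from $\widehat{\mathcal{A}}_g$.
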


\begin{proof}
As in the proof of Corollary \ref{cor:sp+outpreserved}, the sheaf of isotropy algebras is equivariant with respect to the action of the structure group, $\mathbb{M}\mathrm{p}\left(H' \right)$  for part (i) and $\mathbb{H}_2$ for part (ii). Hence the statement.
\end{proof}

\section{Sheaves of coinvariants on $\widehat{\mathcal{A}}_g$ and ${\mathcal{A}}_g$}
\label{sec:sheavesaghatag}

Here we show that spaces of coinvariants globalize to a quasi-coherent sheaf $\widehat{\mathbb{V}}(V)$ on $\widehat{\mathcal{A}}_g$ carrying a twisted $\mathscr{D}$-module structure (Corollary \ref{cor:twistedDmodonVhat}). We then show that $\widehat{\mathbb{V}}(V)$ descends to a quasi-coherent sheaf ${\mathbb{V}}(V)$ on ${\mathcal{A}}_g$ (Corollary \ref{cor:VonAg}) also carrying a twisted $\mathscr{D}$-module structure (Corollary \ref{cor:twistedDmodonV}).
We conclude with the proof of Theorem \ref{thm:mainintroAg}.

\subsection{Sheaves of coinvariants on $\widehat{\mathcal{A}}_g$}
\label{sec:VhatonAg}
For a Heisenberg vertex algebra $V$, the spaces of coinvariants associated to extended PPAVs from \S\ref{sec:VhataV} globalize to a quasi-coherent sheaf on $\widehat{\mathcal{A}}_g$, which we denote as 
\[
\widehat{\mathbb{V}}(V)\rightarrow\widehat{\mathcal{A}}_g.
\]
Specifically, let $\mathbb{F}\rightarrow\widehat{\mathcal{A}}_g$ be the tautological sheaf from \eqref{eq:universalF} whose fiber at $(Z,F,L)$ is $F$.
Define
\[
\mathfrak{h}_\Gamma^{\mathrm{out}}:= \mathfrak{h}_\Gamma \otimes_{\mathbb{C}} \mathbb{F}.
\]
This is a subsheaf of the sheaf of Lie algebras $\mathscr{H}_\Gamma \,\widehat{\otimes}_\mathbb{C}\, \mathscr{O}_{\widehat{\mathcal{A}}_g}$.
The action of $\mathscr{H}_\Gamma$ on $V$ extends $\mathscr{O}_{\widehat{\mathcal{A}}_g}$-linearly to an action of $\mathscr{H}_\Gamma \,\widehat{\otimes}_\mathbb{C}\, \mathscr{O}_{\widehat{\mathcal{A}}_g}$ on $V\otimes_\mathbb{C} \mathscr{O}_{\widehat{\mathcal{A}}_g}$. This induces an action of $\mathfrak{h}_\Gamma^{\mathrm{out}}$ on $V\otimes_\mathbb{C} \mathscr{O}_{\widehat{\mathcal{A}}_g}$. Then the sheaf $\widehat{\mathbb{V}}(V)$ is defined as
\[
\widehat{\mathbb{V}}(V) := V\otimes_\mathbb{C} \mathscr{O}_{\widehat{\mathcal{A}}_g}
\, / \,
\mathfrak{h}_\Gamma^{\mathrm{out}}\left( V\otimes_\mathbb{C} \mathscr{O}_{\widehat{\mathcal{A}}_g}\right).
\]
The restriction of $\widehat{\mathbb{V}}(V)$ to the fiber of the map $\widehat{\mathcal{A}}_g\rightarrow \mathcal{A}_g$ over a point $(X, \Theta)$ in $\mathcal{A}_g$ recovers the quasi-coherent sheaf $\widehat{\mathbb{V}}(X,\Theta,V)\rightarrow\mathrm{Sp}(X,\Theta)$ from \S\ref{sec:VhatXthetaV}. In particular, one has a Cartesian diagram 
\[
\begin{tikzcd}
\widehat{\mathbb{V}}(V) \arrow{d} \arrow[hookleftarrow]{r}& \widehat{\mathbb{V}}(X,\Theta,V) \arrow{d} \arrow[hookleftarrow]{r} & \widehat{\mathbb{V}}(a, V) \arrow{d}\\
\widehat{\mathcal{A}}_g \arrow{d} \arrow[hookleftarrow]{r}& \mathrm{Sp}(X,\Theta) \arrow{d} \arrow[hookleftarrow]{r}{a} &  \mathrm{Spec}\,(\mathbb{C})\\
 \mathcal{A}_g \arrow[hookleftarrow]{r}{(X,\Theta)} & \mathrm{Spec}\,(\mathbb{C}).
\end{tikzcd}
\]

\subsection{Metaplectic algebra action}
The metaplectic algebra $\mathfrak{mp}\left(H'\right)$ acts on $V$ as in \S\ref{sec:mpaction} and acts transitively on $\widehat{\mathcal{A}}_g$ from Theorem \ref{thm:spspHunif}(i).
These two actions induce an action of $\mathfrak{mp}\left(H'\right) \,\widehat{\otimes}_\mathbb{C}\, \mathscr{O}_{\widehat{\mathcal{A}}_g}$ on $V\otimes_\mathbb{C} \mathscr{O}_{\widehat{\mathcal{A}}_g}$:
\begin{equation}
\label{eq:mpOonVO}
\left(S\otimes h\right)\left( A\otimes f\right) = S(A)\otimes hf + A\otimes \left(S\otimes h\right)(f)
\end{equation}
for local section $S\otimes h\in \mathfrak{mp}\left(H'\right)\,\widehat{\otimes}_\mathbb{C}\, \mathscr{O}_{\widehat{\mathcal{A}}_g}$ and $A\otimes f\in V\otimes_\mathbb{C} \mathscr{O}_{\widehat{\mathcal{A}}_g}$.
Recall the sheaf of isotropy Lie subalgebras $\mathfrak{sp}^{\mathrm{out}} \left( H'\right)$ from Figure \ref{fig:bigAtiyahLambdadiag}. 

\begin{theorem}
\label{thm:houtispreservedmp}
\begin{enumerate}[(i)]
\item The subsheaf 
\[
\mathfrak{h}_\Gamma^{\mathrm{out}}\left( V\otimes_\mathbb{C} \mathscr{O}_{\widehat{\mathcal{A}}_g}\right) \subset
V\otimes_\mathbb{C} \mathscr{O}_{\widehat{\mathcal{A}}_g}
\]
is preserved by the action of $\mathfrak{mp}\left(H'\right) \,\widehat{\otimes}_\mathbb{C}\, \mathscr{O}_{\widehat{\mathcal{A}}_g}$ on $V\otimes_\mathbb{C} \mathscr{O}_{\widehat{\mathcal{A}}_g}$.
Hence one has an induced action of $\mathfrak{mp}\left(H'\right) \,\widehat{\otimes}_\mathbb{C}\, \mathscr{O}_{\widehat{\mathcal{A}}_g}$ on the quotient $\widehat{\mathbb{V}}(V)$.

\smallskip

\item The action of $\mathfrak{mp}\left(H'\right) \,\widehat{\otimes}_\mathbb{C}\, \mathscr{O}_{\widehat{\mathcal{A}}_g}$ on $\widehat{\mathbb{V}}(V)$ induces a trivial action of $\mathfrak{sp}^{\mathrm{out}}\left(H' \right)$ on $\widehat{\mathbb{V}}(V)$.
\end{enumerate}
\end{theorem}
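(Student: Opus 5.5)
The plan is to mirror the proof of Theorem~\ref{thm:houtispreserved}, replacing $\mathfrak{sp}^+(H')$ and $\mathrm{Sp}(X,\Theta)$ by $\mathfrak{mp}(H')\,\widehat{\otimes}_\mathbb{C}\,\mathscr{O}_{\widehat{\mathcal{A}}_g}$ and $\widehat{\mathcal{A}}_g$, and Corollary~\ref{cor:sp+outpreserved} by Corollary~\ref{cor:spoutpreserved}(i).

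First I introduce the global analogue of \eqref{eq:sp+outGamma}:
\[
\mathfrak{sp}_{\Gamma}^{\mathrm{out}} := \left(\mathfrak{h}_\Gamma \otimes_{\mathbb{C}} \mathbb{F}\right)\widehat{\otimes}_{\mathbb{C}}\left(\mathfrak{h}_\Gamma\,\widehat{\otimes}_\mathbb{C}\, H'\right).
\]
It embeds into $\mathscr{H}_\Gamma\,\widehat{\otimes}_\mathbb{C}\,\mathscr{O}_{\widehat{\mathcal{A}}_g}$ via normal ordering, as in \eqref{eq:sp+outGammaintoHO}. This is well defined because $F$ is isotropic, so the normal ordering of $(A\otimes f)(B\otimes h)$ with $f\in F$ only reorders factors that already commute, up to terms killed on $V$. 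Then I prove the analogue of \eqref{eq:houtissp+out}:
\[
\mathfrak{h}_\Gamma^{\mathrm{out}}\left(V\otimes\mathscr{O}_{\widehat{\mathcal{A}}_g}\right)=\mathfrak{sp}_{\Gamma}^{\mathrm{out}}\left(V\otimes\mathscr{O}_{\widehat{\mathcal{A}}_g}\right).
\]
For ``$\supseteq$'', write $:\!(A\otimes f)(B\otimes h)\!:$ with the $f$-factor acting last, or commute it to the left. The commutator is a scalar times $\langle f,h\rangle$, so I need to handle it. Taking $h\in H'_+$ and $f$ placed to the left gives the form in \eqref{eq:sp+outGammaintoHO}. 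For a general $h$, the normal ordering puts annihilators right, and commutation produces a central term; I expect to absorb it because such terms arise only from $\langle f,h\rangle$. The splitting $fh\mapsto :\!fh\!:$ of \cite[Prop.~5.3]{avva} shows these are exactly the elements acting through $\mathfrak{h}^{\mathrm{out}}$. For ``$\subseteq$'', use $(\mathfrak{h}_\Gamma\,\widehat{\otimes}\,H'_+)(V)=V$ as before.

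Next, I show that $\mathfrak{sp}_\Gamma^{\mathrm{out}}(V\otimes\mathscr{O})$ is stable under the action \eqref{eq:mpOonVO}. For a local section $S\otimes h$ of $\mathfrak{mp}(H')\,\widehat{\otimes}\,\mathscr{O}$ and $T$ a local section of $\mathfrak{sp}_\Gamma^{\mathrm{out}}$, the Leibniz-type formula gives
\[
(S\otimes h)(T\cdot v)=\left((S\otimes h)T\right)\cdot v+T\cdot\left((S\otimes h)v\right),
\]
where $(S\otimes h)T$ is the adjoint-plus-derivation action on sections of $\mathscr{H}_\Gamma\,\widehat{\otimes}\,\mathscr{O}$. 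The second term lies in the subsheaf trivially. For the first term, in rank one, Corollary~\ref{cor:spoutpreserved}(i) says that $\mathfrak{sp}^{\mathrm{out}}(H')$ is preserved. For higher rank, I pass to the rank-$d$ analogue $\mathfrak{sp}_\Gamma(H')\supset\mathfrak{mp}(H')$, which is embedded diagonally via Lemma~\ref{lemma:mpaction}(ii), and apply the corollary by linearity in $\mathfrak{h}_\Gamma$, exactly as in Theorem~\ref{thm:houtispreserved}.

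\textbf{Main obstacle.} Central terms may appear from the cocycle. The bracket in $\mathfrak{mp}$ of $S$ with $:\!fh\!:$ may differ from $:\![S,fh]\!:$ by a multiple of $\bm 1$, and such a scalar does not act by zero on the quotient. I would handle this using the splitting of \cite[Prop.~5.3]{avva}. That splitting is a Lie algebra map, so it identifies $\mathfrak{sp}^{\mathrm{out}}$ with a subsheaf closed under the action of $\mathfrak{mp}\,\widehat{\otimes}\,\mathscr{O}$. This closure is what Corollary~\ref{cor:spoutpreserved}(i) asserts inside $\mathfrak{mp}\,\widehat{\otimes}\,\mathscr{O}$, so no stray central term arises.

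For part (ii), take a local section of $\mathfrak{sp}^{\mathrm{out}}(H')=\mathbb{F}\,\widehat{\otimes}\,H'$, embedded by $fh\mapsto:\!fh\!:$. Its image in the anchor is zero, so it acts $\mathscr{O}$-linearly, purely through $V$. Its action $\sum_i :\!H^i\otimes f\,H^i\otimes h\!:$ on $V\otimes\mathscr{O}$ lies in $\mathfrak{sp}_\Gamma^{\mathrm{out}}(V\otimes\mathscr{O})$. By the equality established above, this is contained in $\mathfrak{h}_\Gamma^{\mathrm{out}}(V\otimes\mathscr{O})$, so the action on $\widehat{\mathbb{V}}(V)$ is trivial.
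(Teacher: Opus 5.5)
\textbf{Assessment.} Your outline follows the paper's proof step for step: the auxiliary sheaf $\mathfrak{sp}^{\mathrm{out}}_\Gamma$, the equality of images, the reduction to Corollary~\ref{cor:spoutpreserved}(i) in rank one plus linearity, and (ii) deduced from the equality. However, the step you try to justify beyond the paper does not hold. With the standard normal ordering and $h$ ranging over all of $H'$, the inclusion $\mathfrak{sp}^{\mathrm{out}}_\Gamma(V\otimes\mathscr{O})\subseteq\mathfrak{h}^{\mathrm{out}}_\Gamma(V\otimes\mathscr{O})$ is false.

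\textbf{The gap.} Write $f=f_-+f_+$ and $h=h_-+h_+$ in $H_-\oplus H'_+$. Normal ordering moves $A\otimes f_+$ past $B\otimes h_-$, so
\[
(A\otimes f)(B\otimes h)=\,:\!(A\otimes f)(B\otimes h)\!:\,+\,(A,B)\,\langle f_+,h_-\rangle .
\]
\begin{itemize}
\item Isotropy of $F$ only gives $\langle f,f'\rangle=0$ for $f,f'\in F$. It does not control this scalar, and the scalar is not $\langle f,h\rangle$.
\item The scalar is nonzero whenever $(A,B)\neq 0$, $f=f_-+\varphi(f_-)$ has $\varphi(f_-)\neq 0$, and $h_-$ pairs nontrivially with $f_+$. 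This is the generic situation, e.g.\ on the Torelli locus for a generic coordinate.
\item For $V=\pi_\Gamma$ one has $\bm 1\notin F\cdot V$, since the coinvariants are nonzero (cf.\ the Remark in \S\ref{sec:VhataV}). Hence $:\!(A\otimes f)(B\otimes h)\!:\bm 1\equiv-(A,B)\langle f_+,h_-\rangle\,\bm 1\not\equiv 0$ modulo $\mathfrak{h}^{\mathrm{out}}_\Gamma(a)(V)$.
\end{itemize}
So the central term cannot be absorbed. Appealing to \cite[Prop.~5.3]{avva} does not help: being a Lie algebra splitting constrains brackets, not whether the image acts through $F\cdot V$. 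The paper's proof passes over the same point with ``follows similarly''. The analogy with Theorem~\ref{thm:houtispreserved} breaks exactly here: there $h\in H'_+$, so $h_-=0$ and $:\!fh\!:=fh$ on the nose.

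\textbf{A repair.} Embed $\mathfrak{sp}^{\mathrm{out}}_\Gamma$ (and $\mathfrak{sp}^{\mathrm{out}}$) by the honest product with the $F$-factor acting last, $(A\otimes f)\otimes(B\otimes h)\mapsto(A\otimes f)(B\otimes h)$. This is normal ordering relative to $Z\oplus H'_+$ instead of $H_-\oplus H'_+$.
\begin{itemize}
\item It converges in $\mathscr{H}_\Gamma$, since $h$ has finite polar part, and it agrees with $:\;:$ when $h\in H'_+$.
\item It is a Lie algebra splitting over $\mathfrak{sp}_F(H')$, because $[fh,f'h']=\langle h,f'\rangle fh'+\langle h,h'\rangle ff'+\langle f,h'\rangle f'h$ uses only $\langle f,f'\rangle=0$.
\item It is built from $F$ and the Weyl algebra alone, hence equivariant, so the argument behind Corollary~\ref{cor:spoutpreserved}(i) genuinely applies to it. The standard lift differs from it by the scalar $\langle f_+,h_-\rangle$, which varies along $\widehat{\mathcal{A}}_g$.
\end{itemize}
With this lift, the inclusion into $\mathfrak{h}^{\mathrm{out}}_\Gamma(V\otimes\mathscr{O})$ is immediate and (ii) holds. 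With the standard lift, (ii) fails by the computation above. So this is the splitting that must be used in Figure~\ref{fig:bigAtiyahLambdadiag} when deducing Corollary~\ref{cor:twistedDmodonVhat}.

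\textbf{A shortcut for (i).} Part (i) does not need $\mathfrak{sp}^{\mathrm{out}}_\Gamma$ at all. Since $[\mathscr{H}_2,\mathscr{H}_1]\subseteq\mathscr{H}_1$ produces no central term, and the tautological sheaf $\mathbb{F}$ is $\mathfrak{sp}(H')$-equivariant, $\mathfrak{h}_\Gamma\otimes\mathbb{F}$ is stable under the action of $\mathfrak{mp}(H')\,\widehat{\otimes}_\mathbb{C}\,\mathscr{O}_{\widehat{\mathcal{A}}_g}$. This gives (i) directly. It also sidesteps the identity $(\mathfrak{h}_\Gamma\,\widehat{\otimes}_\mathbb{C}\,H'_+)(V)=V$, which holds for $\pi_\Gamma$ but not for $\pi^0_\Gamma$, where the annihilators act by $0$.
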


\begin{proof}
Consider the sheaf of Lie algebras
\begin{equation}
\label{eq:spoutGamma}
\mathfrak{sp}_{\Gamma}^{\mathrm{out}}\left( H' \right) := 
\left(\mathfrak{h}_\Gamma \otimes_{\mathbb{C}} \mathbb{F} \right)
\widehat{\otimes}_{\mathbb{C}} 
\left(\mathfrak{h}_\Gamma \,\widehat{\otimes}_\mathbb{C}\, H'\right).
\end{equation}
The proof of the statement is similar to the proof of Theorem \ref{thm:houtispreserved} upon replacing $\mathfrak{sp}_{\Gamma}^{+, \mathrm{out}}\left(X,\Theta\right)$ there with $\mathfrak{sp}_{\Gamma}^{\mathrm{out}}\left( H' \right)$ here.

In particular, \eqref{eq:houtissp+out} extends here as
\begin{equation}
\label{eq:houtisspout}
\mathfrak{h}_\Gamma^{\mathrm{out}} \left( V\otimes_\mathbb{C} \mathscr{O}_{\widehat{\mathcal{A}}_g}\right) = 
\mathfrak{sp}_{\Gamma}^{\mathrm{out}}\left(H' \right) \left( V\otimes_\mathbb{C} \mathscr{O}_{\widehat{\mathcal{A}}_g}\right),
\end{equation}
which follows similarly. Thus it is enough to show that the subsheaf 
\[
\mathfrak{sp}_{\Gamma}^{\mathrm{out}}\left(H' \right)
\left( V\otimes_\mathbb{C} \mathscr{O}_{\widehat{\mathcal{A}}_g}\right) \subset
V\otimes_\mathbb{C} \mathscr{O}_{\widehat{\mathcal{A}}_g}
\]
is preserved by the action of $\mathfrak{mp}\left(H'\right) \,\widehat{\otimes}_\mathbb{C}\, \mathscr{O}_{\widehat{\mathcal{A}}_g}$ on $V\otimes_\mathbb{C} \mathscr{O}_{\widehat{\mathcal{A}}_g}$.

When $\mathrm{rank}(\Gamma) = 1$, 
the sheaf of Lie algebras \eqref{eq:spoutGamma} recovers \eqref{eq:spout}, that is, one has 
\begin{equation}
\label{eq:spGammaoutspout}
\mathfrak{sp}_{\Gamma}^{\mathrm{out}}\left( H' \right) = \mathfrak{sp}^{\mathrm{out}}\left( H' \right).
\end{equation}
In this case, the statement follows from Corollary \ref{cor:spoutpreserved}(i). 
The case \mbox{$\mathrm{rank}(\Gamma) > 1$} follows by linearity on $\mathfrak{h}_\Gamma$.
Hence part (i) of the statement.

Since $\mathfrak{sp}^{\mathrm{out}}\left(H' \right)$ is a sheaf of Lie subalgebras of $\mathfrak{sp}_{\Gamma}^{\mathrm{out}}\left(H' \right)$, part (ii) follows from \eqref{eq:houtisspout}.
\end{proof}

\begin{remark}
\label{rmk:avvavsavha}
When $\mathrm{rank}(\Gamma) = 1$, combining \eqref{eq:houtisspout} with \eqref{eq:spGammaoutspout}, one has
\[
\mathfrak{h}_\Gamma^{\mathrm{out}} \left( V\otimes_\mathbb{C} \mathscr{O}_{\widehat{\mathcal{A}}_g}\right) = 
\mathfrak{sp}^{\mathrm{out}}\left(H' \right) \left( V\otimes_\mathbb{C} \mathscr{O}_{\widehat{\mathcal{A}}_g}\right).
\]
Since $\mathfrak{sp}^{\mathrm{out}}\left(H' \right)$ is the sheaf of Lie algebras of outgoing states used in \cite{avva}, 
it follows that the coinvariants defined in this paper coincide with the coinvariants from \cite{avva} for $\mathrm{rank}(\Gamma) = 1$. When $\mathrm{rank}(\Gamma) > 1$, $\mathfrak{sp}^{\mathrm{out}}\left(H' \right)$ is a sheaf of \textit{proper} Lie subalgebras of $\mathfrak{sp}_{\Gamma}^{\mathrm{out}}\left(H' \right)$, thus the coinvariants defined here are a \textit{proper quotient} of the coinvariants from \cite{avva}.
\end{remark}

As a consequence, we have the following result. Let $d:=\mathrm{rank}(\Gamma)$.

\begin{corollary}
\label{cor:twistedDmodonVhat}
For $g\geq 3$, the sheaf $\widehat{\mathbb{V}}(V)$ on $\widehat{\mathcal{A}}_g$ carries an action of the Atiyah algebra $\frac{d}{2}\,\mathscr{F}_\Lambda$, inducing a twisted $\mathscr{D}$-module structure.
\end{corollary}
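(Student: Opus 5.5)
The plan is to read off the Atiyah algebra action from the back face of Figure~\ref{fig:bigAtiyahLambdadiag}, using Theorem~\ref{thm:houtispreservedmp}. By Theorem~\ref{thm:houtispreservedmp}(i), the sheaf of Lie algebras $\mathfrak{mp}(H')\,\widehat{\otimes}_\mathbb{C}\,\mathscr{O}_{\widehat{\mathcal{A}}_g}$ acts on $\widehat{\mathbb{V}}(V)$ through \eqref{eq:mpOonVO}. This action is a Lie algebroid-type action: $\mathscr{O}$-multiples act by the $\mathscr{O}$-linear part plus the anchor derivation on functions. By Theorem~\ref{thm:houtispreservedmp}(ii), the subsheaf $\mathfrak{sp}^{\mathrm{out}}(H')$, embedded via the normal-ordering splitting $fh\mapsto :\!fh\!:$ from \cite[Prop.~5.3]{avva}, acts trivially. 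One point needs a check here: the splitting used in Figure~\ref{fig:bigAtiyahLambdadiag} must agree with the copy of $\mathfrak{sp}^{\mathrm{out}}$ that acts on $V$ inside $\mathfrak{sp}^{\mathrm{out}}_\Gamma$ through the normally ordered elements \eqref{eq:spoutGamma}. Since the $\mathfrak{mp}$-action of Lemma~\ref{lemma:mpaction} sends $:\!b_mb_n\!:$ to $\sum_i :\!H^i_mH^i_n\!:$, this should hold.

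Hence the action factors through the quotient
\[
\mathfrak{mp}(H')\,\widehat{\otimes}_\mathbb{C}\,\mathscr{O}_{\widehat{\mathcal{A}}_g}\big/\mathfrak{sp}^{\mathrm{out}}(H').
\]
The quotient is well defined as a Lie algebroid because $\mathfrak{sp}^{\mathrm{out}}(H')$ is an ideal-like subsheaf preserved by the self-action (Corollary~\ref{cor:spoutpreserved}(i)). For $g\ge 3$ this quotient is identified with $\mathscr{F}_\Lambda$ by the exactness of the rows of Figure~\ref{fig:bigAtiyahLambdadiag}. The central element $\bm{1}$ maps to $\frac12\in\mathscr{O}$ in $\mathscr{F}_\Lambda$, since $\bm{1}$ acts on $\Lambda$ by $2$. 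We therefore obtain an action of $\mathscr{F}_\Lambda$ on $\widehat{\mathbb{V}}(V)$ in which the unit function of $\mathscr{F}_\Lambda$ (the image of $2\cdot\bm{1}$) acts by $2d$, because $\bm{1}\mapsto d\,\mathrm{id}_V$ by Lemma~\ref{lemma:mpaction}(ii).

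To make the constant function $1$ act by the identity, I would rescale and use the Atiyah algebra $c\,\mathscr{F}_\Lambda$: this is the pushout of $\mathscr{F}_\Lambda$ along multiplication by $c$ on $\mathscr{O}$. Tracking the normalization, $\bm{1}\mapsto\frac12$ in $\mathscr{F}_\Lambda$ while $\bm{1}$ acts by $d$, so the correct twist is $c=\frac d2$. Then $1\in\mathscr{O}\subset\frac d2\mathscr{F}_\Lambda$ corresponds to $\frac{2}{d}\cdot\frac{d}{2}$ of the normalization and acts by the identity. I expect bookkeeping the factor of $2$ between $\bm{1}$ and $1\in\mathscr{O}$ to be the most error-prone step. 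I would fix it by comparing with the restriction to $\widehat{\mathcal{M}}_g$, where $\mathrm{Vir}$ with central charge $d$ acts, giving the known twist $\frac d2\Lambda$ \cite{dgt}.

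An action of the Atiyah algebra $\frac d2\mathscr{F}_\Lambda$ in which $\mathscr{O}$ acts by multiplication is, by definition, a module structure over the sheaf of twisted differential operators $\mathscr{D}_{\Lambda^{d/2}}$. This gives the twisted $\mathscr{D}$-module structure. The remaining technical point is that the action is $\mathscr{O}$-compatible, meaning $(fS)\cdot v=f(S\cdot v)$ on the quotient. This holds modulo $\mathfrak{sp}^{\mathrm{out}}$-terms, which act by zero on $\widehat{\mathbb{V}}(V)$.
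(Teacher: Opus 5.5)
Your structural argument is the paper's. Theorem~\ref{thm:houtispreservedmp}(i) gives the action of $\mathfrak{mp}(H')\,\widehat{\otimes}_\mathbb{C}\,\mathscr{O}_{\widehat{\mathcal{A}}_g}$, and part (ii) kills $\mathfrak{sp}^{\mathrm{out}}(H')$. The exact row of Figure~\ref{fig:bigAtiyahLambdadiag} then makes the action factor through some multiple $k\,\mathscr{F}_\Lambda$, and $k$ is read off from $\bm{1}\mapsto d\,\mathrm{id}_V$. Your extra check, that the normal-ordering splitting matches the diagonal embedding into $\mathfrak{sp}^{\mathrm{out}}_\Gamma(H')$, is a good one.

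The problem is the normalization step, which is the only place the constant $\frac{d}{2}$ comes from. As written, it does not follow from your own premises.
\begin{itemize}
\item ``$\bm{1}$ acts on $\Lambda$ by $2$'' means that the image of $\bm{1}$ in $\mathscr{O}\subset\mathscr{F}_\Lambda$ (first-order operators on $\Lambda$) is multiplication by $2$, not $\frac12$.
\item Even granting $\bm{1}\mapsto\frac12$, you correctly deduce that $1\in\mathscr{O}\subset\mathscr{F}_\Lambda$ acts on $\widehat{\mathbb{V}}(V)$ by $2d$. With your convention that $c\,\mathscr{F}_\Lambda$ is the pushout along $\mathscr{O}\xrightarrow{c}\mathscr{O}$, the map $\mathscr{F}_\Lambda\to c\,\mathscr{F}_\Lambda$ sends $1\mapsto c$. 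So the unit of $c\,\mathscr{F}_\Lambda$ acts by the identity exactly when $c=2d$, not $c=\frac d2$.
\item The justification you offer for $\frac d2$ (``corresponds to $\frac{2}{d}\cdot\frac{d}{2}$ of the normalization'') is not an actual computation. The value is effectively imported from \cite{dgt}.
\end{itemize}

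The consistent bookkeeping goes as follows. Since $\bm{1}$ acts on $\Lambda$ by $2$, it maps to $2\in\mathscr{O}\subset\mathscr{F}_\Lambda$. Equivalently, the unit $1$ of $\mathscr{F}_\Lambda$ is the image of $\frac12\bm{1}$; this is the reading of the label $\frac12$ on the top arrow of Figure~\ref{fig:bigAtiyahLambdadiag} that matches both $\bm{1}$ acting on $\Lambda$ by $2$ and the paper's conclusion. By Lemma~\ref{lemma:mpaction}(ii), $\frac12\bm{1}$ acts on $\widehat{\mathbb{V}}(V)$ by $\frac d2$. So the induced $\mathscr{F}_\Lambda$-action has $1$ acting by $\frac d2$, and it factors through the pushout $\frac d2\,\mathscr{F}_\Lambda$ with $\mathscr{O}$ acting by multiplication. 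This is the paper's $k=\frac d2$.

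With the arrow read this way, no calibration against the Torelli locus is needed. It remains a valid sanity check, since the diagram restricts to the Virasoro picture with central charge $d$ by Lemma~\ref{lemma:mpaction}(iii).
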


\begin{proof}
Recall the exact sequence
\[
\begin{tikzcd}
\mathfrak{sp}^{\mathrm{out}}\left(H' \right) \arrow[hookrightarrow]{r} &
\mathfrak{mp}\left( H'\right) \,\widehat{\otimes}_\mathbb{C}\, \mathscr{O}_{\widehat{\mathcal{A}}_g} \arrow[twoheadrightarrow]{r}
&\mathscr{F}_{\Lambda}
\end{tikzcd}
\]
from Figure \ref{fig:bigAtiyahLambdadiag}.
From Theorem \ref{thm:houtispreservedmp}(i), one has an action of $\mathfrak{mp}\left( H'\right) \,\widehat{\otimes}_\mathbb{C}\, \mathscr{O}_{\widehat{\mathcal{A}}_g}$ on $\widehat{\mathbb{V}}(V)$. From Theorem \ref{thm:houtispreservedmp}(ii), the induced action of $\mathfrak{sp}^{\mathrm{out}}\left(H' \right)$ on $\widehat{\mathbb{V}}(V)$ is trivial. Hence the action of $\mathfrak{mp}\left( H'\right) \,\widehat{\otimes}_\mathbb{C}\, \mathscr{O}_{\widehat{\mathcal{A}}_g}$
 factors to an action of the Atiyah algebra $k\,\mathscr{F}_{\Lambda}$ on $\widehat{\mathbb{V}}(V)$ for some $k\in\mathbb{C}$.

From \S\ref{sec:mpaction}, the central element $\bm{1}$ in $\mathfrak{mp}\left( H'\right)$ acts on $V$ as multiplication by $d$. 
It follows from the diagram in Figure \ref{fig:bigAtiyahLambdadiag} that $k=\frac{d}{2}$, that is, the Atiyah algebra $\frac{d}{2}\,\mathscr{F}_{\Lambda}$ acts on $\widehat{\mathbb{V}}(V)$.
\end{proof}

\subsection{Equivariant structure}
In preparation for the descent to ${\mathcal{A}}_g$, we have the following global version of Theorem \ref{thm:expactionSp+}. 
Recall the splitting
\[
\mathfrak{sp}^+\left(H'\right) \hookrightarrow \mathfrak{mp}\left(H'\right)
\]
from \eqref{eq:sp+splitting}. Thus the action of $\mathfrak{mp}\left(H'\right) \,\widehat{\otimes}_\mathbb{C}\, \mathscr{O}_{\widehat{\mathcal{A}}_g}$ on $\widehat{\mathbb{V}}(V)$ from Theorem \ref{thm:houtispreservedmp} induces an action of $\mathfrak{sp}^+\left(H'\right)$ on $\widehat{\mathbb{V}}(V)$.

\begin{theorem}
\label{thm:expactionSp+indgroup}
\begin{enumerate}[(i)]
\item The action of $\mathfrak{sp}^+ \left( H' \right)$ on $\widehat{\mathbb{V}}(V)$ can be exponentiated  to an $\mathbb{S}\mathrm{p}^+\left(H' \right)$-equivariant structure on $\widehat{\mathbb{V}}(V)$; 

\smallskip

\item For $a=(Z,F,L)$ in $\widehat{\mathcal{A}}_g$, the $\mathbb{S}\mathrm{p}^+\left(H' \right)$-equivariant structure on $\widehat{\mathbb{V}}(V)$ induces a trivial action of $\mathbb{S}\mathrm{p}^+_F\left(H' \right)$ on the fiber $\widehat{\mathbb{V}}(a,V)$ of $\widehat{\mathbb{V}}(V)$ at $a$.
\end{enumerate}
\end{theorem}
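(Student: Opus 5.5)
The plan is to reduce to the fiberwise statement, Theorem \ref{thm:expactionSp+}, and follow its argument, which is local in nature. The group scheme $\mathbb{S}\mathrm{p}^+\left(H'\right)$ preserves the fibers of $\widehat{\mathcal{A}}_g\rightarrow\mathcal{A}_g$ and acts transitively on each of them (\S\ref{sec:gpschemes}). By the Cartesian diagram in \S\ref{sec:VhatonAg}, $\widehat{\mathbb{V}}(V)$ restricts on each fiber to $\widehat{\mathbb{V}}(X,\Theta,V)\rightarrow\mathrm{Sp}(X,\Theta)$. The action of $\mathfrak{sp}^+\left(H'\right)$ on $\widehat{\mathbb{V}}(V)$ obtained from Theorem \ref{thm:houtispreservedmp}(i) via the splitting \eqref{eq:sp+splitting} is by vector fields tangent to these fibers. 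Its restriction to each fiber is the action from Theorem \ref{thm:houtispreserved}(i), since both come from the same formula \eqref{eq:mpOonVO} on $V\otimes\mathscr{O}$.

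First I would check that exponentiation makes sense at the level of group schemes. As $\mathbb{S}\mathrm{p}^+\left(H'\right)$ is connected, it is enough to integrate the generators $b_ib_j$ (with $i,j$ not both negative) together with the isotropy part, and to handle the finite-dimensional directions $Z/F\,\widehat{\otimes}\,H'_+$ using the Lie product formula, as before.

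\begin{itemize}
\item For $i+j\neq 0$, the operator $\sum_k H^k_iH^k_j$ is locally nilpotent on $V$, and the vector field it induces on $\widehat{\mathcal{A}}_g$ integrates along the group action. Hence $\exp$ of the combined action on $V\otimes\mathscr{O}_{\widehat{\mathcal{A}}_g}$ is locally a finite sum twisted by the pullback along the flow.
\item For $i=-j$, the operator acts semisimply on $V$ with integral eigenvalues, so it integrates to a $\mathbb{G}_m$-action compatible with the geometric $\mathbb{G}_m$-action on $\widehat{\mathcal{A}}_g$.
\end{itemize}

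Theorem \ref{thm:houtispreservedmp}(i) says the subsheaf $\mathfrak{h}_\Gamma^{\mathrm{out}}(V\otimes\mathscr{O})$ is infinitesimally stable. Taking exponentials of that statement, these integrated actions preserve the subsheaf and hence descend to $\widehat{\mathbb{V}}(V)$. The cocycle condition then follows from the fiberwise one together with the fact that equivariance data are determined fiber by fiber over the base $\mathcal{A}_g$ (more precisely over local charts of $\widehat{\mathcal{H}}_g$, on which the $\mathcal{H}_g$ factor is a parameter). Rank $>1$ follows by linearity in $\mathfrak{h}_\Gamma$, as in the earlier proofs.

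For part (ii), I would argue pointwise. The fiber of $\widehat{\mathbb{V}}(V)$ at $a$ is $\widehat{\mathbb{V}}(a,V)$, and $\mathfrak{sp}^+_F\left(H'\right)\subset\mathfrak{sp}^{\mathrm{out}}\left(H'\right)_a$. Theorem \ref{thm:houtispreservedmp}(ii) says $\mathfrak{sp}^{\mathrm{out}}\left(H'\right)$ acts trivially. Equivalently, part (ii) is Theorem \ref{thm:houtispreserved}(ii) applied on $\mathrm{Sp}(X,\Theta)$ for the underlying $(X,\Theta)$ of $a$. The stabilizer $\mathbb{S}\mathrm{p}^+_F\left(H'\right)$ is connected, so it is generated by exponentials of its Lie algebra, and it therefore acts trivially on the fiber.

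The main obstacle is making the exponentiation rigorous in families. The earlier proof used the Lie product formula together with a vector-space decomposition of $\mathfrak{sp}^+\left(H'\right)$ depending on a chosen point $a$. Globally that decomposition varies with $a$, so I would work locally on $\widehat{\mathcal{H}}_g$, where $\mathbb{F}$ is trivialized by the frame $\bm{h}$ and the graph $\varphi$. There the decomposition $\widetilde S^2(H'_+)\times(Z/F\,\widehat{\otimes}\,H'_+)\times\mathfrak{sp}^+_F$ can be chosen holomorphically. Compatibility with the free $\mathrm{Sp}(2g,\mathbb{Z})$-quotient is automatic, since \eqref{eq:Sp2gZaction} commutes with the $\mathbb{S}\mathrm{p}^+\left(H'\right)$-action.
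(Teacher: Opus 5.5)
Your proposal follows the paper's route: the paper's entire proof is that the statement follows as in the fiberwise Theorem~\ref{thm:expactionSp+}, and you reduce to exactly that argument. That argument consists of exponentiating the generators $b_ib_j$ via local nilpotence or integral semisimplicity, the Lie product formula using $\dim Z/F = g$, linearity in $\mathfrak{h}_\Gamma$, and triviality of $\mathbb{S}\mathrm{p}^+_F(H')$ on fibers via Theorem~\ref{thm:houtispreservedmp}(ii). Your extra remarks, working over charts of $\widehat{\mathcal{H}}_g$ and checking that each exponentiated generator preserves $\mathfrak{h}_\Gamma^{\mathrm{out}}(V\otimes\mathscr{O})$, only make explicit what the paper leaves implicit.
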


Here $\mathbb{S}\mathrm{p}^+\left(H' \right)$ and $\mathbb{S}\mathrm{p}^+_F\left(H' \right)$ are the group schemes from \S\ref{sec:gpschemes}.

\begin{proof}
The statement follows similarly to its local version Theorem \ref{thm:expactionSp+}. 
\end{proof}

\subsection{Descent to ${\mathcal{A}}_g$}
Finally, we show the global version of \S\ref{sec:coinvonPPAVs}:

\begin{corollary}
\label{cor:VonAg}
The sheaf $\widehat{\mathbb{V}}(V)$ on $\widehat{\mathcal{A}}_g$ descends to a sheaf ${\mathbb{V}}(V)$ on ${\mathcal{A}}_g$.
\end{corollary}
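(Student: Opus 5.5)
The descent will be the global analogue of the two-step descent in \S\ref{sec:coinvonPPAVs}, carried out in families over the map $\widehat{\mathcal{A}}_g\rightarrow\mathcal{A}_g$ instead of over a single point $(X,\Theta)$. First, Theorem \ref{thm:expactionSp+indgroup}(i) gives an $\mathbb{S}\mathrm{p}^+\left(H'\right)$-equivariant structure on $\widehat{\mathbb{V}}(V)$. By \S\ref{sec:gpschemes}, the group scheme $\mathbb{S}\mathrm{p}^+\left(H'\right)$ acts on $\widehat{\mathcal{A}}_g$ transitively along the fibers of $\widehat{\mathcal{A}}_g\rightarrow\mathcal{A}_g$, with stabilizer $\mathbb{S}\mathrm{p}^+_F\left(H'\right)$ at $(Z,F,L)$. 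Hence $\widehat{\mathbb{V}}(V)$ descends to a quasi-coherent sheaf $[\widehat{\mathbb{V}}(V)/\mathbb{S}\mathrm{p}^+\left(H'\right)]$ on the quotient stack $[\widehat{\mathcal{A}}_g/\mathbb{S}\mathrm{p}^+\left(H'\right)]$.

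Second, I will identify the morphism $[\widehat{\mathcal{A}}_g/\mathbb{S}\mathrm{p}^+\left(H'\right)]\rightarrow\mathcal{A}_g$ as a gerbe-like map. It is surjective because every PPAV admits an extension, and its fibers are $[\mathrm{Sp}(X,\Theta)/\mathbb{S}\mathrm{p}^+\left(H'\right)]\cong B\,\mathbb{S}\mathrm{p}^+_F\left(H'\right)$ by Theorem \ref{thm:Sp+action}. Theorem \ref{thm:expactionSp+indgroup}(ii) says that these stabilizers act trivially on the fibers $\widehat{\mathbb{V}}(a,V)$. With this, I will apply the criterion \cite[Thm 10.3]{alper2013good}, in its relative form, as in \S\ref{sec:coinvonPPAVs}: a quasi-coherent sheaf on a good moduli space source whose stabilizers act trivially is the pullback of a quasi-coherent sheaf on the base. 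This produces $\mathbb{V}(V)$ on $\mathcal{A}_g$. Restricting to a point $(X,\Theta)$ and using the Cartesian diagram in \S\ref{sec:VhatonAg} will show that its fibers are the spaces $\mathbb{V}(X,\Theta,V)$.

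An alternative for the second step is to replace the group quotient by the composition $\widehat{\mathcal{H}}_g\rightarrow\widehat{\mathcal{A}}_g\rightarrow\mathcal{A}_g$ via the Siegel space. There the fiber $\widetilde{\mathrm{Sp}}(X,\Theta)$ is a product of a contractible factor $\widetilde{S}^2(H'_+)\times\mathcal{B}_g(H'_+)$ with the discrete set $\mathrm{P}(X,\Theta)$. One would then handle the $\mathrm{Sp}(2g,\mathbb{Z})$-equivariance separately; it is automatic since $\widehat{\mathbb{V}}(V)$ is pulled back from $\widehat{\mathcal{A}}_g$.

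The main obstacle is justifying the descent step in this infinite-dimensional setting. The stabilizers are not reductive or finite type, so Alper's hypotheses do not literally hold, and $\widehat{\mathcal{A}}_g$ is a complex manifold rather than an algebraic stack. My first approach is to note that triviality of the stabilizer action upgrades the equivariant structure to descent data for the groupoid $\widehat{\mathcal{A}}_g\times_{\mathcal{A}_g}\widehat{\mathcal{A}}_g\rightrightarrows\widehat{\mathcal{A}}_g$. Indeed, two elements of $\mathbb{S}\mathrm{p}^+\left(H'\right)$ carrying $a$ to $a'$ differ by an element of $\mathbb{S}\mathrm{p}^+_F\left(H'\right)$, so the induced isomorphism $\widehat{\mathbb{V}}(a,V)\cong\widehat{\mathbb{V}}(a',V)$ is independent of the choice. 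The cocycle condition is then inherited from the group action. Descent along the surjective submersion $\widehat{\mathcal{A}}_g\rightarrow\mathcal{A}_g$, with connected fibers by Theorem \ref{thm:Sp+action}(i), should then follow, with the connectedness of the fibers preventing any extra monodromy.
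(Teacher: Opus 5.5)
Your main argument is the paper's proof: Theorem~\ref{thm:expactionSp+indgroup}(i) gives a sheaf on $[\widehat{\mathcal{A}}_g/\mathbb{S}\mathrm{p}^+(H')]$, and the trivial $\mathbb{S}\mathrm{p}^+_F(H')$-action from part~(ii) together with \cite[Thm 10.3]{alper2013good} descends it to $\mathcal{A}_g$. Your caveat that Alper's hypotheses are not literally satisfied here is fair, and the paper does not address it.

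If you pursue the groupoid fallback, remember that $\mathcal{A}_g$ is a stack. A point of $\widehat{\mathcal{A}}_g\times_{\mathcal{A}_g}\widehat{\mathcal{A}}_g$ therefore includes an isomorphism $\phi$ of the underlying PPAVs. Only the elements carrying $a$ to $a'$ \emph{and inducing} $\phi$ form an $\mathbb{S}\mathrm{p}^+_F(H')$-coset. For instance, $-\mathrm{id}=\exp(i\pi E)$ with $E b_i=\mathrm{sgn}(i)\,b_i$ lies in $\mathbb{S}\mathrm{p}^+(H')$, fixes every point of $\widehat{\mathcal{A}}_g$, yet acts nontrivially on $\widehat{\mathbb{V}}(a,V)$. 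So your claim that the induced isomorphism is independent of the choice holds only for fixed $\phi$.
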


\begin{proof}
From Theorem \ref{thm:expactionSp+indgroup}(i), the sheaf $\widehat{\mathbb{V}}(V)\rightarrow \widehat{\mathcal{A}}_g$ is 
$\mathbb{S}\mathrm{p}^+\left(H' \right)$-equivariant. Thus it descends to a sheaf \sloppy{$[\widehat{\mathbb{V}}(V)/ \mathbb{S}\mathrm{p}^+\left(H' \right) ]$} on the quotient stack $[\widehat{\mathcal{A}}_g/\mathbb{S}\mathrm{p}^+\left(H' \right)]$.

Moreover, from Theorem \ref{thm:expactionSp+indgroup}(ii), the stabilizer $\mathbb{S}\mathrm{p}^+_F\left(H' \right)$ of a point $(Z,F,L)$ in $\widehat{\mathcal{A}}_g$ acts trivially on $\widehat{\mathbb{V}}(V)$. Applying \cite[Thm 10.3]{alper2013good}, the sheaf 
$[\widehat{\mathbb{V}}(V)/ \mathbb{S}\mathrm{p}^+\left(H' \right) ]$ further descends to a sheaf $\mathbb{V}(V)$ on ${\mathcal{A}}_g$.
\end{proof}

To summarize, this is the diagram of descents:
\[
\begin{tikzcd}
\widehat{\mathbb{V}}(V) \arrow[]{r} \arrow[]{d} & \left[\widehat{\mathbb{V}}(V)/ \mathbb{S}\mathrm{p}^+\left(H' \right) \right]\arrow[]{r} \arrow[]{d} & \mathbb{V}(V) \arrow[]{d}\\
\widehat{\mathcal{A}}_g \arrow[]{r} & \left[\widehat{\mathcal{A}}_g/\mathbb{S}\mathrm{p}^+\left(H' \right)\right] \arrow[]{r} & {\mathcal{A}}_g.
\end{tikzcd}
\]

Next, we show:

\begin{corollary}
\label{cor:twistedDmodonV}
For $g\geq 3$, the sheaf ${\mathbb{V}}(V)$ on ${\mathcal{A}}_g$ carries an action of the Atiyah algebra $\frac{d}{2}\,\mathscr{F}_\Lambda$, inducing a twisted $\mathscr{D}$-module structure.
\end{corollary}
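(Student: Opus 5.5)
The plan is to descend the Atiyah algebra action of Corollary \ref{cor:twistedDmodonVhat} along the map $\widehat{\mathcal{A}}_g\rightarrow \mathcal{A}_g$, using the same two-step descent as in Corollary \ref{cor:VonAg}. First, recall that $\Lambda$ on $\widehat{\mathcal{A}}_g$ is by definition the pullback of the Hodge bundle on $\mathcal{A}_g$. Hence its Atiyah algebra $\mathscr{F}_\Lambda$ on $\widehat{\mathcal{A}}_g$ fits into an exact sequence
\[
0\rightarrow \mathscr{T}_{\widehat{\mathcal{A}}_g/\mathcal{A}_g}\rightarrow \mathscr{F}_{\Lambda}\rightarrow \pi^*\mathscr{F}_{\Lambda}^{\mathcal{A}_g}\rightarrow 0,
\]
where $\pi\colon\widehat{\mathcal{A}}_g\rightarrow\mathcal{A}_g$ and the vertical tangent sheaf is the image of $\mathfrak{sp}^+(H')\,\widehat{\otimes}_\mathbb{C}\,\mathscr{O}_{\widehat{\mathcal{A}}_g}$ under the anchor. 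This uses the splitting \eqref{eq:sp+splitting} together with the fact that $\mathbb{S}\mathrm{p}^+(H')$ acts transitively on the fibers of $\pi$ (Theorem \ref{thm:Sp+action}). Pushing forward $\mathbb{S}\mathrm{p}^+(H')$-invariant sections of $\mathscr{F}_\Lambda$ then recovers the Atiyah algebra of the Hodge bundle on $\mathcal{A}_g$.

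The key steps, in order, are as follows.
\begin{enumerate}[(1)]
\item Check that the $\frac{d}{2}\mathscr{F}_\Lambda$-action on $\widehat{\mathbb{V}}(V)$ is compatible with the $\mathbb{S}\mathrm{p}^+(H')$-equivariant structure of Theorem \ref{thm:expactionSp+indgroup}. Infinitesimally this is immediate: both actions come from the single $\mathfrak{mp}(H')\,\widehat{\otimes}_\mathbb{C}\,\mathscr{O}_{\widehat{\mathcal{A}}_g}$-action of Theorem \ref{thm:houtispreservedmp}, and the equivariant structure was obtained by exponentiating its restriction to $\mathfrak{sp}^+(H')$. Since $\mathbb{S}\mathrm{p}^+(H')$ is connected, the group-level compatibility (equivariance of the action map) follows by exponentiation.
\item Deduce that the action of $\mathbb{S}\mathrm{p}^+(H')$-invariant local sections of $\frac{d}{2}\mathscr{F}_\Lambda$ preserves $\mathbb{S}\mathrm{p}^+(H')$-invariant sections of $\widehat{\mathbb{V}}(V)$. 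By the descent in Corollary \ref{cor:VonAg}, these invariant sections are exactly the pullbacks of sections of $\mathbb{V}(V)$.
\item Observe that sections of $\mathscr{T}_{\widehat{\mathcal{A}}_g/\mathcal{A}_g}$, lifted through $\mathfrak{sp}^+(H')$, act on invariant sections by the derivative of the equivariant structure, i.e.\ by zero. So the action factors through the quotient $\pi^*\mathscr{F}_{\Lambda}^{\mathcal{A}_g}$, and after descent it gives an action of $\frac{d}{2}\mathscr{F}_\Lambda$ on $\mathbb{V}(V)$ over $\mathcal{A}_g$.
\item Check that the central element still acts by the scalar $\frac{d}{2}$ and that the Lie algebroid axioms survive. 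Both hold because they hold upstairs and $\pi^*$ is faithful on the relevant sheaves.
\end{enumerate}

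I expect the main obstacle to be making the identification of $\mathscr{F}_\Lambda^{\mathcal{A}_g}$ with invariant sections modulo the vertical part rigorous. $\mathcal{A}_g$ is a stack, $\pi$ is not a principal bundle (the stabilizers $\mathbb{S}\mathrm{p}^+_F(H')$ are nontrivial), and the infinite-dimensionality requires completed tensor products. I would handle this as in Corollary \ref{cor:VonAg}: first descend to the quotient stack $[\widehat{\mathcal{A}}_g/\mathbb{S}\mathrm{p}^+(H')]$, where equivariant descent of Lie algebroid modules is formal, and then pass to $\mathcal{A}_g$ via \cite[Thm 10.3]{alper2013good}. For the second step, note that the stabilizers act trivially both on fibers of $\widehat{\mathbb{V}}(V)$ (Theorem \ref{thm:expactionSp+indgroup}(ii)) and on the image of $\mathfrak{sp}^{\mathrm{out}}(H')$, which is killed (Theorem \ref{thm:houtispreservedmp}(ii)). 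Finally, the hypothesis $g\geq 3$ enters only through Corollary \ref{cor:twistedDmodonVhat}, i.e.\ through the existence of the $\mathfrak{mp}(H')$-action on $\Lambda$.
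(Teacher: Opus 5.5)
Your proposal is correct and is essentially the paper's argument. Compatibility of the $\mathfrak{mp}(H')\,\widehat{\otimes}_\mathbb{C}\,\mathscr{O}_{\widehat{\mathcal{A}}_g}$-action with the exponentiated $\mathbb{S}\mathrm{p}^+(H')$-equivariant structure makes the $\frac{d}{2}\mathscr{F}_\Lambda$-action equivariant, so it descends to $[\widehat{\mathcal{A}}_g/\mathbb{S}\mathrm{p}^+(H')]$, and triviality of the stabilizer actions then gives the descent to $\mathcal{A}_g$ via \cite[Thm 10.3]{alper2013good}; your exact sequence relating $\mathscr{F}_\Lambda$ to the pull-back of the Atiyah algebra of the Hodge bundle, with vertical fields lifted through $\mathfrak{sp}^+(H')$ killing invariant sections, only makes explicit an identification the paper leaves implicit.
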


\begin{proof}
From Corollary \ref{cor:twistedDmodonVhat}, the action of $\mathfrak{mp}\left( H'\right) \,\widehat{\otimes}_\mathbb{C}\, \mathscr{O}_{\widehat{\mathcal{A}}_g}$ on $\widehat{\mathbb{V}}(V)$
 factors to an action of the Atiyah algebra $\frac{d}{2}\mathscr{F}_{\Lambda}$ on $\widehat{\mathbb{V}}(V)$.
From Theorem \ref{thm:expactionSp+indgroup}, the $\mathbb{S}\mathrm{p}^+\left(H' \right)$-equivariant structure on $\widehat{\mathbb{V}}(V)$ is obtained by exponentiating the action of $\mathfrak{sp}^+ \left( H' \right)$ on $\widehat{\mathbb{V}}(V)$.

Since the actions of $\mathfrak{mp}\left( H'\right) \,\widehat{\otimes}_\mathbb{C}\, \mathscr{O}_{\widehat{\mathcal{A}}_g}$ 
and $\mathbb{S}\mathrm{p}^+\left(H' \right)$ on $\widehat{\mathbb{V}}(V)$ are compatible, 
it follows that the action of $\frac{d}{2}\mathscr{F}_{\Lambda}$ on $\widehat{\mathbb{V}}(V)$ is $\mathbb{S}\mathrm{p}^+\left(H' \right)$-equivariant and thus descends to an action of $\frac{d}{2}\mathscr{F}_{\Lambda}$ on the sheaf $[\widehat{\mathbb{V}}(V)/ \mathbb{S}\mathrm{p}^+\left(H' \right) ]$ over the quotient stack $[\widehat{\mathcal{A}}_g/\mathbb{S}\mathrm{p}^+\left(H' \right)]$.

Finally, the stabilizers of the closed points of $[\widehat{\mathcal{A}}_g/\mathbb{S}\mathrm{p}^+\left(H' \right)]$ act trivially on both 
$\widehat{\mathbb{V}}(V)$ (Theorem \ref{thm:expactionSp+indgroup}(ii)) and $\frac{d}{2}\mathscr{F}_{\Lambda}$. It follows that the action of $\frac{d}{2}\mathscr{F}_{\Lambda}$ 
on the sheaf $[\widehat{\mathbb{V}}(V)/ \mathbb{S}\mathrm{p}^+\left(H' \right)]$ descends to an action of $\frac{d}{2}\mathscr{F}_{\Lambda}$ on ${\mathbb{V}}(V)$.
\end{proof}

We conclude with:

\begin{proof}[Proof of Theorem \ref{thm:mainintroAg}]
Part (i) follows from Theorem \ref{thm:VonTorelli} and part (ii) from Corollary \ref{cor:twistedDmodonV}.
\end{proof}

\section{Sheaves of coinvariants on $\widehat{\mathcal{X}}_g$ and ${\mathcal{X}}_g$}
\label{sec:sheavesaghatxg}

Here we construct the sheaf of coinvariants $\widehat{\mathbb{V}}(V)$ on $\widehat{\mathcal{X}}_g$ carrying a twisted $\mathscr{D}$-module structure (Corollary \ref{cor:twistedDmodonVhatXg}). We then show that $\widehat{\mathbb{V}}(V)$ descends to a quasi-coherent sheaf ${\mathbb{V}}(V)$ on ${\mathcal{X}}_g$ (Corollary \ref{cor:VonXg}) also carrying a twisted $\mathscr{D}$-module structure (Corollary \ref{cor:twistedDmodonVXg}).
We conclude with the proof of Theorem~\ref{thm:mainintroXg}.

\subsection{Sheaves of coinvariants on $\widehat{\mathcal{X}}_g$}
For a Heisenberg vertex algebra $V$, 
we define the sheaf of coinvariants
\[
\widehat{\mathbb{V}}(V)\rightarrow\widehat{\mathcal{X}}_g
\]
as the pull-back of the sheaf of coinvariants on $\widehat{\mathcal{A}}_g$ from \S\ref{sec:VhatonAg} via the natural map $\widehat{\mathcal{X}}_g\rightarrow \widehat{\mathcal{A}}_g$.

Specifically, let $\mathbb{F}\rightarrow\widehat{\mathcal{X}}_g$ be the tautological sheaf whose fiber at a point $(Z,F,L,\bar{h},q)$ in $\widehat{\mathcal{X}}_g$ is $F$.
Define
\[
\mathfrak{h}_\Gamma^{\mathrm{out}}:= \mathfrak{h}_\Gamma \otimes_{\mathbb{C}} \mathbb{F}.
\]
This is a subsheaf of the sheaf of Lie algebras $\mathscr{H}_\Gamma \,\widehat{\otimes}_\mathbb{C}\, \mathscr{O}_{\widehat{\mathcal{X}}_g}$.
The action of $\mathscr{H}_\Gamma$ on $V$ extends $\mathscr{O}_{\widehat{\mathcal{X}}_g}$-linearly to an action of $\mathscr{H}_\Gamma \,\widehat{\otimes}_\mathbb{C}\, \mathscr{O}_{\widehat{\mathcal{X}}_g}$ on $V\otimes_\mathbb{C} \mathscr{O}_{\widehat{\mathcal{X}}_g}$. This induces an action of $\mathfrak{h}_\Gamma^{\mathrm{out}}$ on $V\otimes_\mathbb{C} \mathscr{O}_{\widehat{\mathcal{X}}_g}$. Then the sheaf $\widehat{\mathbb{V}}(V)$ is defined as
\[
\widehat{\mathbb{V}}(V) := V\otimes_\mathbb{C} \mathscr{O}_{\widehat{\mathcal{X}}_g}
\, / \,
\mathfrak{h}_\Gamma^{\mathrm{out}}\left( V\otimes_\mathbb{C} \mathscr{O}_{\widehat{\mathcal{X}}_g}\right).
\]
This is a quasi-coherent sheaf on $\widehat{\mathcal{X}}_g$.

The Lie algebra $\mathscr{H}_2$ acts on $V$ as in \S\ref{sec:mpaction} and acts transitively on $\widehat{\mathcal{X}}_g$ from Theorem \ref{thm:spspHunif}(ii).
These two actions induce an action of $\mathscr{H}_2 \,\widehat{\otimes}_\mathbb{C}\, \mathscr{O}_{\widehat{\mathcal{X}}_g}$ on $V\otimes_\mathbb{C} \mathscr{O}_{\widehat{\mathcal{X}}_g}$ similar to~\eqref{eq:mpOonVO}.
Recall the sheaf of isotropy Lie subalgebras $\mathfrak{sp}^{\mathrm{out}}\left(H' \right) \ltimes \mathbb{F}$ from Figure \ref{fig:bigAtiyahXidiag}. 

\begin{theorem}
\label{thm:houtispreservedU2H}
\begin{enumerate}[(i)]
\item The subsheaf 
\[
\mathfrak{h}_\Gamma^{\mathrm{out}}\left( V\otimes_\mathbb{C} \mathscr{O}_{\widehat{\mathcal{X}}_g}\right) \subset
V\otimes_\mathbb{C} \mathscr{O}_{\widehat{\mathcal{X}}_g}
\]
is preserved by the action of $\mathscr{H}_2 \,\widehat{\otimes}_\mathbb{C}\, \mathscr{O}_{\widehat{\mathcal{X}}_g}$ on $V\otimes_\mathbb{C} \mathscr{O}_{\widehat{\mathcal{X}}_g}$.
Hence one has an induced action of $\mathscr{H}_2 \,\widehat{\otimes}_\mathbb{C}\, \mathscr{O}_{\widehat{\mathcal{X}}_g}$ on the quotient $\widehat{\mathbb{V}}(V)$.

\smallskip

\item The action of $\mathscr{H}_2 \,\widehat{\otimes}_\mathbb{C}\, \mathscr{O}_{\widehat{\mathcal{X}}_g}$ on $\widehat{\mathbb{V}}(V)$ induces a trivial action of \sloppy{$\mathfrak{sp}^{\mathrm{out}}\left(H' \right)\ltimes \mathbb{F}$} on $\widehat{\mathbb{V}}(V)$.
\end{enumerate}
\end{theorem}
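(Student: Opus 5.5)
The plan mirrors the proofs of Theorem \ref{thm:houtispreserved} and Theorem \ref{thm:houtispreservedmp}, with $\mathfrak{mp}\left(H'\right)$ replaced by $\mathscr{H}_2$ and the isotropy sheaf $\mathfrak{sp}^{\mathrm{out}}\left(H'\right)$ replaced by $\mathfrak{sp}^{\mathrm{out}}\left(H'\right)\ltimes\mathbb{F}$. First I would introduce on $\widehat{\mathcal{X}}_g$ the sheaf of Lie algebras
\[
\mathscr{H}_{2,\Gamma}^{\mathrm{out}} := \mathfrak{sp}_{\Gamma}^{\mathrm{out}}\left( H' \right) \oplus \left(\mathfrak{h}_\Gamma \otimes_{\mathbb{C}} \mathbb{F}\right),
\]
where $\mathfrak{sp}_{\Gamma}^{\mathrm{out}}\left( H' \right)$ is defined as in \eqref{eq:spoutGamma} but with $\mathbb{F}$ now the tautological sheaf on $\widehat{\mathcal{X}}_g$. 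Its elements are embedded into $\mathscr{H}_\Gamma\,\widehat{\otimes}_\mathbb{C}\,\mathscr{O}_{\widehat{\mathcal{X}}_g}$ through normal ordering, as in \eqref{eq:sp+outGammaintoHO}.

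The first step is the analogue of \eqref{eq:houtisspout}:
\[
\mathfrak{h}_\Gamma^{\mathrm{out}}\left( V\otimes_\mathbb{C} \mathscr{O}_{\widehat{\mathcal{X}}_g}\right) = \mathscr{H}_{2,\Gamma}^{\mathrm{out}}\left( V\otimes_\mathbb{C} \mathscr{O}_{\widehat{\mathcal{X}}_g}\right).
\]
The inclusion $\subseteq$ is immediate, since $\mathfrak{h}_\Gamma\otimes\mathbb{F}$ is a summand. For $\supseteq$, take a normally ordered quadratic element $:\!(A\otimes f)(B\otimes h)\!:$ with $f\in F$. Since $F$ is isotropic and $Z\cap H'_+=0$, the normal ordering only moves annihilation operators to the right. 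Each such term therefore has the form either $(A\otimes f)\cdot X$ or $X\cdot(A\otimes f)$, and in the second case $X$ and $A\otimes f$ commute up to a central scalar. In every case the term lands in the image of $\mathfrak{h}_\Gamma\otimes\mathbb{F}$. Alternatively, one can argue via $(\mathfrak{h}_\Gamma\,\widehat{\otimes}\,H')(V)=V$ exactly as in the proof of Theorem \ref{thm:houtispreserved}. A small point to check here is that the central correction produced by normal ordering is a multiple of $A\otimes f$ applied to something, or is zero. This holds because $\langle f, h\rangle$ pairs $F$ only with elements outside $F^\perp$, and these terms are absorbed by the identity above.

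The second step shows that $\mathscr{H}_{2,\Gamma}^{\mathrm{out}}(V\otimes\mathscr{O})$ is preserved by $\mathscr{H}_2\,\widehat{\otimes}_\mathbb{C}\,\mathscr{O}_{\widehat{\mathcal{X}}_g}$. For $\mathrm{rank}(\Gamma)=1$ one has $\mathscr{H}_{2,\Gamma}^{\mathrm{out}}=\mathfrak{sp}^{\mathrm{out}}\left(H'\right)\ltimes\mathbb{F}$, and Corollary \ref{cor:spoutpreserved}(ii) states that this subsheaf is stable under the adjoint-plus-derivation action of $\mathscr{H}_2\,\widehat{\otimes}\,\mathscr{O}_{\widehat{\mathcal{X}}_g}$ on itself. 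I would then apply the Leibniz rule for the action on $V\otimes\mathscr{O}$:
\[
(S\otimes h)\bigl(T(v)\bigr) = \bigl((S\otimes h)\cdot T\bigr)(v) + T\bigl((S\otimes h)(v)\bigr),
\]
where $T$ is a local section of the isotropy sheaf. Both terms on the right lie in the subsheaf, which gives part (i). For $\mathrm{rank}(\Gamma)>1$ I would argue by linearity in $\mathfrak{h}_\Gamma$, as in Theorem \ref{thm:houtispreserved}. Using the orthonormal basis $\{H^i\}$, Lemma \ref{lemma:mpaction}(i) sends $b_mb_n\mapsto\sum_i H^i_mH^i_n$ and $b_k\mapsto\sum_i H^i_k$. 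The rank-one statement, applied to each copy $H^i\otimes H'$ and to the mixed products $H^i_m H^j_n$ (which form the larger Lie algebra $\mathfrak{sp}_\Gamma(H')\ltimes(\mathfrak{h}_\Gamma\otimes H')$ containing the image of $\mathscr{H}_2$), then gives stability. \textbf{This step is the main obstacle.} One needs that the equivariance of the isotropy sheaf extends from the diagonal image of $\mathscr{H}_2$ to this larger algebra. This follows because brackets with $H^i_mH^j_n$ are computed componentwise by the rank-one formulas, and the derivation part acts through the diagonal, which is exactly the given action on $\widehat{\mathcal{X}}_g$.

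Part (ii) then follows formally. The sheaf $\mathfrak{sp}^{\mathrm{out}}\left(H'\right)\ltimes\mathbb{F}$ maps into $\mathscr{H}_{2,\Gamma}^{\mathrm{out}}$ diagonally, via $:\!fh\!:\mapsto\sum_i :\!H^i_fH^i_h\!:$ and $f\mapsto\sum_i H^i\otimes f$. Its derivation part vanishes, since these are isotropy elements, i.e.\ the kernel of the anchor map in Figure \ref{fig:bigAtiyahXidiag}. Hence it acts on $V\otimes\mathscr{O}$ with image inside $\mathscr{H}_{2,\Gamma}^{\mathrm{out}}(V\otimes\mathscr{O})$, which equals $\mathfrak{h}_\Gamma^{\mathrm{out}}(V\otimes\mathscr{O})$ by the first step, so it acts trivially on the quotient $\widehat{\mathbb{V}}(V)$.
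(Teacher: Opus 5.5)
Your outline is the paper's own proof. It uses the auxiliary sheaf $\mathfrak{sp}_{\Gamma}^{\mathrm{out}}(H')\ltimes\mathfrak{h}_\Gamma^{\mathrm{out}}$ and the identity \eqref{eq:houtisspouthout}, handles rank one via Corollary \ref{cor:spoutpreserved}(ii), uses linearity in $\mathfrak{h}_\Gamma$, and gets (ii) from the subalgebra inclusion. The trouble is that both places where central terms must be controlled fail. The paper makes the same two moves without addressing these terms, so it cannot fill the gap.

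\textbf{The normal-ordering correction in your first step.} Your resolution of the ``small point'' is incorrect: the correction is not governed by $\langle f,h\rangle$. Write $f=f_-+f_+$ and $h=h_-+h_+$ along $H'=H_-\oplus H'_+$. Then
\[
:\!(A\otimes f)(B\otimes h)\!: \;=\; (A\otimes f)(B\otimes h)+(A,B)\mathop{\mathrm{Res}}_{t=0}\left(f_+\,dh_-\right),
\]
and the scalar is nonzero as soon as $f$ has a nonzero $H'_+$-component, which is the generic situation. This term is not absorbed by $(\mathfrak{h}_\Gamma\,\widehat{\otimes}_\mathbb{C}\,H'_+)(V)=V$. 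That identity only worked in Theorem \ref{thm:houtispreserved} because there $h\in H'_+$, so no correction arises. In fact the vacuum $\bm{1}$ never lies in $\mathfrak{h}_\Gamma^{\mathrm{out}}(a)(\pi_\Gamma)$. The pairwise commuting operators $\mathfrak{h}_\Gamma\otimes Z$ make $\pi_\Gamma$ free of rank one over $\mathrm{Sym}(\mathfrak{h}_\Gamma\otimes Z)$, generated by $\bm{1}$, and $F\subset Z$. So the inclusion $\supseteq$ fails with the standard normal ordering. It does hold if you realize $f\odot h$ as the honest product $(A\otimes f)(B\otimes h)$, i.e.\ normal order with respect to $Z\oplus H'_+$.

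\textbf{The rank-one step: a more serious gap.} With $\mathbb{F}$ embedded by $f\mapsto f$, your $\mathscr{H}_{2,\Gamma}^{\mathrm{out}}$ is not the isotropy sheaf $\mathfrak{sp}^{\mathrm{out}}(H')\ltimes\mathbb{F}$ of Figure \ref{fig:bigAtiyahXidiag}, and it is not stable.
\begin{enumerate}
\item A constant section $b_m\in H'\subset\mathscr{H}_2$ acts on $\widehat{\mathcal{X}}_g$ by a vector field vertical over $\widehat{\mathcal{A}}_g$. It therefore kills the coefficients of sections $f=\sum_j f_jt^j$ of $\mathbb{F}$ pulled back from $\widehat{\mathcal{A}}_g$.
\item Hence $b_m\cdot f=\langle b_m,f\rangle\,\bm{1}=m f_{-m}\,\bm{1}$, a nonzero central section.
\item Since $\mathscr{O}\bm{1}$ maps injectively into $\mathscr{F}_\Xi$, the kernel in Figure \ref{fig:bigAtiyahXidiag} must be a twisted lift $f\mapsto f+\lambda(f)\bm{1}$, with $\lambda(f)$ varying along the fibres of $\widehat{\mathcal{X}}_g\to\widehat{\mathcal{A}}_g$. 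So Corollary \ref{cor:spoutpreserved}(ii) does not apply to your sheaf.
\end{enumerate}
Worse, your Leibniz formula with $V=\pi_\Gamma$, $H^1$ from the orthonormal basis, and $m>0$ gives
\[
b_m\cdot\left((H^1\otimes f)\bm{1}\right)=m f_{-m}\,\bm{1}.
\]
By the freeness above, this is not a section of $\mathfrak{h}_\Gamma^{\mathrm{out}}(V\otimes_\mathbb{C}\mathscr{O}_{\widehat{\mathcal{X}}_g})$ wherever $f_{-m}\neq 0$. Such $f$ and $m$ always exist: every nonzero $f\in F$ has some $f_{-m}\neq 0$, because $F\cap H'_+=0$.

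So for $V=\pi_\Gamma$, the preservation asserted in (i) fails for the untwisted $\mathfrak{h}_\Gamma\otimes\mathbb{F}$ pulled back from $\widehat{\mathcal{A}}_g$, and no linearity or Leibniz argument can produce it. The missing idea is to twist the outgoing algebra on $\widehat{\mathcal{X}}_g$ by the point of the fibre. Concretely, replace $A\otimes f$ by $A\otimes f+\mu_A(f)$, where $\mu_A(f)$ is a function whose derivatives along the $H'$-directions cancel the scalars $[\sum_i H^i_m,A\otimes f]$. This must be done compatibly with the twisted lift of $\mathbb{F}$ in Figure \ref{fig:bigAtiyahXidiag}.
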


\begin{proof}
The argument is similar to the one for Theorem \ref{thm:houtispreservedmp}.
Recall the sheaf of Lie algebras $\mathfrak{sp}_{\Gamma}^{\mathrm{out}}\left( H' \right)$ from \eqref{eq:spoutGamma}, and consider the semi-direct product $\mathfrak{sp}_{\Gamma}^{\mathrm{out}}\left(H' \right)\ltimes \mathfrak{h}_\Gamma^{\mathrm{out}}$ for the natural action of $\mathfrak{sp}_{\Gamma}^{\mathrm{out}}\left( H' \right)$ on $\mathfrak{h}_\Gamma^{\mathrm{out}}$.

As in \eqref{eq:houtisspout}, one has
\begin{equation}
\label{eq:houtisspouthout}
\mathfrak{h}_\Gamma^{\mathrm{out}} \left( V\otimes_\mathbb{C} \mathscr{O}_{\widehat{\mathcal{X}}_g}\right) = 
\left(\mathfrak{sp}_{\Gamma}^{\mathrm{out}}\left(H' \right)\ltimes \mathfrak{h}_\Gamma^{\mathrm{out}}
\right) \left( V\otimes_\mathbb{C} \mathscr{O}_{\widehat{\mathcal{X}}_g}\right).
\end{equation}
Thus it is enough to show that the subsheaf 
\[
\left(\mathfrak{sp}_{\Gamma}^{\mathrm{out}}\left(H' \right)\ltimes \mathfrak{h}_\Gamma^{\mathrm{out}}
\right) \left( V\otimes_\mathbb{C} \mathscr{O}_{\widehat{\mathcal{X}}_g}\right)
\subset V\otimes_\mathbb{C} \mathscr{O}_{\widehat{\mathcal{X}}_g}
\]
is preserved by the action of $\mathscr{H}_2 \,\widehat{\otimes}_\mathbb{C}\, \mathscr{O}_{\widehat{\mathcal{X}}_g}$ on $V\otimes_\mathbb{C} \mathscr{O}_{\widehat{\mathcal{X}}_g}$.

The case $\mathrm{rank}(\Gamma) = 1$ follows from Corollary \ref{cor:spoutpreserved}(ii).
The case $\mathrm{rank}(\Gamma) > 1$ follows by linearity on $\mathfrak{h}_\Gamma$.
Hence part (i) of the statement.

Since $\mathfrak{sp}^{\mathrm{out}}\left(H' \right)\ltimes \mathbb{F}$ is a sheaf of Lie subalgebras of $\mathfrak{sp}_{\Gamma}^{\mathrm{out}}\left(H' \right)\ltimes \mathfrak{h}_\Gamma^{\mathrm{out}}$, part (ii) follows from \eqref{eq:houtisspouthout}.
\end{proof}

Consequently, we have the following result. Let $d:=\mathrm{rank}(\Gamma)$.

\begin{corollary}
\label{cor:twistedDmodonVhatXg}
For $g\geq 3$, the sheaf $\widehat{\mathbb{V}}(V)$ on $\widehat{\mathcal{X}}_g$ carries an action of the Atiyah algebra $-\frac{d}{2}\,\mathscr{F}_\Xi$, inducing a twisted $\mathscr{D}$-module structure.
\end{corollary}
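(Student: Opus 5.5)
The plan mirrors the proof of Corollary \ref{cor:twistedDmodonVhat}, with the Atiyah algebra diagram of Figure \ref{fig:bigAtiyahXidiag} in place of Figure \ref{fig:bigAtiyahLambdadiag}. The starting point is the exact sequence
\[
\begin{tikzcd}
\mathfrak{sp}^{\mathrm{out}}\left(H' \right)\ltimes \mathbb{F} \arrow[hookrightarrow]{r} &
\mathscr{H}_2 \,\widehat{\otimes}_\mathbb{C}\, \mathscr{O}_{\widehat{\mathcal{X}}_g} \arrow[twoheadrightarrow]{r}
&\mathscr{F}_{\Xi}
\end{tikzcd}
\]
from the back face of Figure \ref{fig:bigAtiyahXidiag}. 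This sequence is available for $g\geq 3$, since the action of $\mathscr{H}_2$ on $\Xi$ by first-order differential operators is only established in that range.

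First, Theorem \ref{thm:houtispreservedU2H}(i) gives an action of $\mathscr{H}_2 \,\widehat{\otimes}_\mathbb{C}\, \mathscr{O}_{\widehat{\mathcal{X}}_g}$ on $\widehat{\mathbb{V}}(V)$. This action is compatible with the Lie algebroid structure: the $\mathscr{O}$-linear part acts $\mathscr{O}$-linearly, and the anchor acts through derivations of $\mathscr{O}_{\widehat{\mathcal{X}}_g}$ as in \eqref{eq:mpOonVO}. Next, Theorem \ref{thm:houtispreservedU2H}(ii) says the kernel $\mathfrak{sp}^{\mathrm{out}}(H')\ltimes\mathbb{F}$ acts trivially. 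Hence the action factors through the quotient Lie algebroid.

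The quotient is an extension of $\mathscr{T}_{\widehat{\mathcal{X}}_g}$ by $\mathscr{O}_{\widehat{\mathcal{X}}_g}$, identified with $\mathscr{F}_\Xi$ up to the rescaling of the central term recorded in the figure. Because $\mathscr{O}$ sits in the sequence with a scalar factor, a representation of $\mathscr{H}_2\,\widehat{\otimes}\,\mathscr{O}$ in which the central element acts by a constant $c$ is exactly a module over $k\,\mathscr{F}_\Xi$. Here $k$ is the scalar by which $1\in\mathscr{O}\subset \mathscr{F}_\Xi$ acts, and $k$ is determined by $c$ and the normalization.

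The main step is pinning down $k$. By Lemma \ref{lemma:mpaction}(i), the central element $\bm{1}\in\mathscr{H}_2$ acts on $V$, and hence on $\widehat{\mathbb{V}}(V)$, by $d$. On the geometric side, $\bm{1}$ acts on $\Xi$ by $-2$; this is the top arrow labelled $-\frac{1}{2}$ in Figure \ref{fig:bigAtiyahXidiag}, which identifies $\bm{1}\otimes 1$ with $-\frac{1}{2}$ times the unit of $\mathscr{F}_\Xi$ after normalizing. Matching the two gives $k=d\cdot(-\frac{1}{2})=-\frac{d}{2}$, so $-\frac{d}{2}\,\mathscr{F}_\Xi$ acts on $\widehat{\mathbb{V}}(V)$.

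This sign and normalization bookkeeping is the point to check most carefully, together with the fact that Theorem \ref{thm:houtispreservedU2H}(i) supplies a genuine Lie algebroid action and not merely a Lie algebra action. The Lie algebroid property is what makes $-\frac{d}{2}\,\mathscr{F}_\Xi$-modules the same as twisted $\mathscr{D}$-modules, with twist $\Xi^{\otimes(-d/2)}$, which gives the twisted $\mathscr{D}$-module structure.
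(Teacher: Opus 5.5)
Your proposal is correct and follows the paper's proof essentially step for step. It uses the exact sequence from Figure~\ref{fig:bigAtiyahXidiag} and Theorem~\ref{thm:houtispreservedU2H}(i)--(ii) to factor the action through some $k\,\mathscr{F}_\Xi$, then uses $\bm{1}$ acting by $d$ to get $k=-\frac{d}{2}$.

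One small normalization slip: since $\bm{1}$ acts on $\Xi$ by $-2$, the image of $\bm{1}\otimes 1$ in $\mathscr{F}_\Xi$ is $-2$ times the unit, not $-\frac{1}{2}$ times it. So the unit acts by $d/(-2)=-\frac{d}{2}$, which is the value you actually compute.
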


\begin{proof}
Recall the exact sequence
\[
\begin{tikzcd}
\mathfrak{sp}^{\mathrm{out}}\left(H' \right) \ltimes \mathbb{F} \arrow[hookrightarrow]{r} &
\mathscr{H}_2 \,\widehat{\otimes}_\mathbb{C}\, \mathscr{O}_{\widehat{\mathcal{X}}_g} \arrow[twoheadrightarrow]{r}
&\mathscr{F}_{\Xi}
\end{tikzcd}
\]
from Figure \ref{fig:bigAtiyahXidiag}.
From Theorem \ref{thm:houtispreservedU2H}(i), one has an action of $\mathscr{H}_2 \,\widehat{\otimes}_\mathbb{C}\, \mathscr{O}_{\widehat{\mathcal{X}}_g}$ on $\widehat{\mathbb{V}}(V)$. From Theorem \ref{thm:houtispreservedU2H}(ii), the induced action of $\mathfrak{sp}^{\mathrm{out}}\left(H' \right) \ltimes \mathbb{F}$ on $\widehat{\mathbb{V}}(V)$ is trivial. Hence the action of $\mathscr{H}_2 \,\widehat{\otimes}_\mathbb{C}\, \mathscr{O}_{\widehat{\mathcal{X}}_g}$
 factors to an action of the Atiyah algebra $k\,\mathscr{F}_\Xi$ on $\widehat{\mathbb{V}}(V)$ for some $k\in\mathbb{C}$.

From \S\ref{sec:mpaction}, the central element $\bm{1}$ in $\mathscr{H}_2$ acts on $V$ as multiplication by $d$. 
It follows from the diagram in Figure \ref{fig:bigAtiyahXidiag} that $k=-\frac{d}{2}$, that is, the Atiyah algebra $-\frac{d}{2}\,\mathscr{F}_\Xi$ acts on $\widehat{\mathbb{V}}(V)$.
\end{proof}

\subsection{Equivariant structure}
In preparation for the descent to ${\mathcal{X}}_g$, we have the following analogue of Theorem \ref{thm:expactionSp+indgroup}. 
The splitting \eqref{eq:sp+splitting} extends to a splitting
\[
\mathfrak{sp}^+\left(H'\right) \ltimes H'_+ \hookrightarrow \mathscr{H}_2.
\]
Thus the action of $\mathscr{H}_2 \,\widehat{\otimes}_\mathbb{C}\, \mathscr{O}_{\widehat{\mathcal{X}}_g}$ on $\widehat{\mathbb{V}}(V)$ from Theorem \ref{thm:houtispreservedU2H} induces an action of $\mathfrak{sp}^+\left(H'\right) \ltimes H'_+$ on $\widehat{\mathbb{V}}(V)$.

Let $\mathbb{O}_1^\times$ be the group scheme of invertible Taylor series with constant term $1$. This is the group ind-scheme obtained by exponentiating $H'_+$, thus $\mathrm{Lie}\left(\mathbb{O}_1^\times\right)=H'_+$.
Also, $\mathbb{O}_1^\times$ is a subgroup scheme of the group scheme $\mathbb{K}_1^\times$ from \S\ref{sec:centralext}.

\begin{theorem}
\label{thm:expactionSp+H'+indgroup}
\begin{enumerate}[(i)]
\item The action of $\mathfrak{sp}^+ \left( H' \right) \ltimes H'_+$ on $\widehat{\mathbb{V}}(V)$ can be exponentiated  to an $(\mathbb{S}\mathrm{p}^+\left(H' \right) \ltimes \mathbb{O}_1^\times)$-equivariant structure on $\widehat{\mathbb{V}}(V)$; 

\smallskip

\item For $x=(Z,F,L,\bar{h},q)$ in $\widehat{\mathcal{X}}_g$, the $(\mathbb{S}\mathrm{p}^+\left(H' \right) \ltimes \mathbb{O}_1^\times)$-equivariant structure on $\widehat{\mathbb{V}}(V)$ induces a trivial action of $\mathbb{S}\mathrm{p}^+_F\left(H' \right)$ on the fiber $\widehat{\mathbb{V}}(x,V)$ of $\widehat{\mathbb{V}}(V)$ at $x$.
\end{enumerate}
\end{theorem}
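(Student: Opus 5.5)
The plan is to follow the local argument of Theorem \ref{thm:expactionSp+} and its global version Theorem \ref{thm:expactionSp+indgroup}, with the abelian factor $H'_+$ added. The group $\mathbb{S}\mathrm{p}^+\left(H' \right)\ltimes \mathbb{O}_1^\times$ is connected: $\mathbb{O}_1^\times$ is pro-unipotent, being the exponential of $H'_+$. So every element is a product of exponentials of elements of $\mathfrak{sp}^+\left(H'\right)\ltimes H'_+$. It therefore suffices to show that the action of each such element on $\widehat{\mathbb{V}}(V)$, coming from Theorem \ref{thm:houtispreservedU2H} through the splitting $\mathfrak{sp}^+\left(H'\right)\ltimes H'_+\hookrightarrow\mathscr{H}_2$, can be exponentiated. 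We then check that the resulting local exponentials patch into an equivariant structure. Compatibility with the group law follows as in the $\mathbb{S}\mathrm{p}^+$ case, because the action is a Lie algebra action and the exponentials are given by locally finite sums.

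For part (i) the key steps are as follows. First, fix a point $x=(Z,F,L,\bar h,q)$ and decompose, as vector spaces,
\[
\mathfrak{sp}^+\left(H'\right)\ltimes H'_+ = \left(\widetilde{S}^2(H'_+)\times (Z/F\,\widehat{\otimes}_{\mathbb{C}}\,H'_+)\times \mathfrak{sp}^+_F\left(H'\right)\right)\ltimes H'_+ .
\]
The $\mathfrak{sp}^+_F$ part acts trivially on the fiber by Theorem \ref{thm:houtispreservedU2H}(ii), and $Z/F$ is finite dimensional. By the Lie product formula it is then enough to exponentiate the generators $b_ib_j$, with $i,j$ not both negative, together with the generators $b_k$, $k\ge1$. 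The elements $b_ib_j$ are treated exactly as in Theorem \ref{thm:expactionSp+}. For $i+j\neq 0$ they act locally nilpotently on $V$ (rank one first, then by linearity on $\mathfrak{h}_\Gamma$), so their exponentials are locally finite sums. For $i+j=0$ they act semisimply with integral eigenvalues and so integrate to a $\mathbb{G}_m$-action. For $b_k$ with $k\geq 1$, Lemma \ref{lemma:mpaction}(i) gives the action $\sum_i H^i_k$. This lowers the degree by $k$, so it is locally nilpotent on $V$, and its exponential is again a finite sum on each vector. Since the action of $H'_+$ on $\widehat{\mathcal{X}}_g$ integrates to the $\mathbb{O}_1^\times$-action (translation along the fibres $H'/K$), the combined operator $b_k\otimes 1+1\otimes b_k$ on $V\otimes\mathscr{O}$ integrates as well. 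Finally, the exponentiated operators preserve $\mathfrak{h}_\Gamma^{\mathrm{out}}(V\otimes\mathscr{O}_{\widehat{\mathcal{X}}_g})$, since they are limits of the Lie algebra action, which preserves it by Theorem \ref{thm:houtispreservedU2H}(i). Hence they descend to $\widehat{\mathbb{V}}(V)$.

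For part (ii), note that $\mathfrak{sp}^+_F\left(H'\right)$ is contained in the fiber of $\mathfrak{sp}^{\mathrm{out}}\left(H'\right)\ltimes\mathbb{F}$ at $x$. By Theorem \ref{thm:houtispreservedU2H}(ii) it therefore acts trivially on $\widehat{\mathbb{V}}(x,V)$. The group $\mathbb{S}\mathrm{p}^+_F\left(H'\right)$ is connected, being the stabilizer conjugate to a fixed one as in Theorem \ref{thm:Sp+action}(iii) and generated by exponentials of its Lie algebra, so it also acts trivially. One should also check that this group fixes $x$ in $\widehat{\mathcal{X}}_g$ and not only its image in $\widehat{\mathcal{A}}_g$. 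This holds because it acts as the identity on $F^\perp/F$ and hence on $H'/K$ and on $\mathscr{Q}$.

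The main obstacle is the step that glues the exponentials into a genuine equivariant structure on the infinite-dimensional base, as opposed to the fiberwise statement. In particular one must ensure that the exponentials of the $b_k$ and of the $\widetilde{S}^2(H'_+)$ directions converge in the topology of $V\otimes\mathscr{O}_{\widehat{\mathcal{X}}_g}$. My first approach would be to use the grading of $V$: every element involved raises the grading by a bounded amount (or lowers it), so on each graded piece only finitely many terms contribute. This reduces the convergence question to the corresponding statement for the group scheme action on $\widehat{\mathcal{X}}_g$, which is given by \S\ref{sec:centralext}.
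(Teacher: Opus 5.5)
\paragraph{Comparison with the paper.} Your part (i) follows the paper's argument. The paper's proof is a single remark: argue as in Theorem~\ref{thm:expactionSp+}, and use in addition that $H'_+$ acts locally nilpotently on $V$. That is what you do.

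\paragraph{The step that fails.} In part (ii) you check that $\mathbb{S}\mathrm{p}^+_F\left(H'\right)$ fixes $x=(Z,F,L,\bar h,q)$, and this check is wrong. Being the identity on $F^\perp/F$ does not make $\rho$ the identity on $H'/K$. For $f\in F$ and $i>0$, the element $fb_i\in\mathfrak{sp}^+_F\left(H'\right)$ acts on $H'$ by $y\mapsto\langle b_i,y\rangle f+\langle f,y\rangle b_i$. Hence for $j>0$ it sends $b_j$ to $\langle f,b_j\rangle b_i$, which is nonzero as soon as $f$ has a nonzero $t^{-j}$-coefficient. Since $H'_+$ injects into $H'/K$ by \eqref{eq:H_+extX}, the group moves points of $H'/K$.

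What is true is weaker. Each $\rho\in\mathbb{S}\mathrm{p}^+_F\left(H'\right)$ acts trivially on $X=H'/(K+H'_+)$, so $\rho(\bar h)-\bar h$ lies in the image of $H'_+$. Take the evident action on the fiber coordinate: linear on $H'/K$, with $\mathbb{O}_1^\times$ acting by translations. Then $\mathbb{S}\mathrm{p}^+_F\left(H'\right)$ fixes $x$ only up to an $\mathbb{O}_1^\times$-translation. It fixes $x$ outright only when $\bar h$ comes from $F^\perp$.

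So you cannot get from $F^\perp/F$ the fact that $\mathfrak{sp}^+_F\left(H'\right)$ lies in the isotropy at $x$ and acts on the fiber there. In the paper this comes from the description of the isotropy algebra at $x$ as $\mathfrak{sp}_F\left(H'\right)\ltimes F$ (Theorem~\ref{thm:spspHunif}(ii) and Figure~\ref{fig:bigAtiyahXidiag}). Theorem~\ref{thm:houtispreservedU2H}(ii) then gives the triviality. Invoke that input instead of the incorrect check.

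\paragraph{Two smaller points.}
\begin{enumerate}[(a)]
\item Conjugacy of the stabilizers (Theorem~\ref{thm:Sp+action}(iii)) says nothing about whether $\mathbb{S}\mathrm{p}^+_F\left(H'\right)$ is connected. The Lie algebra argument only controls the subgroup generated by exponentials of $\mathfrak{sp}^+_F\left(H'\right)$. The paper makes the same passage without comment.
\item The grading heuristic in your last paragraph does not give convergence in $V$. The element $L_{-1}=-\partial_t$ lies in $\mathfrak{sp}^+\left(H'\right)$ and raises degree by exactly one. In rank one it satisfies $L_{-1}^nH^1_{-1}\bm{1}=n!\,H^1_{-n-1}\bm{1}$, so $\exp(sL_{-1})H^1_{-1}\bm{1}$ is an infinite sum that does not lie in $V$. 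What makes the exponentials exist is the decomposition at the point, together with local nilpotence or local finiteness of the remaining generators, as in your second paragraph.
\end{enumerate}
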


\begin{proof}
The statement follows similarly to Theorem \ref{thm:expactionSp+} together with the fact that elements of $H'_+$ act locally nilpotently on $V$. 
\end{proof}

\subsection{Descent to ${\mathcal{X}}_g$}
Next, we descend the sheaf of coinvariants to ${\mathcal{X}}_g$:

\begin{corollary}
\label{cor:VonXg}
The sheaf $\widehat{\mathbb{V}}(V)$ on $\widehat{\mathcal{X}}_g$ descends to a sheaf ${\mathbb{V}}(V)$ on ${\mathcal{X}}_g$.
\end{corollary}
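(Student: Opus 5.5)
The plan is to mirror the proof of Corollary \ref{cor:VonAg}, replacing $\mathbb{S}\mathrm{p}^+\left(H'\right)$ by the group scheme $\mathbb{S}\mathrm{p}^+\left(H'\right)\ltimes\mathbb{O}_1^\times$ and $\widehat{\mathcal{A}}_g\rightarrow\mathcal{A}_g$ by $\widehat{\mathcal{X}}_g\rightarrow\mathcal{X}_g$. The first step is to check that $\mathbb{S}\mathrm{p}^+\left(H'\right)\ltimes\mathbb{O}_1^\times$ acts transitively on the fibers of $\widehat{\mathcal{X}}_g\rightarrow\mathcal{X}_g$. Transitivity over the base is Theorem \ref{thm:Sp+action}, applied to the fibers of $\widehat{\mathcal{A}}_g\rightarrow\mathcal{A}_g$. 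Over a fixed extended PPAV, points of $\widehat{\mathcal{X}}_g$ lying over a fixed point of $X$ differ by elements of $H'_+$ in the extension \eqref{eq:H_+extX}, since $H'_+$ is exactly the kernel of $H'/K\rightarrow X$. These translations are realized by the action of $\mathbb{O}_1^\times=\exp(H'_+)$. The stabilizer of $x=(Z,F,L,\bar h,q)$ should then be the subgroup of pairs $(\rho,u)$ with $\rho\in\mathbb{S}\mathrm{p}^+_F\left(H'\right)$ and $u$ in the part of $\mathbb{O}_1^\times$ fixing $\bar h$. Since $H'_+\cap K\subseteq H'_+\cap Z=0$ by \eqref{eq:cond1}, that part is trivial, so the stabilizer is essentially $\mathbb{S}\mathrm{p}^+_F\left(H'\right)$. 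This matches the formulation of Theorem \ref{thm:expactionSp+H'+indgroup}(ii).

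The descent itself then proceeds in two steps. By Theorem \ref{thm:expactionSp+H'+indgroup}(i), $\widehat{\mathbb{V}}(V)$ is $(\mathbb{S}\mathrm{p}^+\left(H'\right)\ltimes\mathbb{O}_1^\times)$-equivariant, so it descends to a sheaf on the quotient stack $[\widehat{\mathcal{X}}_g/(\mathbb{S}\mathrm{p}^+\left(H'\right)\ltimes\mathbb{O}_1^\times)]$. By Theorem \ref{thm:expactionSp+H'+indgroup}(ii), the stabilizers of points act trivially on the fibers. We can then apply \cite[Thm 10.3]{alper2013good} to the map from this quotient stack to $\mathcal{X}_g$, obtaining the sheaf $\mathbb{V}(V)$ on $\mathcal{X}_g$.

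The main obstacle is the stabilizer computation, in particular confirming that no nontrivial part of $\mathbb{O}_1^\times$, nor any mixed element $(\rho,u)$ with $\rho\notin\mathbb{S}\mathrm{p}^+_F\left(H'\right)$, fixes $x$. I would handle this by noting that the image of a stabilizing pair in $\mathbb{S}\mathrm{p}^+\left(H'\right)$ must fix $(Z,F,L)$, so $\rho\in\mathbb{S}\mathrm{p}^+_F\left(H'\right)$. Because $\rho$ acts as the identity on $F^\perp/F$ and preserves $K$, it should fix $\bar h$ modulo $K$ and fix $q$. Then $u$ must fix $\bar h$, which forces $u=1$ by the vanishing $H'_+\cap K=0$. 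If elements of type $(\rho,u)$ with $u\neq1$ compensating a nontrivial shift by $\rho$ do arise, I would instead invoke the fact that $H'_+$ acts locally nilpotently on $V$, used in the proof of Theorem \ref{thm:expactionSp+H'+indgroup}. Together with Theorem \ref{thm:houtispreservedU2H}(ii), applied to the $\mathbb{F}$-component, this should show that such elements still act trivially on the fiber $\widehat{\mathbb{V}}(x,V)$. That is all the Alper descent criterion requires.
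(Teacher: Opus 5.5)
Your overall strategy is the paper's. Its proof only says that the argument of Corollary \ref{cor:VonAg} goes through using Theorem \ref{thm:expactionSp+H'+indgroup}: equivariance gives descent to the quotient stack, and trivial action of stabilizers on fibers gives the further descent via \cite[Thm 10.3]{alper2013good}. The problem is the stabilizer analysis you add, which is the very step you single out as the crux. It has three defects.

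First, $K\not\subseteq Z$. Otherwise $L\subseteq Z/F$, where $\langle\,,\rangle$ vanishes because $Z$ is Lagrangian, contradicting \eqref{eq:cond3}. The vanishing $K\cap H'_+=0$ is true, but it comes from \eqref{eq:cond2} together with \eqref{eq:cond1}, i.e.\ from the injectivity of $H'_+\rightarrow H'/K$ in \eqref{eq:H_+extX}.

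Second, ``$\rho$ is the identity on $F^\perp/F$ and preserves $K$'' does not imply that $\rho$ fixes $\bar h$. The point $\bar h$ lives in $H'/K$ and need not lie in $F^\perp/K$. For instance, under the linear action the generator $fh$ ($f\in F$, $h\in H'_+$) moves a lift $\tilde h$ by $\langle h,\tilde h\rangle f+\langle f,\tilde h\rangle h\equiv\langle f,\tilde h\rangle h \pmod K$. This is nonzero in $H'/K$ whenever $\langle f,\tilde h\rangle\neq 0$.

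Third, your fallback for pairs $(\rho,u)$ with $u\neq 1$ does not work. Local nilpotence of $H'_+$ on $V$ only makes the action exponentiable; it does not make it trivial. Theorem \ref{thm:houtispreservedU2H}(ii) only kills the $\mathbb{F}$-directions, which meet $H'_+$ trivially. So neither fact says anything about how $u$ acts on $\widehat{\mathbb{V}}(x,V)$.

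The correct way to pin down the stabilizer within the paper's framework is Theorem \ref{thm:spspHunif}(ii), i.e.\ the kernel in Figure \ref{fig:bigAtiyahXidiag}. It says the isotropy algebra at $x$ is $\mathfrak{sp}_F(H')\ltimes F$. Its intersection with $\mathrm{Lie}(\mathbb{S}\mathrm{p}^+(H')\ltimes\mathbb{O}_1^\times)=\mathfrak{sp}^+(H')\ltimes H'_+$ is $\mathfrak{sp}^+_F(H')\ltimes(F\cap H'_+)=\mathfrak{sp}^+_F(H')$, because $F\cap H'_+\subseteq Z\cap H'_+=0$ by \eqref{eq:cond1}. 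This is where \eqref{eq:cond1} genuinely enters, with $F$ rather than $K$.

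Consequently no compensating $\mathbb{O}_1^\times$-components occur, and the stabilizer of $x$ is infinitesimally $\mathbb{S}\mathrm{p}^+_F(H')$. That subgroup fixes $x$ because of how the action is set up, not because it is the identity on $F^\perp/F$. Theorem \ref{thm:expactionSp+H'+indgroup}(ii) is then exactly the input \cite[Thm 10.3]{alper2013good} requires. This identification of the stabilizer is what the paper uses tacitly. If you replace your third paragraph with it, your proof coincides with the paper's.
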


\begin{proof}
The statement follows similarly to Corollary \ref{cor:VonAg} by making use of Theorem \ref{thm:expactionSp+H'+indgroup}.
\end{proof}

\begin{corollary}
\label{cor:twistedDmodonVXg}
For $g\geq 3$, the sheaf ${\mathbb{V}}(V)$ on ${\mathcal{X}}_g$ carries an action of the Atiyah algebra $-\frac{d}{2}\,\mathscr{F}_\Xi$, inducing a twisted $\mathscr{D}$-module structure.
\end{corollary}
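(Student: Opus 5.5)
The argument follows Corollary \ref{cor:twistedDmodonV}, with $\widehat{\mathcal{A}}_g$, $\mathfrak{mp}(H')$ and $\Lambda$ replaced by $\widehat{\mathcal{X}}_g$, $\mathscr{H}_2$ and $\Xi$. By Corollary \ref{cor:twistedDmodonVhatXg}, the action of $\mathscr{H}_2\,\widehat{\otimes}_\mathbb{C}\,\mathscr{O}_{\widehat{\mathcal{X}}_g}$ on $\widehat{\mathbb{V}}(V)$ from Theorem \ref{thm:houtispreservedU2H} factors through the quotient by $\mathfrak{sp}^{\mathrm{out}}(H')\ltimes\mathbb{F}$. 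This gives an action of $-\frac{d}{2}\,\mathscr{F}_\Xi$ on $\widehat{\mathbb{V}}(V)$ over $\widehat{\mathcal{X}}_g$. Here $\mathscr{F}_\Xi$ is the Atiyah algebra of the pullback of $\Xi$, so it is itself pulled back from $\mathcal{X}_g$. The task is therefore to descend both the sheaf and this action along $\widehat{\mathcal{X}}_g\to[\widehat{\mathcal{X}}_g/(\mathbb{S}\mathrm{p}^+(H')\ltimes\mathbb{O}_1^\times)]\to\mathcal{X}_g$.

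First, compatibility. Theorem \ref{thm:expactionSp+H'+indgroup}(i) says the $(\mathbb{S}\mathrm{p}^+(H')\ltimes\mathbb{O}_1^\times)$-equivariant structure on $\widehat{\mathbb{V}}(V)$ is obtained by exponentiating the restriction of the $\mathscr{H}_2$-action along the splitting $\mathfrak{sp}^+(H')\ltimes H'_+\hookrightarrow\mathscr{H}_2$. Since $\mathfrak{sp}^+(H')\ltimes H'_+$ is a Lie subalgebra of $\mathscr{H}_2$, the adjoint action of the group on $\mathscr{H}_2\,\widehat{\otimes}\,\mathscr{O}_{\widehat{\mathcal{X}}_g}$ intertwines the two actions on $\widehat{\mathbb{V}}(V)$. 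By Corollary \ref{cor:spoutpreserved}(ii), this adjoint action preserves the kernel $\mathfrak{sp}^{\mathrm{out}}(H')\ltimes\mathbb{F}$, so it descends to $\mathscr{F}_\Xi$. It is the natural equivariant structure there, because the group acts on $\widehat{\mathcal{X}}_g$ over $\mathcal{X}_g$ and $\Xi$ is pulled back. Hence the $-\frac{d}{2}\mathscr{F}_\Xi$-action is equivariant and descends to the quotient stack.

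Second, stabilizers. At $x=(Z,F,L,\bar h,q)$, Theorem \ref{thm:expactionSp+H'+indgroup}(ii) says the stabilizer acts trivially on the fiber $\widehat{\mathbb{V}}(x,V)$. It also acts trivially on the fiber of $\mathscr{F}_\Xi$, since this Atiyah algebra is pulled back from $\mathcal{X}_g$. With \cite[Thm 10.3]{alper2013good}, as in Corollary \ref{cor:VonXg}, the sheaf and the action both descend to $\mathbb{V}(V)$ on $\mathcal{X}_g$. Finally, $\mathscr{O}$-linearity and the Leibniz rule are local properties, so they pass through descent, and the central normalization $-\frac{d}{2}$ is unchanged. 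This yields the twisted $\mathscr{D}$-module structure.

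The main obstacle is the stabilizer. On $\widehat{\mathcal{X}}_g$ the isotropy contains, besides $\mathbb{S}\mathrm{p}^+_F(H')$, elements of $\mathbb{O}_1^\times$ coming from $F\cap H'_+$, which is trivial by \eqref{eq:cond1}. One must check that the full stabilizer in $\mathbb{S}\mathrm{p}^+(H')\ltimes\mathbb{O}_1^\times$ is $\mathbb{S}\mathrm{p}^+_F(H')$, possibly twisted by a compatible translation part. Any $H'_+$-component should lie in $\mathfrak{sp}^{\mathrm{out}}(H')\ltimes\mathbb{F}$ modulo $\mathfrak{sp}^+_F$, so it acts trivially by Theorem \ref{thm:houtispreservedU2H}(ii). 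I would first verify this at the Lie algebra level and then exponentiate, using local nilpotence of $H'_+$ on $V$.
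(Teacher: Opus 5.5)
Your proposal is correct and follows essentially the same route as the paper, which proves this exactly as in Corollary \ref{cor:twistedDmodonV}. There, the $-\frac{d}{2}\,\mathscr{F}_\Xi$-action on $\widehat{\mathbb{V}}(V)$ from Corollary \ref{cor:twistedDmodonVhatXg} is compatible with the exponentiated $(\mathbb{S}\mathrm{p}^+(H')\ltimes\mathbb{O}_1^\times)$-equivariant structure of Theorem \ref{thm:expactionSp+H'+indgroup}; it descends to the quotient stack, and then to $\mathcal{X}_g$ because stabilizers act trivially on both the sheaf and the Atiyah algebra. The paper leaves implicit whether the full stabilizer of $x$ in $\mathbb{S}\mathrm{p}^+(H')\ltimes\mathbb{O}_1^\times$ is exactly $\mathbb{S}\mathrm{p}^+_F(H')$, and your plan to settle this at the Lie algebra level via Theorem \ref{thm:houtispreservedU2H}(ii) is a reasonable way to make it explicit.
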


\begin{proof}
The statement follows similarly to Corollary \ref{cor:twistedDmodonV} by making use of Corollary \ref{cor:twistedDmodonVhatXg} and Theorem \ref{thm:expactionSp+H'+indgroup}.
\end{proof}

\subsection{The universal Jacobian}
Finally, we show the compatibility of the above construction with the spaces of coinvariants on the universal Jacobian.

Let $\mathcal{J}_g$ be the universal Jacobian of genus $g$. This is the restriction of $\mathcal{X}_g\rightarrow \mathcal{A}_g$ over $\mathcal{M}_g\hookrightarrow \mathcal{A}_g$. Similarly, let $\widehat{\mathcal{J}}_g$ be the universal extended Jacobian of genus $g$, which is the restriction of $\widehat{\mathcal{X}}_g\rightarrow \widehat{\mathcal{A}}_g$ over $\widehat{\mathcal{M}}_g\hookrightarrow \widehat{\mathcal{A}}_g$. One has a commutative diagram
\[
\begin{tikzcd}
&\widehat{\mathcal{X}}_g \arrow{dd} \arrow{rr} && \widehat{\mathcal{A}}_g \arrow{dd}\\
\widehat{\mathcal{J}}_g \arrow[hookrightarrow]{ru} \arrow{dd} \arrow[crossing over]{rr} && \widehat{\mathcal{M}}_g \arrow[hookrightarrow]{ru}\\
&\mathcal{X}_g \arrow{rr} && \mathcal{A}_g\\
\mathcal{J}_g \arrow[hookrightarrow]{ru} \arrow{rr} && \mathcal{M}_g \arrow[leftarrow, crossing over]{uu} \arrow[hookrightarrow]{ru}
\end{tikzcd}
\]
with Cartesian horizontal squares. 

Let $\mathcal{M}_{g,1}$ be the moduli space of pointed curves $(C,P)$, where $C$ is a curve of genus $g$ and $P\in C$.
The forgetful map $\widehat{\mathcal{M}}_g \rightarrow \mathcal{M}_{g,1}$ is a principal $\mathbb{A}\mathrm{ut}(\mathbb{O})$-bundle (see \S\ref{sec:AutC}). Similarly, let $\mathcal{J}_{g,1}$ be the pull-back of $\mathcal{J}_g$ via $\mathcal{M}_{g,1}\rightarrow \mathcal{M}_g$. 
Then the forgetful map $\widehat{\mathcal{J}}_g \rightarrow \mathcal{J}_{g,1}$ is a principal $(\mathbb{A}\mathrm{ut}(\mathbb{O})\ltimes \mathbb{O}_1^\times)$-bundle.

The sheaf of coinvariants on $\mathcal{J}_g$ as constructed in \cite[\S\S9.5.5, 18.1.15]{bzf} can be obtained as follows: one first 
considers the pull-back 
\[
\widehat{\mathbb{V}}_{\widehat{\mathcal{J}}_g}(V)
\]
on $\widehat{\mathcal{J}}_g$ of the sheaf of coinvariants on $\widehat{\mathcal{M}}_g$. Then, one descends this sheaf along the principal $(\mathbb{A}\mathrm{ut}(\mathbb{O})\ltimes \mathbb{O}_1^\times)$-bundle $\widehat{\mathcal{J}}_g \rightarrow \mathcal{J}_{g,1}$. Finally, one shows that the resulting sheaf is constant over the fibers of $\mathcal{J}_{g,1}\rightarrow \mathcal{J}_g$, thus the sheaf further descends to a sheaf 
$\mathbb{V}_{\mathcal{J}_g}(V)$ on $\mathcal{J}_g$.

\begin{theorem}
\label{thm:VonJg}
The sheaf of coinvariants $\mathbb{V}(V)$ on $\mathcal{X}_g$ restricts to the sheaf $\mathbb{V}_{\mathcal{J}_g}(V)$ over the universal Jacobian ${\mathcal{J}}_g$.
\end{theorem}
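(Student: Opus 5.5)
The plan mirrors the proof of Theorem \ref{thm:VonTorelli}, now carried out in families over $\widehat{\mathcal{J}}_g\hookrightarrow\widehat{\mathcal{X}}_g$. First I would identify the sheaves upstairs. By definition, $\widehat{\mathbb{V}}(V)$ on $\widehat{\mathcal{X}}_g$ is the pull-back of $\widehat{\mathbb{V}}(V)$ on $\widehat{\mathcal{A}}_g$. Likewise, $\widehat{\mathbb{V}}_{\widehat{\mathcal{J}}_g}(V)$ is the pull-back of the sheaf of coinvariants on $\widehat{\mathcal{M}}_g$. The horizontal squares of the diagram above are Cartesian. So it suffices to check that $\widehat{\mathbb{V}}(V)$ on $\widehat{\mathcal{A}}_g$ restricts to the sheaf of coinvariants on $\widehat{\mathcal{M}}_g$. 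This follows fiberwise from Lemma \ref{lemma:VhatonTorelli}. Globally it follows from Lemma \ref{lemma:outonTorellilocus} applied to the tautological sheaf $\mathbb{F}$: its restriction to $\widehat{\mathcal{M}}_g$ is the sheaf with fibers $\mathscr{O}(C\setminus P)$, by \eqref{eq:FonTorellilocus}. Hence $\mathfrak{h}_\Gamma^{\mathrm{out}}$ restricts to the curve Lie algebra of outgoing states, and quotients commute with restriction because both are cokernels of the same map, and pull-back is right exact.

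Second, I would compare the equivariant structures. On $\widehat{\mathcal{J}}_g$ the descent group is $\mathbb{A}\mathrm{ut}(\mathbb{O})\ltimes\mathbb{O}_1^\times$. On $\widehat{\mathcal{X}}_g$ it is $\mathbb{S}\mathrm{p}^+(H')\ltimes\mathbb{O}_1^\times$ from Theorem \ref{thm:expactionSp+H'+indgroup}. The first group embeds in the second, with $\mathbb{A}\mathrm{ut}(\mathbb{O})\subset\mathbb{S}\mathrm{p}^+(H')$. Both structures are obtained by exponentiating Lie algebra actions on $V$. At the Lie algebra level, $\mathrm{Der}_0(\mathbb{O})\subset\mathfrak{sp}^+(H')\hookrightarrow\mathfrak{mp}(H')$, and by Lemma \ref{lemma:mpaction}(iii),(iv) the restricted action is the Virasoro action of the standard conformal vector. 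The $H'_+$ actions coincide because both are given by $\sum_i H^i_k$. Hence the $\mathbb{S}\mathrm{p}^+(H')\ltimes\mathbb{O}_1^\times$-equivariant structure restricts to the $\mathbb{A}\mathrm{ut}(\mathbb{O})\ltimes\mathbb{O}_1^\times$-equivariant structure of \cite[\S 9.5.5, 18.1.15]{bzf}. The exponentials agree because both groups are connected, or pro-unipotent times $\mathbb{G}_m$, so an equivariant structure is determined by its differential.

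Third, I would compare the descents. Descending $\widehat{\mathbb{V}}_{\widehat{\mathcal{J}}_g}(V)$ along $\widehat{\mathcal{J}}_g\to\mathcal{J}_{g,1}$ and then along $\mathcal{J}_{g,1}\to\mathcal{J}_g$ gives $\mathbb{V}_{\mathcal{J}_g}(V)$. The map $\widehat{\mathcal{J}}_g\to\mathcal{J}_g$ factors through the restriction of $\widehat{\mathcal{X}}_g\to\mathcal{X}_g$, and the orbits of the smaller group lie inside orbits of the larger group. So the descended sheaf $\mathbb{V}(V)|_{\mathcal{J}_g}$ pulls back to $\mathcal{J}_{g,1}$ as the descent of $\widehat{\mathbb{V}}_{\widehat{\mathcal{J}}_g}(V)$, exactly as in Figure \ref{fig:2descents}. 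The identification on fibers of $\mathcal{J}_{g,1}\to\mathcal{J}_g$, which is the propagation of vacua in \cite{bzf}, must then agree with the identification coming from the $\mathbb{S}\mathrm{p}^+(H')$-equivariance, which moves the point $P$ within $\mathrm{Sp}(X,\Theta)$.

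The main obstacle is this last compatibility. We need the canonical isomorphisms $\mathbb{V}(C,P,V)\cong\mathbb{V}(C,P',V)$ from \cite[Thm 10.3.1]{bzf} to coincide with the ones induced by the $\mathbb{S}\mathrm{p}^+(H')$-action, which are unique because stabilizers act trivially by Theorem \ref{thm:expactionSp+H'+indgroup}(ii). I would first argue by uniqueness: both identifications are flat with respect to the action of the Lie algebra of tangent vectors along $C$, given by $L_{-1}$ inside $\mathfrak{mp}(H')$. The actions agree by the second step, and a flat trivialization over the connected curve $C$ is unique up to the fiber identification at a single point. Both sides restrict to the identity at that point, so they coincide.
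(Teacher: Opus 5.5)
Your proposal is correct and follows the same route as the paper. You identify $\widehat{\mathbb{V}}(V)|_{\widehat{\mathcal{J}}_g}$ with $\widehat{\mathbb{V}}_{\widehat{\mathcal{J}}_g}(V)$, check that the $(\mathbb{S}\mathrm{p}^+(H')\ltimes\mathbb{O}_1^\times)$-equivariant structure extends the $(\mathbb{A}\mathrm{ut}(\mathbb{O})\ltimes\mathbb{O}_1^\times)$-equivariant structure together with independence of the point, and conclude that the descents are compatible; your extra details (the comparison via Lemma \ref{lemma:mpaction}(iii),(iv) and the flatness/uniqueness argument matching propagation of vacua with the $\mathbb{S}\mathrm{p}^+(H')$-identifications) make explicit what the paper asserts in one line.
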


\begin{proof}
By definition, the sheaf $\widehat{\mathbb{V}}(V)$ on $\widehat{\mathcal{X}}_g$ restricts to $\widehat{\mathbb{V}}_{\widehat{\mathcal{J}}_g}(V)$ over $\widehat{\mathcal{J}}_g$. 

Moreover, the $(\mathbb{S}\mathrm{p}^+\left(H' \right) \ltimes \mathbb{O}_1^\times)$-equivariant structure on $\widehat{\mathbb{V}}(V)\rightarrow \widehat{\mathcal{X}}_g$ extends the combination of the $(\mathbb{A}\mathrm{ut}(\mathbb{O}) \ltimes \mathbb{O}_1^\times)$-equivariant structure on $\widehat{\mathbb{V}}_{\widehat{\mathcal{J}}_g}(V)$ with the invariance  with respect to the choice of points on the curves.

This implies that the descent of $\widehat{\mathbb{V}}(V)$ along $\widehat{\mathcal{X}}_g\rightarrow \mathcal{X}_g$ extends the descent of $\widehat{\mathbb{V}}_{\widehat{\mathcal{J}}_g}(V)$ along $\widehat{\mathcal{J}}_g\rightarrow \mathcal{J}_g$. Hence the statement.
\end{proof}

We conclude with:

\begin{proof}[Proof of Theorem \ref{thm:mainintroXg}]
Part (i) follows from Theorem \ref{thm:VonJg} and part (ii) from Corollary \ref{cor:twistedDmodonVXg}.
\end{proof}


\section*{Acknowledgments} 
The author acknowledges partial support from the NSF award DMS-2404896 and a Simons Foundation's Travel Support for Mathematicians gift. 
This work is indebted to \cite{bzf} for the treatment of coinvariants on curves and to \cite{adc91} for extended PPAVs.
The author thanks Shashank Kanade and Filippo Viviani for helpful conversations on related topics.


\bibliographystyle{alphanumN}
\bibliography{Biblio}

\end{document}